\newtheorem{theorem}{Theorem}[section]
\newtheorem{lemma}[theorem]{Lemma}
\newtheorem{proposition}[theorem]{Proposition}
\newtheorem{corollary}[theorem]{Corollary}
\theoremstyle{definition}
\newtheorem{definition}[theorem]{Definition}
\newtheorem{remark}[theorem]{Remark}
\newtheorem{example}[theorem]{Example}
\newcommand{\N}{\mathbb{N}}
\title{On Shapiro's Lethargy Theorem and some applications}
\author{ A. G. Aksoy, J. M. Almira}
\begin{document}

\keywords{Approximation scheme, approximation error,  approximation with restrictions, Bernstein's Lethargy Theorem, Shapiro's Theorem, Metric vector space, $F$-space}
\subjclass[2000]{41A29, 41A25, 41A65, 41A27}

\baselineskip=16pt

\numberwithin{equation}{section}

\maketitle \markboth{On Shapiro's Lethargy Theorem}{A. G. Aksoy, J. M. Almira}

\begin{abstract}
Shapiro's lethargy theorem \cite{shapiro} states that if $\{A_n\}$ is any non-trivial linear approximation scheme on a Banach space $X$, then the sequences of errors of best 
approximation $E(x,A_n) = \inf_{a \in A_n} \|x - a_n\|_X$ decay almost arbitrarily slowly. 
Recently, Almira and Oikhberg \cite{almira_oikhberg}, \cite{almira_oikhberg_2} investigated this kind of result for general approximation schemes in the quasi-Banach setting. 
In this paper, we consider the same question for $F$-spaces with non decreasing
metric $d$. We also provide  applications to the rate of decay of s-numbers,
entropy numbers, and slow convergence  of sequences of operators.
\end{abstract}

\section{Motivation}
A famous theorem by Kakutani \cite{kakutani} states that a topological vector space is metrizable if and only if it contains a countable basis of neighborhoods. Furthermore, if the topological vector space $X$ admits a compatible metric $d$ then it also admits an equivalent metric $d^*$ which is translation invariant. Thus, we assume in all what follows that our metrics are translation invariant. Recall that a metric vector space $(X,d)$ is named an $F$-space if and only if it is complete. There are two large categories in these spaces: the locally bounded and the locally convex ones. An important result by Aoki \cite{aokiresult} and Rolewicz \cite{rolewiczresult} guarantees that every locally bounded metric vector space admits a compatible $p$-norm, so that the class of locally bounded $F$-spaces coincides with the class of quasi-Banach spaces. On the other hand, it is also well known that normable metric spaces are precisely the metric vector spaces which are  locally bounded and locally convex (this result was proved by Kolmogorov in 1935 \cite{kolmorogov}).  In particular, an $F$-space is Banach if and only if it is locally bounded and locally convex.

Given $(X,d)$  an $F$-space, and
$A_0\subset A_1\subset\ldots \subset A_n\subset\ldots \subset X$
an infinite chain of subsets of $X$, where all inclusions are
strict,  we say that $(X,\{A_n\})$ is an {\it approximation scheme} (or that $(A_n)$ is an approximation scheme in $X$) if:
\begin{itemize}
\item[$(A1)$] There exists a map $K:\mathbb{N}\to\mathbb{N}$ such that $K(n)\geq n$ and $A_n+A_n\subseteq A_{K(n)}$ for all $n\in\mathbb{N}$,

\item[$(A2)$] $\lambda A_n\subset A_n$ for all $n\in\mathbb{N}$ and all scalars $\lambda$,

\item[$(A3)$] $\bigcup_{n\in\mathbb{N}}A_n$ is a dense subset of $X$.
\end{itemize}

Approximation schemes were introduced in Banach space theory by Butzer and Scherer in 1968 \cite{butzer_scherer} and, independently,  by Y. Brudnyi and N. Kruglyak under the name of ``approximation families'' in 1978 \cite{brukru}. They were
popularized by Pietsch in his seminal paper of 1981 \cite{Pie}, which studied the approximation spaces
$A_p^r(X,{A_n})=\{x\in X: \|x\|_{A_p^r}=\|\{E(x,A_n)\}_{n=0}^\infty\|_{\ell_{p,r}}<\infty\}$. Here, $$\ell_{p,r}=\{\{a_n\}\in\ell_\infty: \|\{a_n\}\|_{p,r}=\left[\sum_{n=1}^\infty n^{rp-1}(a_n^*)^p\right]^\frac{1}{p}<\infty\}$$ denotes the so called Lorentz sequence space, $(X,\|\cdot\|_X)$ is a quasi-Banach space and $E(x,A_n)=\inf_{a\in A_n}\|x-a\|_X$.

A fundamental part of the  theory developed by the authors of the above mentioned papers consists of the study of the embeddings between the involved spaces. In particular, Pietsch proved that the embedding $A_p^r(X,{A_n})\hookrightarrow A_q^s(X,{A_n})$ holds true whenever $r>s>0$ or $r=s$ and $p<q$. This, in conjunction with the central theorems in approximation theory, which state a strong relation between smoothness of functions $f$ (compactness of operators $T$, respectively) and  fast decay of approximation errors $E(f,A_n)$ (approximation numbers $a_n(T)$, respectively), has been used to speak about the scale of smoothness (compactness, respectively) defined by an approximation scheme $(X,\{A_n\})$. Concretely, it is assumed (see, for example,  \cite{almirabjma}, \cite{devorenonlinear}, \cite{devore}, \cite{PieID}) that membership to the approximation space $A_p^r(X,\{A_n\})$ is a concept of smoothness (compactness if $X=B(Y_1,Y_2)$ and $A_n=\{T\in B(Y_1,Y_2): \textbf{rank} (T)<n\}$).
Approximation schemes are, thus,
a natural subject of study in Approximation Theory.
Indeed, the approximation scheme concept is the abstract tool that models
all approximation processes, and can be considered as
a central concept for the theory.

Another main motivation for Pietsch's contribution \cite{Pie} was the existence of a strong  parallelism between the theories of approximation spaces and interpolation spaces. In particular, he proved embedding, reiteration and representation results for his approximation spaces. Simultaneously and also independently, Ti\c{t}a \cite{tita0} studied, from 1971 on, for the case of approximation of linear operators by finite rank operators,  a similar concept, based on the use of symmetric norming functions $\Phi$ 
and the sequence spaces defined by them, $S_{\Phi}=\{\{a_n\}:\exists \lim_{n\to\infty}\Phi(a_1^*,a_2^*,\cdots,a_n^*,0,0,\cdots)\}$.
The concept of approximation scheme given in the present paper generalizes to the $F$-spaces setting  a definition which was introduced by Almira and Luther \cite{almiraluther2}, \cite{almiraluther1} some time  ago. They also created a  theory for generalized approximation spaces via the use of general sequence spaces $S$ (that they named ``admissible sequence spaces'') and the definition of the approximation spaces $A(X,S,\{A_n\})=\{x\in X: \|x\|_{A(X,S)}=\|\{E(x,A_n)\}\|_S<\infty\}$. Other papers with a similar spirit of generality have been written by Aksoy \cite{Ak0,Ak1, Ak2, AkNa}, Ti\c{t}a \cite{tita_cluj_99} and Pustylnik \cite{pustylnik}, \cite{pustylnik2}. Finally, a few other important references for people interested on approximation spaces are \cite{cobos}, \cite{cobos_milman}, \cite{cobos_resina}, \cite{feher_g}, \cite{peetre_sparr}, \cite{tita_anal}, \cite{tita_col, tita_col2} and \cite{tita_studia}. It is important to remark that, due to the centrality of the concept of approximation scheme in approximation theory,  the idea of defining approximation spaces is a quite natural one. Unfortunately, this has had the negative effect that many unrelated people has thought on the same things at different places and different times, and some papers in this subject partially overlap.

In this paper, we study the behavior of best approximation errors
of an element $x \in X$ relative to an approximation scheme when $(X,d)$ is an $F$-space.

To proceed further, we establish our notation. We write $\{\varepsilon_i\} \searrow 0$
to indicate the sequence $\varepsilon_1 \geq \varepsilon_2 \geq \ldots \geq 0$
satisfies $\lim_i \varepsilon_i = 0$. For an $F$-space $(X,d)$, we denote by $B_d(x,r)$
and $S_d(x,r)$ the closed ball and the sphere of center  $x\in X$ and radius $r>0$, respectively. That is,
$S_d(x,r) = \{y \in X : d(x,y) = r\}$, and $B_d(x,r) = \{y \in X : d(x,y) \leq r\}$.  We use the notation $B(x,r)$ and $S(x,r)$ if there is no possibility of confusion with respect to the metric $d$ we are dealing with.
If  $x \in X$, and $A \subset X$, we define
the {\it best approximation error} of $x$ with respect to $A$ by
$E(x,A)_X = \inf_{a \in A} d(x,a)$.
When there is no confusion as to the ambient space $X$,
we simply use the notation $E(x,A)$. If $B$ and $A$ are two subsets of $X$ and $\lambda$ is an scalar,
we set $A+B=\{x+y:x\in A \text{ and } y\in B\}$, $\lambda A=\{\lambda x:x\in A\}$, and $E(B,A) = \sup_{b \in B} E(b,A)$. Note that $E(B,A)$ may be different
from $E(A,B)$.
Finally, we recall that  $A\subset X$ is bounded if for every $r>0$ there exists $\lambda>0$ such that $A\subseteq \lambda B(0,r)$. This is quite different of being $d$-bounded, which means that $A\subseteq B(0,r)$ for a certain $r>0$.

The results described below have their origins in  the classical Lethargy Theorem by
S.N. Bernstein \cite{bernsteininverso}, stating that, for any linear
approximation scheme $\{A_n\}$ in a Banach space $X$, if $\dim A_n<\infty$ for all $n$ and $\{\varepsilon_n\}\searrow 0$, there exists
$x \in X$ such that $E(x,A_n)=\varepsilon_n$ for all $n\in \N$.
Bernstein's proof is based on a compactness argument, where he imposed  $\dim A_n<\infty$ for all $n$. In 1964 H.S. Shapiro \cite{shapiro} used Baire's category theorem and Riesz's lemma (on the existence of almost orthogonal elements to any closed linear subspace $Y$ of a Banach space $X$) to prove that,
for any sequence $A_1 \subsetneq A_2 \subsetneq \ldots \subsetneq X$ of
closed but not necessarily finite dimensional subspaces of a Banach space $X$,
and any sequence $\{\varepsilon_{n}\}\searrow 0$, there exists an $x\in X$ such that $E(x,A_{n})\neq\mathbf{O}(\varepsilon_{n})$.  This result was strengthened by Tjuriemskih \cite{tjuriemskih1}, who, under the very same conditions of Shapiro's Theorem, proved the existence of  $x\in X$ such that $E(x,A_{n})\geq \varepsilon_{n}$, $n=0,1,2,\cdots$. Moreover, Borodin \cite{borodin} gave a new easy proof of this result and proved that, for arbitrary infinite dimensional Banach spaces $X$ and for sequences $\{\varepsilon_n\}\searrow 0$ satisfying $\varepsilon_n>\sum_{k=n+1}^\infty\varepsilon_k$, $n=0,1,2,\cdots$, there exists  $x\in X$ such that $E(x,X_{n})= \varepsilon_{n}$, $n=0,1,2,\cdots$.

Motivated by these results, in \cite{almira_oikhberg} the authors gave several characterizations of
the approximation schemes with the property that for every non-increasing sequence
$\{\varepsilon_n\}\searrow 0$ there exists an element $x\in X$ such that
$E(x,A_n) \neq \mathbf{O}(\varepsilon_n)$. In this case we say that $\{A_n\}$ {\it satisfies Shapiro's Theorem on $X$}. In particular, Shapiro's original theorem claims that all non-trivial linear approximation schemes $(X,\{A_n\})$ with $X$ a Banach space, satisfy a result of this kind.

Let us introduce yet another definition: We say that the subset $Y$ of $X$ satisfies Shapiro's theorem with respect to the approximation scheme $(X, \{A_n\})$ if  for every sequence
$\{\varepsilon_n\}\searrow 0$ there exists an element $x\in Y$ such that
$E(x,A_n) \neq \mathbf{O}(\varepsilon_n)$. In order to simplify notation, and when there is no confusion, we will just say that the approximation scheme $\{A_n\}$ satisfies Shapiro's theorem on $Y$.

Now, while studying these problems for general approximation schemes, the following results were proved (see \cite[Theorem 2.2, Corollary 3.7]{almira_oikhberg}, \cite[Theorems  2.9, 4.2, 4.3 and 7.7]{almira_oikhberg_2}):
\begin{theorem}\label{teo_introd}
Let $X$ be a quasi-Banach space.  For any approximation scheme $(X,\{A_n\})$, the following are equivalent:
\begin{itemize}
\item[$(a)$]  The approximation scheme $\{A_n\}$ satisfies Shapiro's Theorem on $X$.

\item[$(b)$] There exists a constant $c>0$ and an infinite set $\mathbb{N}_{0}\subseteq\mathbb{N}$ such that for all $n\in\mathbb{N}_{0}$, there
exists some $x_{n}\in X\setminus \overline{A_{n}}$ which satisfies $E(x_{n},A_{n})\leq cE(x_{n},A_{K(n)}).$

\item[$(c)$]
There is no decreasing sequence $\{\varepsilon_n\}\searrow 0$ such that
$E(x,A_n)\leq \varepsilon_n\|x\|$  for all $x\in X$ and $n\in \N$.

\item[$(d)$] $E(S(0,1),A_n)=1$, $n=0,1,2,\ldots$.

\item[$(e)$] There exists $c>0$ such that $E(S(0,1),A_n)\geq c$, $n=0,1,2,\ldots$.
\end{itemize}
Moreover, if $X$ is a Banach space, then all these conditions are equivalent to:
\begin{itemize}
\item[$(f)$] For every non-decreasing sequence $\{\varepsilon_n\}_{n=0}^{\infty}\searrow 0$ there exists an element $x\in X$ such that $E(x,A_n)\geq \varepsilon_n$ for all $n\in\mathbb{N}$.
\end{itemize}
\end{theorem}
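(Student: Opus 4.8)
The plan is to prove Theorem~\ref{teo_introd} by establishing a cycle of implications among conditions $(a)$ through $(e)$, valid in the quasi-Banach setting, and then closing the loop back to $(a)$ through $(f)$ under the extra hypothesis that $X$ is Banach. I would organize the argument as $(a)\Rightarrow(b)\Rightarrow(c)\Rightarrow(d)\Rightarrow(e)\Rightarrow(a)$, since $(d)\Rightarrow(e)$ is trivial (take $c=1$) and several of the other steps are contrapositive reformulations. The conceptual heart is the dichotomy between a scheme that ``approximates uniformly well'' (the failure of Shapiro) and one that leaves elements of the unit sphere essentially unapproximated at every level.

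First I would prove the contrapositive $\neg(a)\Rightarrow\neg(c)$, which is really the definition-unwinding step: if Shapiro's Theorem fails on $X$, there is some $\{\varepsilon_n\}\searrow 0$ such that every $x\in X$ satisfies $E(x,A_n)=\mathbf{O}(\varepsilon_n)$, and a Baire-category argument (decomposing $X$ into the countable union of sets on which the implied constant is bounded) upgrades this to the uniform estimate $E(x,A_n)\leq C\varepsilon_n\|x\|$ for all $x$, after absorbing $C$ into a rescaled sequence. For $(c)\Rightarrow(d)$, I would argue contrapositively: if $E(S(0,1),A_{n_0})<1$ for some $n_0$, then by homogeneity $(A2)$ and the subadditivity of $d$ one can propagate a uniform bound $E(x,A_n)\leq\varepsilon_n\|x\|$ down the chain, using the doubling map $K$ from $(A1)$ to control how the approximation quality at level $K(n)$ feeds back into level $n$; this is where the quasi-Banach constant enters and must be tracked carefully. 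The implication $(e)\Rightarrow(b)$ is the geometric core: assuming $E(S(0,1),A_n)\geq c$ for all $n$, I would, for each $n$, pick a near-extremal $x_n$ on the sphere with $E(x_n,A_n)\geq c/2$ while trivially $E(x_n,A_{K(n)})\leq E(x_n,A_n)\leq 1$, yielding $E(x_n,A_n)\leq (2/c)E(x_n,A_{K(n)})$ on an infinite set, which is exactly $(b)$.

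The step I expect to carry the real weight is $(b)\Rightarrow(a)$, the direction that actually produces, from a single comparison inequality holding along an infinite set $\mathbb{N}_0$, an element witnessing the failure of the $\mathbf{O}(\varepsilon_n)$ decay for an \emph{arbitrary} prescribed $\{\varepsilon_n\}\searrow 0$. My plan here is a gliding-hump construction: using the elements $x_n\in X\setminus\overline{A_n}$ with $E(x_n,A_n)\leq cE(x_n,A_{K(n)})$, I would build $x=\sum_k \lambda_k y_k$ as a convergent series of suitably normalized and scaled copies $y_k$ drawn from the $x_n$ along a rapidly increasing subsequence of $\mathbb{N}_0$, choosing the scalars $\lambda_k$ (using homogeneity $(A2)$ and completeness of the $F$-space) so that the ``tail'' contributions are dominated while the $k$-th block forces $E(x,A_{n_k})$ to exceed any fixed multiple of $\varepsilon_{n_k}$. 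The delicate points are that $d$ is only translation invariant and subadditive rather than coming from a norm, so the scaling behaviour of $d(\lambda y,0)$ in $\lambda$ is not homogeneous and the hump estimates must be phrased in terms of the metric directly; this is precisely where the quasi-Banach or $F$-space structure, rather than Banach structure, is felt.

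For the final equivalence with $(f)$ under the Banach hypothesis, the implication $(f)\Rightarrow(a)$ is immediate, since $E(x,A_n)\geq\varepsilon_n$ certainly forces $E(x,A_n)\neq\mathbf{O}(\varepsilon_n)$ once we apply it to a sequence tending to zero slowly enough. The substantive direction is $(a)\Rightarrow(f)$: here I would invoke the strengthening of Shapiro's theorem due to Tjuriemskih and Borodin cited in the introduction, whose Riesz-lemma and almost-orthogonality techniques genuinely require the norm (hence the Banach restriction), to pass from the qualitative non-$\mathbf{O}$ statement to the quantitative pointwise lower bound $E(x,A_n)\geq\varepsilon_n$. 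I expect this last step to be where local convexity is indispensable, explaining why $(f)$ cannot be folded into the quasi-Banach equivalences.
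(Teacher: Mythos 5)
You should first be aware that the paper does not actually prove Theorem \ref{teo_introd}: it is imported from \cite{almira_oikhberg} (Theorem 2.2, Corollary 3.7) and \cite{almira_oikhberg_2}, so there is no in-paper proof to measure your argument against. The closest internal analogue is Theorem \ref{ST_fail}, whose Baire-category decomposition of $X$ into the closed sets $\Gamma_m$ is exactly the device you invoke for $\neg(a)\Rightarrow\neg(c)$, so that step is consonant with the paper's methods. Note also that the implications you actually describe are $(c)\Rightarrow(a)$, $(c)\Rightarrow(d)\Rightarrow(e)\Rightarrow(b)\Rightarrow(a)$, not the cycle $(a)\Rightarrow(b)\Rightarrow(c)\Rightarrow(d)\Rightarrow(e)\Rightarrow(a)$ announced at the outset; the scheme still closes, but only once you record the (trivial) implication $\neg(c)\Rightarrow\neg(a)$, since otherwise nothing returns to $(c)$ from the other conditions.

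The genuine gap is in $(e)\Rightarrow(b)$. You choose $x_n\in S(0,1)$ with $E(x_n,A_n)\geq c/2$ and observe $E(x_n,A_{K(n)})\leq E(x_n,A_n)\leq 1$; from these two facts the inequality $E(x_n,A_n)\leq (2/c)E(x_n,A_{K(n)})$ does not follow. Indeed $E(x_n,A_{K(n)})$ could be $0$ (an element far from $A_n$ may lie in $\overline{A_{K(n)}}$), in which case the claimed inequality fails outright: $(b)$ needs a \emph{lower} bound on the error at the deeper level $K(n)$ for the same element. The repair is to take $x_n$ near-extremal for $A_{K(n)}$ rather than for $A_n$: by $(e)$ applied to the index $K(n)$ pick $x_n\in S(0,1)$ with $E(x_n,A_{K(n)})\geq c/2$; then $E(x_n,A_n)\geq E(x_n,A_{K(n)})>0$ forces $x_n\notin\overline{A_n}$, while $E(x_n,A_n)\leq \|x_n\|=1\leq (2/c)E(x_n,A_{K(n)})$, which is $(b)$ with constant $2/c$. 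With that correction the remaining load-bearing step is $(b)\Rightarrow(a)$, which you rightly identify as the hard part; your gliding-hump outline is the standard route (it is essentially how \cite{almira_oikhberg} proceeds), but as written it is a plan rather than a proof, and the convergence of the series and the lower bounds on the humps under a quasi-norm still have to be carried out.
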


\begin{theorem}\label{teo_introd2} Let $X$ be a quasi-Banach space and assume that $(X,\{A_n\})$ is an approximation scheme which satisfies Shapiro's theorem on $X$.  If $Y\subseteq X$ is a finite codimensional subspace of $X$, then $\{A_n\}$ satisfies Shapiro's theorem on $Y$. If, furthermore, $X$ is Banach and $Y$ is closed in the topology of $X$, then for every sequence $\{\varepsilon_n\}\searrow 0$ there exists $y\in Y$ such that $E(y,A_n)_X\geq \varepsilon_n$ for all $n\in\mathbb{N}$.
\end{theorem}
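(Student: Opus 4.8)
The plan is to prove Theorem~\ref{teo_introd2} in two stages, matching its two assertions, and to reduce everything to the characterizations of Theorem~\ref{teo_introd}, especially the quantitative condition $(e)$. The overarching strategy is that Shapiro's property is detected by the uniform lower bound $E(S(0,1),A_n)\geq c$, and this lower bound is robust under passage to a finite codimensional subspace because such a subspace must meet every sphere, up to a controlled loss of mass.

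\emph{First stage (finite codimensional $Y$).} I would fix $Y\subseteq X$ with $\dim(X/Y)=m<\infty$ and seek to verify condition $(e)$ for the induced scheme on $Y$. Since $\{A_n\}$ satisfies Shapiro's theorem on $X$, Theorem~\ref{teo_introd}$(e)$ gives $c>0$ with $E(S(0,1),A_n)\geq c$ for all $n$; equivalently, for each $n$ there are unit vectors $x$ with $E(x,A_n)$ bounded below by $c$. The key step is to produce, inside the unit sphere of $Y$, vectors whose distance to $A_n$ remains bounded below by some $c'>0$ independent of $n$. First I would invoke a Riesz-type / quotient argument: because $Y$ has finite codimension, the quotient map kills only an $m$-dimensional complement, and a vector realizing near-maximal distance to $A_n$ can be corrected by an element of the finite dimensional complement without destroying the lower bound on $E(\cdot,A_n)$ by more than a fixed factor. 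In the quasi-Banach setting one must carry the modulus of concavity (the constant in the quasi-triangle inequality) through these estimates, so the resulting $c'$ will depend on $c$, on $m$, and on that constant, but crucially not on $n$. Having secured $E(S_Y(0,1),A_n)\geq c'$, condition $(e)$ of Theorem~\ref{teo_introd}, now read inside $Y$, yields that $\{A_n\}$ satisfies Shapiro's theorem on $Y$.

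\emph{Second stage (Banach $X$, closed $Y$, pointwise domination).} Here the goal upgrades from the qualitative $E(y,A_n)\neq\mathbf{O}(\varepsilon_n)$ to the pointwise inequality $E(y,A_n)_X\geq\varepsilon_n$ for a prescribed $\{\varepsilon_n\}\searrow 0$. In the Banach case we may appeal to condition $(f)$ of Theorem~\ref{teo_introd}, but we need it \emph{within} $Y$. The plan is to apply the already-established first stage to get that $\{A_n\}$ satisfies Shapiro's theorem on the Banach space $Y$ itself (closedness makes $Y$ complete, hence Banach, and finite codimension makes the first stage applicable), and then to run the $(a)\Rightarrow(f)$ implication of Theorem~\ref{teo_introd} internally to $Y$. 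The subtlety is that the errors in $(f)$ must be measured as $E(y,A_n)_X$, i.e.\ with respect to the ambient metric, whereas the intrinsic scheme on $Y$ approximates by $A_n\cap Y$ or by the trace of $A_n$; I would reconcile these by noting $E(y,A_n)_X\leq E(y,A_n\cap Y)_Y$ is the wrong direction, so instead I would run the Tjuriemskih/Borodin-style construction (a telescoping series $\sum \delta_k u_k$ of almost-orthogonal unit vectors $u_k\in Y$ chosen by Riesz's lemma relative to the closed subspaces $\overline{A_n}$) directly in $Y$ with the ambient norm, summing to a point $y\in Y$ for which $E(y,A_n)_X\geq\varepsilon_n$ by design.

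The main obstacle I anticipate is the first stage's transfer of the uniform lower bound from $X$ to $Y$ with a constant independent of $n$. Extracting a near-extremal unit vector for $E(\cdot,A_n)$ and then projecting it into a finite codimensional subspace risks collapsing its distance to $A_n$ to zero for infinitely many $n$ if done naively; the argument must exploit that the $m$-dimensional complement is fixed while $n$ varies, so that for large $n$ the subspaces $\overline{A_n}$ eventually see the full complement and the correction costs a bounded factor. Making this uniformity precise, while respecting the quasi-triangle constant in the non-locally-convex $F$-space setting, is the delicate point; the Banach conclusion of the second stage is then comparatively routine, resting on the classical almost-orthogonality construction carried out inside the complete space $Y$.
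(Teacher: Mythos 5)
The paper never proves Theorem \ref{teo_introd2}: it is quoted as background from \cite{almira_oikhberg} and \cite{almira_oikhberg_2}, so there is no internal proof to compare against. The argument used in those sources for the first assertion is the contrapositive of yours and is worth contrasting with your plan: write $X=Y+F$ with $\dim F=m<\infty$; every finite-dimensional subspace fails Shapiro's theorem (pick a basis $f_1,\dots,f_m$, use density of $\bigcup_nA_n$ so that $E(f_i,A_n)\to 0$, and the quasi-additivity $E(\sum_i\lambda_if_i,A_{K^{(m-1)}(n)})\leq C\sum_i|\lambda_i|E(f_i,A_n)$ together with Lemma \ref{sucesiones} to get one null sequence $\{\delta_n\}$ dominating $E(f,A_n)$ for all $f\in F$); hence if $Y$ also failed Shapiro's theorem with some $\{\varepsilon_n\}$, every $x=y+f$ would satisfy $E(x,A_{K(n)})\leq C(E(y,A_n)+E(f,A_n))=\mathbf{O}(\max\{\varepsilon_n,\delta_n\})$, contradicting Shapiro's theorem on $X$. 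This route needs neither a bounded projection onto $F$ nor any completeness of $Y$.

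Those two ingredients are exactly where your direct route breaks. The first assertion does not assume $Y$ closed, and a finite-codimensional subspace of a quasi-Banach space need not be closed (e.g.\ the kernel of a discontinuous linear functional). Your first stage needs a bounded projection onto the complement to keep the corrections $f_n$ in a bounded subset of $F$ (so that $E(f_n,A_n)\to 0$ uniformly), and it needs condition $(e)$ of Theorem \ref{teo_introd} ``read inside $Y$''; but $\{A_n\}$ is not an approximation scheme in $Y$ (density of $\bigcup_nA_n\cap Y$ in $Y$ is unavailable), and the implication from a uniform lower bound on the sphere of $Y$ to Shapiro's theorem on $Y$ rests on a Baire-category argument over $Y$, which requires $Y$ to be complete. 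Both obstructions are fatal when $Y$ is not closed. For the second assertion, your proposed construction of $y=\sum_k\delta_ku_k$ with $u_k$ almost orthogonal to $\overline{A_n}$ via Riesz's lemma presupposes that the $A_n$ are linear subspaces, whereas the theorem concerns general approximation schemes in which $A_n$ is merely closed under scalar multiplication with $A_n+A_n\subseteq A_{K(n)}$; the pointwise bound $E(y,A_n)\geq\varepsilon_n$ in \cite{almira_oikhberg}, \cite{almira_oikhberg_2} is obtained by a scheme-adapted series construction, not by Riesz's lemma.
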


\begin{theorem}\label{teo_introd3} Let $X$ be a quasi-Banach space and assume that $(X,\{A_n\})$ is an approximation scheme such that $A_n$ is boundedly compact on $X$ for all $n\in\mathbb{N}$.  If $Y\subseteq X$ is an  infinite dimensional closed subspace of $X$, then $\{A_n\}$ satisfies Shapiro's theorem on $Y$.
\end{theorem}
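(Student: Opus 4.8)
The plan is to reduce the assertion to a single uniform estimate, namely a lower bound $E(S(0,1)\cap Y,A_n)\geq c>0$ that is independent of $n$, and then to produce that bound by playing the bounded compactness of the $A_n$ against the non-compactness of the unit sphere of the infinite dimensional space $Y$. Throughout I would write $\|x\|=d(x,0)$, let $\kappa\geq 1$ be the modulus of the quasi-norm (so $\|u+v\|\leq\kappa(\|u\|+\|v\|)$), and abbreviate $S_Y=S(0,1)\cap Y$. Note that $S_Y\neq\emptyset$ and $E(y,A_n)\leq\|y\|=1$ for $y\in S_Y$, since $0\in A_n$ by $(A2)$.

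First I would record the reduction. Assume, for contradiction, that $\{A_n\}$ fails Shapiro's theorem on $Y$: there is $\{\varepsilon_n\}\searrow 0$ with every $\varepsilon_n>0$ and $E(x,A_n)=\mathbf{O}(\varepsilon_n)$ for all $x\in Y$. Then $Y=\bigcup_{M\in\N}Y_M$, where $Y_M=\{x\in Y:E(x,A_n)\leq M\varepsilon_n\text{ for all }n\}$ is closed because each $x\mapsto E(x,A_n)$ is continuous. Since $Y$ is closed in the complete space $X$ it is itself complete, so Baire's theorem gives an $M$, a centre $y_0\in Y$, and a radius $r>0$ with $B(y_0,r)\cap Y\subseteq Y_M$. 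For $h\in Y$ with $\|h\|\leq r$ I would pick near-best approximants $a_n,b_n\in A_n$ with $\|y_0-a_n\|\leq 2M\varepsilon_n$ and $\|y_0+h-b_n\|\leq 2M\varepsilon_n$; then $b_n-a_n\in A_n+A_n\subseteq A_{K(n)}$ by $(A1)$ and $(A2)$, so $E(h,A_{K(n)})\leq\|h-(b_n-a_n)\|\leq 4\kappa M\varepsilon_n$. Homogeneity of the quasi-norm upgrades this to $E(S_Y,A_{K(n)})\leq C_0\varepsilon_n$ for a constant $C_0$, and since $\ell\mapsto E(S_Y,A_\ell)$ is non-increasing while one may choose $n=n(\ell)\to\infty$ with $K(n)\leq\ell$, this forces $E(S_Y,A_\ell)\to 0$.

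The heart of the argument is to contradict this by proving $E(S_Y,A_m)\geq c$ for every $m$, with $c>0$ independent of $m$. Fix $m$ and set $\rho=E(S_Y,A_m)\leq 1$; suppose $\rho<1$ and choose $\rho<\rho'<1$. If $y\in S_Y$ and $a\in A_m$ satisfy $\|y-a\|<\rho'$, then $\|a\|\leq\kappa(\rho'+1)<2\kappa$, so every competing element lies in the set $\mathcal{K}=\overline{A_m\cap B(0,2\kappa)}$, which is compact by bounded compactness; hence $S_Y\subseteq\mathcal{K}+B(0,\rho')$. Covering the totally bounded $\mathcal{K}$ by finitely many balls of radius $\delta$ then confines $S_Y$ to finitely many balls of radius $\kappa(\delta+\rho')$. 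On the other hand, Riesz's lemma, whose proof uses only homogeneity and is therefore valid in any quasi-normed space, produces unit vectors $y_1,y_2,\ldots\in Y$ that are pairwise $\theta$-separated for a fixed $\theta\in(0,1)$, say $\theta=3/4$. By pigeonhole two of them share a ball, whence $\theta\leq 2\kappa^2(\delta+\rho')$; letting $\delta\to 0$ and $\rho'\to\rho$ gives $\rho\geq\theta/(2\kappa^2)=:c>0$, the bound being trivial when $\rho=1$. Thus $E(S_Y,A_m)\geq c$ for all $m$, which contradicts $E(S_Y,A_\ell)\to 0$ and proves the theorem.

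I expect the main obstacle to be this second step, extracting from bounded compactness a bound uniform in $m$. The delicate points are the intervention of the quasi-norm modulus $\kappa$, which rules out the optimal constant $1$ but is harmless for obtaining a strictly positive $c$, and the need to trap all competing elements $a\in A_m$ inside one fixed compact set so that total boundedness can be set against the Riesz separation of the infinite dimensional sphere $S_Y$. By contrast, the Baire reduction of the first step is routine once one observes that $(A1)$ and $(A2)$ turn the difference of two near-best approximants into a genuine element of $A_{K(n)}$.
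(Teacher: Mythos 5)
This theorem is one of the results the paper only quotes (it is attributed to \cite[Theorem 4.2]{almira_oikhberg_2} in the sentence preceding Theorem \ref{teo_introd}), so there is no in-paper proof to compare against; your argument therefore has to stand on its own, and it does. Your first step is essentially the Baire-category mechanism that the paper does write out in the proof of Theorem \ref{ST_fail}: decompose $Y=\bigcup_M Y_M$, locate a ball $B(y_0,r)\cap Y$ inside some $Y_M$, and use $(A1)$--$(A2)$ to turn the difference of two near-best approximants into an element of $A_{K(n)}$, which after rescaling forces $E(S(0,1)\cap Y,A_\ell)\to 0$. (The paper's version uses the symmetrization $B(0,r_0)\subseteq\frac12(B(x_0,r_0)+B(-x_0,r_0))$; your "translate by $h$" variant is equivalent and works because $Y$ is a subspace.) The second step -- trapping all competitors $a\in A_m$ with $\|y-a\|<\rho'$ inside the fixed compact set $\overline{A_m\cap B(0,2\kappa)}$, covering it by finitely many $\delta$-balls, and contradicting the existence of an infinite $\theta$-separated sequence in $S(0,1)\cap Y$ furnished by Riesz's lemma (whose proof indeed survives in quasi-normed spaces, since it only uses homogeneity and the closedness of finite-dimensional subspaces) -- is exactly the place where bounded compactness of $A_m$ and infinite-dimensionality of $Y$ must interact, and your quantitative bound $E(S(0,1)\cap Y,A_m)\geq\theta/(2\kappa^2)$ is correct and uniform in $m$, which is what the contradiction requires.

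One small point deserves a remark: in a general quasi-Banach space the quasi-norm, and hence $x\mapsto E(x,A_n)$, need not be continuous, so the sets $Y_M$ need not be literally closed. This is harmless: either invoke the Aoki--Rolewicz theorem to pass to an equivalent continuous $p$-norm at the outset, or apply Baire to the closures and observe that the quasi-triangle inequality gives $\overline{Y_M}\subseteq Y_{M'}$ with $M'=\lceil\kappa M\rceil$, which is all the argument needs. With that caveat addressed, the proof is complete.
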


\begin{theorem}\label{teo_introd4} Let $X$ be a Banach space and assume that $(X,\{A_n\})$ is linear approximation scheme on $X$ such that $\dim A_n<\infty$ for all $n\in\mathbb{N}$. If $Y\subseteq X$ is an  infinite dimensional closed subspace of $X$, then for every sequence $\{\varepsilon_n\}\searrow 0$ there exists $y\in Y$ such that $\|y\|=\varepsilon_0$ and $E(y,A_n)_X\geq \varepsilon_n$ for all $n\geq 1$.
\end{theorem}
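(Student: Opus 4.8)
The plan is to run a Bernstein--Tjuriemskih type construction entirely inside $Y$, obtaining $y$ as the sum of a rapidly decreasing series whose terms are unit vectors of $Y$ that are almost orthogonal to the successive $A_n$. The engine of the argument is the following geometric lemma, which is where the hypothesis $\dim A_n<\infty$ becomes decisive: for every finite dimensional subspace $F\subseteq X$, every infinite dimensional closed subspace $Y$, and every $\delta>0$, there exist a unit vector $u\in Y$ and a functional $h\in X^*$ with $\|h\|\le 1$, $h|_F\equiv 0$, and $h(u)\ge 1-\delta$ (so in particular $E(u,F)\ge 1-\delta$). First I would establish this lemma. Since $F$ is finite dimensional its unit ball is compact and best approximations from $F$ exist; combining this with a connectedness/intermediate--value argument along a path joining a unit vector $u$ to $-u$ on the (path--connected) unit sphere of $Y$, one forces the existence of a unit vector of $Y$ whose nearest point of $F$ is essentially $0$, i.e.\ a vector that is (almost) Birkhoff orthogonal to $F$. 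This is exactly the step that breaks down for merely closed $A_n$, and it explains why finite dimensionality is imposed here rather than only bounded compactness as in Theorem \ref{teo_introd3}.

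Granting the lemma, I would reduce to $\varepsilon_0=1$ by homogeneity and then build $y=\sum_{k\ge 1}c_ku_k$ inductively. I fix a strictly increasing sequence of indices $n_1<n_2<\cdots$ and, at step $k$, apply the lemma to the finite dimensional subspace $A_{n_k}+\mathrm{span}\{u_1,\dots,u_{k-1}\}$ to produce a unit vector $u_k\in Y$ and a functional $h_k$ with $\|h_k\|\le 1$, $h_k(u_k)\ge 1-\delta_k$, $h_k|_{A_{n_k}}\equiv 0$, and $h_k(u_j)=0$ for $j<k$. Choosing positive coefficients $c_k\searrow 0$ decaying fast enough that $c_k(1-\delta_k)-\sum_{j>k}c_j\ge\varepsilon_{n_k}$ for all $k$ (a Borodin--type tail condition, which can always be met after passing to a sufficiently sparse subsequence of indices and taking $\delta_k$ small), the series converges to some $y\in Y$ because $Y$ is closed. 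For any $a\in A_{n_k}$ we then get, using $h_k(a)=0$ and $|h_k(u_j)|\le 1$,
\[
\|y-a\|\ge h_k(y-a)=h_k(y)=c_kh_k(u_k)+\sum_{j>k}c_jh_k(u_j)\ge c_k(1-\delta_k)-\sum_{j>k}c_j\ge\varepsilon_{n_k},
\]
so that $E(y,A_{n_k})\ge\varepsilon_{n_k}$ for every $k$.

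To pass from the subsequence $(n_k)$ to every index I would use that both $n\mapsto E(y,A_n)$ and $n\mapsto\varepsilon_n$ are non-increasing: on a block $n_{k-1}<n\le n_k$ one has $E(y,A_n)\ge E(y,A_{n_k})\ge\varepsilon_{n_k}$, so it suffices to have chosen the $n_k$ densely enough that $\varepsilon_{n_k}$ never falls below the target $\varepsilon_n$ on that block; arranging this compatibly with the tail condition on $(c_k)$ is the principal bookkeeping difficulty. Finally, to secure $\|y\|=\varepsilon_0=1$ exactly rather than approximately, I would let the leading coefficient $c_1$ range over an interval: the map $c_1\mapsto\|y\|$ is continuous and sweeps through the value $1$, while all the inequalities $E(y,A_{n_k})\ge\varepsilon_{n_k}$ are kept valid with a fixed margin, so an intermediate--value choice of $c_1$ pins the norm without destroying the error bounds. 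The two genuinely delicate points, in decreasing order of difficulty, are the orthogonality lemma (the sole place where $\dim A_n<\infty$ is used) and the simultaneous achievement of the error bounds for all $n$ together with the exact normalization $\|y\|=\varepsilon_0$.
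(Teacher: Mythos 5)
First, a point of comparison: the paper does not prove Theorem \ref{teo_introd4} at all --- it is quoted in the introduction as Theorem 4.3 of \cite{almira_oikhberg_2} and used as a black box (e.g.\ in part $(c)$ of Theorem \ref{conv_oper}) --- so there is no internal proof to measure yours against. Your overall strategy, a Borodin/Tjuriemskih-type series $y=\sum_k c_k u_k$ built from vectors of $Y$ almost orthogonal to the growing finite-dimensional subspaces $A_{n_k}+\mathrm{span}\{u_1,\dots,u_{k-1}\}$, with functionals $h_k$ witnessing the lower bounds, is indeed the standard route to this result, and the estimate $E(y,A_{n_k})\ge c_k(1-\delta_k)-\sum_{j>k}c_j$ is correct.

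There are, however, two genuine gaps. The decisive one is your justification of the orthogonality lemma. The lemma is true --- even with $\delta=0$: this is the Krein--Krasnoselskii--Milman theorem on the gap between subspaces, $\sup_{u\in S_Y}E(u,F)=1$ whenever $\dim Y>\dim F$ --- but the ``intermediate-value argument along a path from $u$ to $-u$'' only works when $\dim F=1$. For $\dim F\ge 2$ the (approximate) nearest point varies in a multidimensional space, a path from $P(u)$ to $P(-u)=-P(u)$ need not pass through $0$, and the metric projection need not be single-valued or continuous; the correct tool is the Borsuk--Ulam antipodal theorem applied to a continuous odd selection of approximate nearest points on the unit sphere of a $(\dim F+1)$-dimensional subspace of $Y$. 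Note that the naive Riesz-type argument (an $\epsilon$-net of $S_F$ plus norming functionals vanishing on a finite-codimensional subspace) only yields the constant $\tfrac12$, so this step cannot be waved through. The second gap is the passage from the subsequence $(n_k)$ to all $n$: on a block $n_{k-1}<n\le n_k$ you obtain $E(y,A_n)\ge\varepsilon_{n_k}$ but need $E(y,A_n)\ge\varepsilon_n$, and $\varepsilon_n\ge\varepsilon_{n_k}$ because $\{\varepsilon_n\}$ is non-increasing; your requirement that ``$\varepsilon_{n_k}$ never falls below $\varepsilon_n$ on the block'' therefore forces $\varepsilon$ to be constant on each block, which cannot be arranged by sparsifying $(n_k)$. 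The fix is to run the construction at every index (take $n_k=k$; the subspaces $A_k+\mathrm{span}\{u_1,\dots,u_{k-1}\}$ are still finite dimensional), or to target $\varepsilon_{n_{k-1}+1}$ at step $k$. Finally, the exact normalization $\|y\|=\varepsilon_0$ is not purely cosmetic: since $0\in A_1$ gives $E(y,A_1)\le\|y\|$, the admissible case $\varepsilon_1=\varepsilon_0$ forces $E(y,A_1)=\|y\|$ exactly, so the $\delta=0$ form of the lemma is genuinely needed there, not just its approximate version.
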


\begin{theorem}\label{teo_introd5}  Suppose  $Y\subseteq X$ is an  infinite dimensional closed subspace of a Banach space $X$, and $\textit{E}=\{e_i\}_{i=0}^\infty$ is an unconditional basis of $X$. Set
\[
\Sigma_n(\textit{E})=\bigcup_{I\subseteq \mathbb{N}, \#(I)=n}\textbf{span}\{e_i:i\in I\}, \ \ (n\in\mathbb{N}).
\]
Then, $Y$ satisfies Shapiro's theorem with respect to the approximation scheme $\{\Sigma_n(\textit{E})\}$.
\end{theorem}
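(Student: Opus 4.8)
The plan is to pass from the (nonlinear, possibly wild) scheme $\{\Sigma_n(E)\}$ restricted to $Y$ to the much more tractable $n$-term scheme attached to a block basis, and to verify condition $(e)$ of Theorem~\ref{teo_introd} there; the bridge between the two is a projection estimate coming from unconditionality. First, since $Y$ is infinite dimensional and $\{e_i\}$ is a Schauder basis, the Bessaga--Pe\l czy\'nski selection principle produces a seminormalized sequence $\{u_k\}\subset Y$ that is a small perturbation of a disjointly supported block basis $\{v_k\}$ of $\{e_i\}$: one has $\|v_k\|=1$, $\mathrm{supp}(v_k)=B_k$ with $B_1<B_2<\cdots$ consecutive finite intervals, and $\|u_k-v_k\|\le\eta_k$ for a summable $\{\eta_k\}$ which I fix as small as needed \emph{after} the target rate is revealed. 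Because $\{e_i\}$ is unconditional with constant $C_u$, every coordinate projection $P_A$ has $\|P_A\|\le C_u$ and $\{v_k\}$ is again $C_u$-unconditional. Writing $\Sigma_n^v=\bigcup_{\#L=n}\textbf{span}\{v_k:k\in L\}$, the disjointness of the $B_k$ yields the decisive inequality: for $z=\sum_k a_k v_k$ and any $p\in\Sigma_n(E)$, at most $n$ blocks meet $\mathrm{supp}(p)$, so projecting onto the untouched windows gives
\[
E(z,\Sigma_n(E))\ \ge\ C_u^{-1}\,E(z,\Sigma_n^v).
\]
Thus it suffices to produce uniform lower bounds for the block scheme.

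Next I verify condition $(e)$ of Theorem~\ref{teo_introd} for $(\overline{\textbf{span}}\{v_k\},\{\Sigma_n^v\})$ (a genuine approximation scheme, with $K(n)=2n$) using the flat vectors $z_N=v_1+\cdots+v_N$. Put $\tau(N)=\|z_N\|$ and $\psi(n)=\sup_{\#L=n}\|\sum_{k\in L}v_k\|\le n$. Two complementary estimates hold: removing any $n$ of the $N$ blocks leaves $\|\sum_{k\in S}v_k\|\ge\tau(N)-\psi(n)$, while $C_u$-suppression gives $\|\sum_{k\in S}v_k\|\ge C_u^{-1}$ whenever $S\neq\emptyset$. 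Hence $E(z_N,\Sigma_n^v)/\tau(N)\ge\max\big(1-\psi(n)/\tau(N),\ C_u^{-1}/\tau(N)\big)$. Splitting on whether $\sup_N\tau(N)=\infty$ (choose $N$ with $\tau(N)\ge 2\psi(n)$, getting $\ge\tfrac12$) or $\sup_N\tau(N)=L<\infty$ (use the second bound, getting $\ge C_u^{-1}/L$) produces a single constant $c>0$, independent of $n$, with $\sup_{\|z\|=1}E(z,\Sigma_n^v)\ge c$. By Theorem~\ref{teo_introd} the block scheme satisfies Shapiro's theorem, and, concretely, far-out flat vectors are uniformly hard to approximate.

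The crux is realizing the witness inside $Y$: the $v_k$ live in $X$, only the $u_k$ lie in $Y$, and a naive Lipschitz transfer fails because a fixed perturbation swamps the vanishing scale $\varepsilon_n$. I therefore build the element directly in $Y$ by a gliding hump. Given $\{\varepsilon_n\}\searrow0$, I choose inductively $n_1<n_2<\cdots$ and far-out block sets $F_j$ so that the perturbed projection estimate $E(\sum_{k\in F_j}u_k,\Sigma_{n_j}(E))\ge C_u^{-1}E(\sum_{k\in F_j}v_k,\Sigma_{n_j}^v)-\sum_{k\in F_j}\eta_k$, combined with the flat bound of the previous paragraph and $\eta_k$ taken tiny, gives for the normalized vectors $w_j=(\sum_{k\in F_j}u_k)/\|\sum_{k\in F_j}u_k\|$ a uniform bound $E(w_j,\Sigma_{n_j}(E))\ge c'>0$, together with window-leakage $\theta_j$ as small as desired. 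Setting $x=\sum_j\delta_j w_j\in Y$ with $\delta_j\le 2^{-j}$ and $\delta_j/\varepsilon_{n_j}\to\infty$ (possible since $\varepsilon_{n_j}\to0$), projection onto the $j$-th window plus unconditionality and the fast decay of $\theta_j$ give $E(x,\Sigma_{n_j}(E))\ge (c'/2C_u)\,\delta_j$, whence $E(x,\Sigma_{n_j}(E))/\varepsilon_{n_j}\to\infty$ and $E(x,\Sigma_n(E))\neq\mathbf{O}(\varepsilon_n)$ with $x\in Y$.

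I expect the hardest step to be exactly this in-$Y$ realization, namely controlling both the Bessaga--Pe\l czy\'nski leakages $u_k-v_k$ and the window-leakages $\theta_j$ against an adversarially fast rate $\varepsilon_n$. The resolution rests entirely on the order of quantifiers: the block approximation may be sharpened after $\{\varepsilon_n\}$ is fixed, so the clean estimates of the first two paragraphs survive the transfer from $X$ into $Y$.
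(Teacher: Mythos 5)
This theorem is stated in the paper without proof: it is quoted verbatim from \cite{almira_oikhberg_2} (Theorem 7.7), so there is no in-paper argument to compare against. Your proposal is, in outline, the natural (and essentially the standard) proof of this result, and its architecture is sound: the projection inequality $E(z,\Sigma_n(E))\ge C_u^{-1}E(z,\Sigma_n^v)$ for block vectors is correct (at most $n$ blocks meet the support of an $n$-term approximant, and the coordinate projection onto the untouched blocks annihilates it), and the dichotomy on $\sup_N\|v_1+\cdots+v_N\|$ does yield a uniform constant $c>0$ in condition $(e)$ of Theorem \ref{teo_introd} for the block scheme (modulo one extra factor of $C_u^{-1}$ you elide when passing from ``remove $n$ blocks with coefficient $1$'' to arbitrary coefficients on those $n$ blocks).

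The one place where your write-up is thinner than the difficulty warrants is the leakage control in the gliding hump, and you should be aware that ``choose the $\eta_k$ small after $\{\varepsilon_n\}$ is revealed'' is \emph{not} by itself enough. You need the contribution of all the other terms of $x=\sum_i\delta_i w_i$ to the window $W_j$ to be $o(\delta_j)$ for every $j$ simultaneously, with $\delta_j\to 0$; a perturbation $u_k-v_k$ fixed at an early stage $i<j$ contributes up to $C_u\delta_i\|u_k-v_k\|/\mu_i$ to $\|P_{W_j}x-\delta_j\tilde w_j\|$, and no a priori smallness of $\|u_k-v_k\|$ can beat \emph{all} future $\delta_j$. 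The fix requires two separate mechanisms: (i) for earlier terms, choose $W_j$ so far out that it misses the essential supports of the finitely many previously selected $u_k$ (using that tail projections $P_{(N,\infty)}u_k\to 0$ for each fixed $u_k$); (ii) for later terms, choose $\eta_k$ at stage $j(k)$ small relative to $\min(\delta_1,\dots,\delta_{j(k)})$, which forces the Bessaga--Pe\l czy\'nski selection to be interleaved with the window construction rather than performed once up front. Your phrases ``far-out block sets $F_j$'' and ``$\theta_j$ as small as desired'' suggest you have this in mind, but as written the argument attributes the rescue to the order of quantifiers on the $\eta_k$ alone, which would not suffice. With (i) and (ii) made explicit the proof is complete.
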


The main goal of this paper is to initiate a study about approximation schemes that satisfy Shapiro's theorem in the $F$-spaces setting. This question was studied, for the case of linear approximation schemes, by G. Albinus \cite{albinus}. In section 2  we characterize, for a large class of $F$-spaces $(X,d)$, the approximation schemes $\{A_n\}$ which satisfy Shapiro's theorem on $X$   and, as a consequence, we give a new proof of Albinus's theorem and we show a few more examples of approximation schemes satisfying Shapiro's theorem on $F$-spaces. In section 3 we use the ideas of Section 2 to prove a general lethargy result for arbitrary scales of numbers and, as a consequence, we prove that, for a large class of quasi Banach spaces $X$, all s-number sequences $s_n(T)$ satisfy Shapiro's theorem in $\mathcal{L}(X)$.	 Finally, in section 4, we add some new applications of the lethargy results to the study of slow convergence of sequences of (possibly nonlinear) operators, a subject which has been recently investigated by Deutsch and Hundal for the case of continuous linear operators in the Banach setting \cite{DH1,DH2,DH3}.

\section{Shapiro's Theorem for $F$-spaces}\label{criteria}

Let us start with some general considerations about approximation schemes and the Shapiro's theorem. The first observation is that approximation schemes are only interesting in the infinite dimensional context:

\begin{proposition} \label{cero} Let $(X,d)$ be a metric vector space. If $\dim(X)<\infty$ and $\{A_n\}$ satisfies the conditions $(A1)$, $(A2)$ and $(A3)$ above, then there exists $N\in \mathbb{N}$ such that $A_N=X$.
\end{proposition}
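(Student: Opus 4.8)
The plan is to reduce the statement to a purely linear-algebraic fact about finite-dimensional spaces, obtained in two stages: first show that the linear spans of the $A_n$ eventually exhaust $X$, and then use $(A1)$ and $(A2)$ to upgrade this to $A_N=X$ for some $N$. We may assume $m:=\dim(X)\geq 1$, since for $X=\{0\}$ density $(A3)$ forces some $A_n\ni 0$ and then $A_n=X$ trivially.

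For the first stage, set $V_n:=\spn(A_n)$. Since $A_n\subseteq A_{n+1}$, the $V_n$ form a non-decreasing chain of subspaces of $X$, so their dimensions are non-decreasing integers bounded by $m$ and hence stabilize: there is $N_0$ with $V_n=V_{N_0}$ for all $n\geq N_0$. Because the $A_n$ are nested, a finite linear combination of elements of $\bigcup_n A_n$ lies in some single $A_k$, so $\bigcup_n V_n=\spn\bigl(\bigcup_n A_n\bigr)$ is itself a linear subspace of $X$. As $(X,d)$ is a (Hausdorff) metric vector space of finite dimension, every linear subspace is closed; this subspace contains the dense set $\bigcup_n A_n$ by $(A3)$, and therefore equals $X$. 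Combining this with the stabilization gives $V_{N_0}=X$, i.e. $A_{N_0}$ contains a basis $v_1,\ldots,v_m$ of $X$.

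For the second stage I would realize every vector of $X$ as an element of a single $A_N$. By $(A2)$ (with $\lambda=0$ we get $0\in A_{N_0}$, and for arbitrary $\lambda$) each scalar multiple $\lambda v_i$ lies in $A_{N_0}$. The additivity axiom $(A1)$ only gives $A_n+A_n\subseteq A_{K(n)}$, so I would iterate it: writing $K^{(j)}$ for the $j$-fold composition of $K$, a straightforward induction yields that the $2^{j}$-fold sumset $\underbrace{A_{N_0}+\cdots+A_{N_0}}_{2^{j}}$ is contained in $A_{K^{(j)}(N_0)}$. Choosing $j$ with $2^{j}\geq m$ and setting $N:=K^{(j)}(N_0)$, any $x=\sum_{i=1}^m\lambda_i v_i\in X$ is a sum of the $m$ elements $\lambda_i v_i\in A_{N_0}$ together with $2^{j}-m$ copies of $0\in A_{N_0}$, hence $x\in A_N$. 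Therefore $X\subseteq A_N$, and since $A_N\subseteq X$ always, we conclude $A_N=X$.

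The routine parts are the stabilization of dimensions and the closedness of finite-dimensional subspaces. The step I expect to be the genuine obstacle, and the one where all three axioms must cooperate, is the promotion from $\spn(A_{N_0})=X$ to $A_N=X$: scalar invariance $(A2)$ only supplies whole lines through the basis vectors, while $(A1)$ doubles sumsets rather than closing $A_{N_0}$ under addition. The crux is thus to observe that, because $m=\dim X$ is finite, a bounded number of doublings suffices, and that iterating $K$ keeps the resulting index finite so that a single $A_N$ captures all of $X$.
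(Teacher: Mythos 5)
Your proof is correct, and its second half (passing from $\mathbf{span}(A_{N_0})=X$ to $A_N=X$ by writing each $x$ as $\sum_i\lambda_i v_i$ with $\lambda_i v_i\in A_{N_0}$ via $(A2)$ and absorbing the $m$-fold sum into a single $A_N$ by iterating $K$) is essentially identical to the paper's, though you are rather more careful than the paper in justifying that the iterated sumset lands in $A_{K^{(j)}(N_0)}$ --- your doubling-plus-padding-with-zeros argument is the right way to make that step airtight. Where you genuinely diverge is the first half. The paper sets $X_n=\mathbf{span}(A_n)$, invokes Baire's category theorem to find some $X_m$ with nonempty interior, and then uses the symmetrization trick $B(0,r)\subseteq\frac{1}{2}(B(x,r)+B(-x,r))$ together with the fact that balls are absorbing to conclude $X_m=X$. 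You instead observe that the increasing chain $V_n=\mathbf{span}(A_n)$ of subspaces of an $m$-dimensional space must stabilize, that its union is a subspace (hence closed, $X$ being finite dimensional and Hausdorff), and that density from $(A3)$ then forces $V_{N_0}=X$. Your route is more elementary --- it needs neither completeness nor Baire, only the closedness of finite-dimensional subspaces, which the paper uses anyway --- and it makes transparent that the finite-dimensionality hypothesis is doing all the work. The paper's Baire argument buys nothing extra here (the ambient space is finite dimensional, so category considerations are overkill), though it foreshadows the Baire-category machinery used later in Theorem 2.8. You also handle the degenerate case $X=\{0\}$ explicitly, which the paper silently ignores.
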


\begin{proof} Let us set $X_n=\mathbf{span}(A_n)$, $n=0,1,\cdots$. Obviously, $X_n$ is a closed subspace of $X$ (since $s=\dim(X)<\infty$, which implies that all its subspaces are closed). Moreover, $\bigcup_nX_n$ is a dense subspace of $X$. Then Baire category theorem claims that there exists $m\in\mathbb{N}$ such that $X_m$ has non-empty interior. Assume that $B(x,r)\subset X_m$. Then $B(-x,r)\subset X_m$ and
\[
B(0,r)\subseteq \frac{1}{2}(B(x,r)+B(-x,r)) \subset X_m,
\]
since $d$ is translation invariant, which implies that, if $d(z,0)\leq r$ then $d(x+z,x)\leq r$, $d(-x+z,-x)\leq r$ and $z=\frac{1}{2}((x+z)+(-x+z))$. It follows that $X_m=X$ since the balls $B(0,r)$ are absorbing subsets of $X$. Now, $A_m$ spans $X_m$, so that we can take an algebraic basis of $X_m=X$ formed by elements of $A_m$. In particular, every  $x\in X$ is a finite sum $\sum_{k=1}^s \lambda_k a_k$ of elements of $A_m$ (since $\lambda A_m\subseteq A_m$ for all scalar $\lambda$). On the other hand, $A_m+A_m+\cdots A_m$ ($s$ times) is a subset of $A_h(m)$, where $h(m)=K(K(\cdots K(m)\cdots ))=N\in\mathbb{N}$ is a fixed finite number. This ends the proof.
\end{proof}

In the normed and quasi-normed setting, two (quasi-)norms $\|\cdot\|_1$ and $\|\cdot\|_2$, defined over the same vector space $X$, are equivalent (i.e., define the same topology on $X$) if and only if there exists two constants $C_1,C_2>0$ such that $C_1\|x\|_2\leq \|x\|_1\leq C_2\|x\|_2$ for all $x\in X$. This has a nice consequence that an approximation scheme $\{A_n\}$ in $X$ satisfies Shapiro's theorem with respect to the (quasi-) norm $\|\cdot\|_1$ if and only if it satisfies Shapiro's theorem with respect to any equivalent (quasi-)norm $\|\cdot\|_2$. In the case of $F$-spaces the question is much more delicate, since the equivalence of two distances $d_1$, $d_2$ is a much more subtle concept. Recall that metrics $d_1$ and $d_2$ over the vector space $X$ are equivalent  if they generate the same topology on $X$.

\begin{definition}[Rolewicz, \cite{rolewicz}] The metric $d$ is non-decreasing if $d(\alpha x, 0)\leq d(x,0)$ whenever $0\leq \alpha\leq 1$.
\end{definition}

\begin{remark}\begin{itemize}
\item[$(i)$] If the metric $d$ is non decreasing, then $d(\alpha x, 0)\leq d(\beta x,0)$ whenever $0\leq \alpha\leq \beta$ since, if  $0\leq \alpha< \beta$, then $0\leq \frac{\alpha}{\beta}\leq 1$ and $d(\alpha x,0)=d(\frac{\alpha}{\beta}\beta x,0) \leq d(\beta x,0)$.

\item[$(ii)$] Assume that $d$ is non decreasing and $0<\lambda\leq n$ with $n\in \mathbb{N}$. Then $d(\lambda x,0)=d(\frac{\lambda}{n} n x,0) \leq d(n x,0)\leq n d(x,0)$ for all $x\in X$. Consequently, if $A\subset X$ satisfies $\alpha A\subset A$ for all scalar $\alpha$, then
\[
E(\alpha x,A)\leq ([\alpha]+1)E(x,A) \text{ for all } x\in X \text{ and } \alpha>0.
\]
(Here $[\alpha]$ denotes the integral part of $\alpha$).

\item[$(iii)$] If $(X,d)$ is a metric vector space, then $d^*(x,y)=\sup_{0\leq t\leq 1}d(tx,ty)$ defines a non-decreasing equivalent metric on $X$ ( see \cite[Theorem 1.2.2.]{rolewicz}).
\end{itemize}
\end{remark}

\begin{example} If  $(X,d)$ is a locally convex metrizable topological vector space with metric $d(x,y)=\max_{k\in\mathbb{N}}\{2^{-k}\frac{p_k(x-y)}{1+p_k(x-y)}\}$, where $\{p_k\}$ is a separating family of semi norms of $X$, then $\|x\|_X=d(x,0)$ satisfies
\[
\min\{\alpha,1\}\|x\|_X\leq \|\alpha x\|_X\leq \max\{\alpha,1\}\|x\|_X \text{ for all } x\in X \text{ and } \alpha>0.
\]
In particular, $d$ is non-decreasing. Furthermore, if $A\subset X$ satisfies $\lambda A\subset A$ for all scalar $\lambda$, then
\[
\min\{\alpha,1\}E(x,A)\leq E(\alpha x,A)\leq \max\{\alpha,1\}E(x,A) \text{ for all } x\in X \text{ and } \alpha>0.
\]
\end{example}

\begin{proof} To prove this result it is enough to demonstrate that, if $t,\alpha\geq 0$,  then
\begin{equation}
\min\{\alpha,1\}\frac{t}{1+t}\leq \frac{\alpha t}{1+\alpha t} \leq \max\{\alpha,1\}\frac{t}{1+t}.
\end{equation}
These inequalities follow directly from the fact that $\phi(t)=\frac{t}{1+t}$ is an increasing function on $(0,+\infty)$ since, if $\alpha>1$, then 
\[
\frac{t}{1+t}\leq \frac{\alpha t}{1+\alpha t} \leq \alpha\frac{t}{1+t},
\]
and, if $\alpha<1$, then 
\[
\alpha \frac{t}{1+t}\leq \frac{\alpha t}{1+\alpha t} \leq \frac{t}{1+t},
\]
which is what we wanted to prove.
\end{proof}
\begin{example}[Musielak and Orlicz] Let  $X$ be a vector space with a metrizing modular $\rho(x)$, then
$X_{\rho}=\{x\in X:\rho(x)<\infty\}$ is a vector space and
\[
d_{\rho}(x,y)=\inf\{\varepsilon>0:\rho\left(\frac{x-y}{\varepsilon}\right)<\varepsilon\}
\]
defines a metric on $X_{\rho}$ which is non decreasing and invariant by translations . Moreover, if $\|x\|_{\rho}=d_{\rho}(x,x)$, then $\{\|x_n\|_{\rho}\}\to 0$ if and only if $\rho(x_n)\to 0$ (See \cite[page 6, Proposition 1.2.1. and Theorem 1.2.4.]{rolewicz} for the definition of modulars and a proof of this result).
 \end{example}

\begin{definition} Let $\{A_n\}$ be an approximation scheme on the $F$-space $(X,d)$, and let $B\subseteq X$. We say that:
\begin{itemize}
\item[$(a)$]$\{A_n\}$ fails Shapiro's theorem on $B$ if there exists $\{\varepsilon_n\}\searrow   0$ such that, if $x\in B$ then $E(x,A_n)\leq C(x)\varepsilon_n$ for all $n\in\mathbb{N}$ and a certain constant $C(x)>0$.
\item[$(b)$] $\{A_n\}$ fails Shapiro's theorem uniformly on $B$ if there exists $\{\varepsilon_n\}\searrow   0$  such that, $E(x,A_n)\leq  \varepsilon_n$ for all $n\in\mathbb{N}$ and all $x\in B$.
\item[$(c)$]  $\{A_n\}$ satisfies Shapiro's theorem on $B$ if for every  $\{\varepsilon_n\}\searrow   0$ there exists $x\in B$ such that $E(x,A_n)\neq \mathbf{O}(\varepsilon_n)$.
\end{itemize}
\end{definition}

\begin{theorem} \label{ST_fail} Let $(X,d)$ be an $F$-space and assume that $d$ is non-decreasing. Let $\{A_n\}$ be an approximation scheme in $X$. Then the following are equivalent claims:
\begin{itemize}
\item[$(i)$] $\{A_n\}$ fails Shapiro's theorem in $X$.
\item[$(ii)$] There exists $r_0>0$ such that  $\{A_n\}$ fails Shapiro's theorem uniformly on the ball $B(0,r_0)$.
\end{itemize}
Consequently, the approximation scheme $\{A_n\}$ satisfies Shapiro's theorem on $X$ if and only if  $\inf_{n\in\mathbb{N}}E(B(0,r),A_n)>0$ for all $r>0$.
\end{theorem}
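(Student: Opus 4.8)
The plan is to prove $(i)\Leftrightarrow(ii)$ and then read off the final assertion. Note that ``$\{A_n\}$ satisfies Shapiro's theorem on $X$'' is precisely the logical negation of $(i)$, so it is equivalent to the negation of $(ii)$. Since $A_n\subseteq A_{n+1}$, the quantity $E(B(0,r),A_n)$ is non-increasing in $n$, and ``$\{A_n\}$ fails Shapiro's theorem uniformly on $B(0,r)$'' means exactly that $E(B(0,r),A_n)\to 0$, i.e.\ $\inf_n E(B(0,r),A_n)=0$. Hence the negation of $(ii)$ reads $\inf_n E(B(0,r),A_n)>0$ for every $r>0$, which is the stated characterization.

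The implication $(ii)\Rightarrow(i)$ is soft. Assume $E(x,A_n)\le\varepsilon_n$ for all $x\in B(0,r_0)$ and all $n$. For an arbitrary $x\in X$, continuity of scalar multiplication together with the non-decreasing hypothesis gives $d(\lambda x,0)\to 0$ as $\lambda\to 0^+$, so some $\lambda\in(0,1]$ satisfies $\lambda x\in B(0,r_0)$ and hence $E(\lambda x,A_n)\le\varepsilon_n$. Writing $x=\tfrac1\lambda(\lambda x)$ and invoking the scaling estimate $E(\alpha y,A)\le([\alpha]+1)E(y,A)$ from the Remark above yields $E(x,A_n)\le([1/\lambda]+1)\varepsilon_n$, i.e.\ $(i)$ with $C(x)=[1/\lambda]+1$.

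The substantial direction is $(i)\Rightarrow(ii)$, which I would attack with Baire's theorem. First observe that $x\mapsto E(x,A_n)$ is $1$-Lipschitz for $d$: translation invariance and the triangle inequality give $E(x,A_n)\le d(x,y)+E(y,A_n)$. Consequently every sublevel set is closed, and so is $F_k=\{x\in X:E(x,A_n)\le k\varepsilon_n\text{ for all }n\}=\bigcap_n\{x:E(x,A_n)\le k\varepsilon_n\}$. By $(i)$ we have $X=\bigcup_{k}F_k$, and as $(X,d)$ is complete, Baire's theorem furnishes an index $k_0$, a point $x_0$ and a radius $r>0$ with $B(x_0,r)\subseteq F_{k_0}$; in particular $E(x_0,A_n)\le k_0\varepsilon_n$ for all $n$.

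The delicate step, which I expect to be the crux, is to move this uniform bound to a ball about the origin. For $z\in B(0,r)$ one has $d(z+x_0,x_0)=d(z,0)\le r$, so $z+x_0\in B(x_0,r)$ and $E(z+x_0,A_n)\le k_0\varepsilon_n$. Choosing near-optimal approximants $a,b\in A_n$ of $z+x_0$ and of $x_0$, the difference $a-b$ lies in $A_n+A_n\subseteq A_{K(n)}$ by $(A1)$--$(A2)$; combining this with translation invariance and the subadditivity $d(u+v,0)\le d(u,0)+d(v,0)$, a short computation gives $E(z,A_{K(n)})\le 2k_0\varepsilon_n$ for every $z\in B(0,r)$. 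It then remains to repackage this bound, indexed by $K(n)$, as a genuine non-increasing null sequence $\{\delta_m\}$ dominating $E(z,A_m)$ uniformly on $B(0,r)$. Since $K(n)\ge n\to\infty$, I would extract an increasing sequence $M_j=K(n_j)$, set $\delta_m=2k_0\varepsilon_{n_{j(m)}}$ with $j(m)=\max\{j:M_j\le m\}$ for large $m$ (and a suitable constant for the finitely many small $m$, using that $0\in A_n$ bounds the errors on $B(0,r)$), and invoke the monotonicity of $m\mapsto E(z,A_m)$ to verify $E(z,A_m)\le\delta_m$. This exhibits a uniform failure on $B(0,r)$ and proves $(ii)$ with $r_0=r$.
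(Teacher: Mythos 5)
Your proof is correct, and in the crucial direction $(i)\Rightarrow(ii)$ it takes a genuinely different route from the paper. Both arguments start identically: the sets $\Gamma_k=\{x: E(x,A_n)\le k\varepsilon_n \text{ for all } n\}$ are closed (your Lipschitz observation is the right justification), Baire gives a ball $B(x_0,r)\subseteq\Gamma_{k_0}$, and one must transport the bound to a ball centred at $0$. The paper does this by symmetrization: $B(-x_0,r)\subseteq\Gamma_{k_0}$ and $B(0,r)\subseteq\frac12(B(x_0,r)+B(-x_0,r))$, estimating $E(\frac{x+y}{2},A_{K(n)})\le E(\frac x2,A_n)+E(\frac y2,A_n)\le E(x,A_n)+E(y,A_n)$, which is where the non-decreasing hypothesis (in the weak form $d(\frac x2,0)\le d(x,0)$) enters; it also pre-processes $\{\varepsilon_n\}$ via the cited Lemma on sequences so that $\varepsilon_n\le 2\varepsilon_{K(n+1)-1}$, which lets it convert the $K(n)$-indexed bound back to the \emph{same} sequence up to a constant. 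You instead write $z=(z+x_0)-x_0$, use $(A2)$ to get $-b\in A_n$ and $(A1)$ to place $a-b$ in $A_{K(n)}$, obtaining $E(z,A_{K(n)})\le 2k_0\varepsilon_n$ directly, and then build a fresh null sequence $\{\delta_m\}$ by reindexing (your construction via $j(m)=\max\{j: K(n_j)\le m\}$, monotonicity of $m\mapsto E(z,A_m)$, and $0\in A_n$ for the small indices, is sound). Two consequences of your route are worth noting: it dispenses with the sequence-regularization lemma entirely, and it uses no monotonicity of $d$ in this direction at all, so your $(i)\Rightarrow(ii)$ holds for any $F$-space with translation-invariant metric --- slightly stronger than the paper's own remark, which still assumes $d(\frac x2,0)\le d(x,0)$. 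The price is that the uniform sequence you produce is a reindexed one rather than a constant multiple of the original $\{\varepsilon_n\}$, which is harmless for the statement. Your $(ii)\Rightarrow(i)$ and the derivation of the final characterization from the negation of $(ii)$ coincide with the paper's.
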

For the proof of this result we need to use the following general property about sequences of positive real numbers:

\begin{lemma}\label{sucesiones} Given $\{\varepsilon_n\}\searrow   0$ and $\{h(n)\}$ an increasing sequence of natural numbers satisfying  $n\leq h(n)$ for all $n$, there exists a sequence
$\{\xi_n\}\searrow   0$ such that $\varepsilon_n\leq \xi_n$ and $\xi_n\leq 2\xi_{h(n)}$ for all $n$.
\end{lemma}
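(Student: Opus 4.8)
The plan is to construct $\{\xi_n\}$ explicitly as the pointwise smallest sequence that is forced to dominate $\{\varepsilon_n\}$ once the constraint $\xi_n\le 2\xi_{h(n)}$ is taken seriously. The guiding observation is that this constraint propagates: writing $h^{[0]}(n)=n$ and $h^{[k+1]}(n)=h(h^{[k]}(n))$ for the iterates of $h$, the inequality $\xi_m\le 2\xi_{h(m)}$ applied repeatedly forces $\xi_{h^{[k]}(n)}\ge 2^{-k}\xi_n\ge 2^{-k}\varepsilon_n$, and then monotonicity of $\{\xi_n\}$ forces $\xi_j\ge 2^{-k}\varepsilon_n$ for every index $j\le h^{[k]}(n)$. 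Collecting all of these forced lower bounds leads me to define
\[
\xi_j=\sup\bigl\{\,2^{-k}\varepsilon_n : k\ge 0,\ n\ge 1,\ h^{[k]}(n)\ge j\,\bigr\},
\]
after which it remains only to verify the four required properties.

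Three of the four are essentially formal. Domination $\xi_j\ge\varepsilon_j$ comes from the admissible term $k=0$, $n=j$ (since $h^{[0]}(j)=j\ge j$). Monotonicity $\xi_j\ge\xi_{j+1}$ holds because the index set defining $\xi_{j+1}$ is contained in the one defining $\xi_j$, so passing from $j$ to $j+1$ can only shrink the supremum. For the lethargy bound $\xi_j\le 2\xi_{h(j)}$, I would take any admissible term $2^{-k}\varepsilon_n$ for $\xi_j$, so that $h^{[k]}(n)\ge j$; since $h$ is non-decreasing this gives $h^{[k+1]}(n)=h(h^{[k]}(n))\ge h(j)$, so $2^{-(k+1)}\varepsilon_n$ is an admissible term for $\xi_{h(j)}$, whence $\xi_{h(j)}\ge\tfrac12\,2^{-k}\varepsilon_n$. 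Taking the supremum over the admissible terms of $\xi_j$ then yields $\xi_{h(j)}\ge\tfrac12\xi_j$, as desired.

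The only real work, and the step I expect to be the main obstacle, is proving that $\xi_j$ is finite and that $\xi_j\searrow 0$; here the two competing sources of size, the factor $2^{-k}$ and the value $\varepsilon_n$, must be balanced against each other. Finiteness is immediate, since every admissible term is bounded by the largest value $\varepsilon_1$. For convergence, fix $\delta>0$. First choose $K$ with $2^{-K}\varepsilon_1<\delta$, so that every admissible term with $k\ge K$ is automatically smaller than $\delta$. For the finitely many remaining levels $k<K$, a term $2^{-k}\varepsilon_n$ can reach size $\ge\delta$ only when $\varepsilon_n\ge\delta$, that is, only for the finitely many indices $n<N_\delta:=\min\{n:\varepsilon_n<\delta\}$. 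Setting $J=1+\max\{h^{[k]}(n): k<K,\ n<N_\delta\}$ (a finite maximum), I claim that for every $j\ge J$ no such dangerous term satisfies $h^{[k]}(n)\ge j$: indeed, if $k<K$ and $n<N_\delta$ then $h^{[k]}(n)\le J-1<j$. Hence for $j\ge J$ every admissible term of $\xi_j$ is $<\delta$, so $\xi_j\le\delta$. As $\delta>0$ was arbitrary, $\xi_j\to 0$, and combined with the monotonicity already established this gives $\{\xi_n\}\searrow 0$, completing the verification.
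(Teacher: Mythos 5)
Your construction is correct: the supremum $\xi_j=\sup\{2^{-k}\varepsilon_n : h^{[k]}(n)\ge j\}$ is finite and nonempty (the pair $k=0$, $n=j$ is always admissible), and your verifications of domination, monotonicity, the bound $\xi_j\le 2\xi_{h(j)}$, and the convergence $\xi_j\to 0$ all go through; the only point worth a passing remark is the degenerate case $\varepsilon_1<\delta$, where your finite maximum is taken over an empty set, but then trivially $\xi_j\le\varepsilon_1<\delta$ for all $j$ anyway. Note that the paper itself gives no proof of this lemma, deferring entirely to \cite[Lemma 2.3]{almira_oikhberg}, so there is no in-paper argument to compare against; your explicit supremum over iterates of $h$ is essentially the standard construction used there (the cited proof takes, for each $k$, the largest admissible $\varepsilon_n$, which by monotonicity of $\{\varepsilon_n\}$ yields the same sequence), and your write-up has the advantage of being self-contained, with the convergence step --- splitting the admissible pairs into those with $k\ge K$ and the finitely many dangerous ones with $k<K$ and $n<N_\delta$ --- spelled out in full.
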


\begin{proof} See \cite[Lemma 2.3]{almira_oikhberg}.
\end{proof}

\begin{proof}[Proof of Theorem \ref{ST_fail}] $(i)\Rightarrow (ii)$.  Assume that $\{A_n\}$ fails Shapiro's theorem in $X$. Then there exists $\{\varepsilon_n\}\searrow   0$ such that, for every $x\in X$ there is a constant $C(x)>0$ such that $E(x,A_n)\leq C(x)\varepsilon_n$ for $n=0,1,\ldots$.  It follows from Lemma \ref{sucesiones} that we may assume, without loss of generality,

$\varepsilon_n\leq 2\varepsilon_{K(n+1)-1}$ for all $n\in\mathbb{N}$.

Obviously $X=\bigcup_n\Gamma_n$, where $\Gamma_{\alpha}=\{x\in X:  E(x,A_n)\leq \alpha \varepsilon_n$ for all $n\in \mathbb{N}\}$.  The sets $\Gamma_n$ are closed subsets of $X$, so that Baire's category theorem implies that  $\Gamma_{m_0}$ contains an open ball $B(x_0,r_0)$ for a certain $m_0\in \mathbb{N}$. Furthermore, it is easy to check that the sets $\Gamma_{\alpha}$ satisfy the symmetry condition $\Gamma_{\alpha}=-\Gamma_{\alpha}$, so that $B(-x_0,r_0)\subseteq \Gamma_{m_0}$. Let $x,y\in \Gamma_{m_0}$. Then
\begin{eqnarray*}
E(\frac{x+y}{2},A_{K(n)}) &\leq &  E(\frac{x}{2},A_{n})+E(\frac{y}{2},A_{n}) \text{ (since } A_n+A_n\subseteq A_{K(n)})\\
&\leq &  E(x,A_{n})+E(y,A_{n})  \text{ (since } d \text{ is non-decreasing)}\\
&\leq & 2m_0 \varepsilon_n  \ (n=0,1,\ldots)
\end{eqnarray*}
Let us now take $j\in\mathbb{N}$ be an (arbitrary) natural number. Then there exists a unique $n\in\mathbb{N}$ such that $K(n)\leq j\leq K(n+1)-1$. Hence
\[
E(\frac{x+y}{2},A_{j}) \leq E(\frac{x+y}{2},A_{K(n)}) \leq 2m_0 \varepsilon_n \leq 2Cm_0 \varepsilon_{K(n+1)-1}\leq  4m_0 \varepsilon_j,
\]
so that $\frac{x+y}{2}\in\Gamma_{4m_0}$.
It follows that
\[
B(0,r_0)\subseteq \frac{1}{2}(B(x_0,r_0)+B(-x_0,r_0)) \subset \Gamma_{4m_0}.
\]
Hence $\{A_n\}$ fails Shapiro's theorem uniformly on $B(0,r_0)$.

$(ii)\Rightarrow (i)$. If  $\{A_n\}$ fails Shapiro's theorem uniformly on $B(0,r_0)$, then there exists $\{\varepsilon_n\}\searrow   0$  such that, $E(x,A_n)\leq  \varepsilon_n$ for all $n\in\mathbb{N}$ and all $x\in B(0,r_0)$. On the other hand,  the balls are absorbing subsets of $X$, so that, for any $x\in X$ there exists $\lambda>0$ such that $x\in\lambda B(0,r_0)$. Then $x=\lambda y$ for some $y\in B(0,r_0)$ and
\[
E(x,A_n)=E(\lambda y,A_n)\leq ([\lambda]+1) E(y,A_n)\leq ([\lambda]+1)  \varepsilon_n, \text{ for all } n\in\mathbb{N}.
\]
\end{proof}

\begin{remark} It easily  follows from the proof of  $(i)\Rightarrow (ii)$ in Theorem \ref{ST_fail} that this implication holds true as soon as the metric $d$ satisfies $d(\frac{x}{2},0)\leq d(x,0)$ for all $x\in X$.  Analogously, the implication  $(ii)\Rightarrow (i)$ holds true as long as $d$ satisfies  $f(\lambda)=\sup_{x\neq 0}\frac{d(\lambda x,0)}{d(x,0)}<\infty$ for all $\lambda\in\mathbb{R}$.
\end{remark}

\begin{remark}  Theorem \ref{ST_fail} generalizes Theorem  \ref{teo_introd} to $F$-spaces since, in the case of (quasi-) Banach spaces we have that $E(rx,A_n)=|r|^pE(x,A_n)$ (with $p=1$ for the Banach setting), so that $E(S(X),A_n)=E(B(0,1),A_n)$ and $E(B(0,r),A_n)=r^pE(B(0,1),A_n)$ for all $n\in\mathbb{N}$ and all $r>0$. In particular, this implies that $\inf_{n\in\mathbb{N}}E(B(0,r),A_n)>0$ for all $r>0$ if and only if $\inf_{n\in\mathbb{N}}E(S(X),A_n)>0$.
\end{remark}

We use Theorem \ref{ST_fail} to prove the following important result:

\begin{proposition}\label{equiv} Let $d_1,d_2$ be two equivalent metrics over the same metric vector space $X$ and let us assume that $\{A_n\}$ is an approximation scheme on $X$. Then the following are equivalent claims:
\begin{itemize}
\item[$(i)$] There exists $r_1>0$ such that  $\{A_n\}$ fails Shapiro's theorem uniformly on the ball $B_{d_1}(0,r_1)$.
\item[$(ii)$] There exists $r_2>0$ such that  $\{A_n\}$ fails Shapiro's theorem uniformly on the ball $B_{d_2}(0,r_2)$.
\end{itemize}
Consequently, if $d_1,d_2$ are both non decreasing equivalent metrics defining an $F$-space $X$, then $\{A_n\}$ satisfies Shapiro's theorem with respect to $d_1$ if and only if $\{A_n\}$ satisfies Shapiro's theorem with respect to $d_2$.
\end{proposition}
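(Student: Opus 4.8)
The plan is to prove the equivalence $(i)\Leftrightarrow(ii)$ directly, after which the ``consequently'' clause drops out of Theorem~\ref{ST_fail}; and since $d_1$ and $d_2$ play symmetric roles it suffices to establish $(i)\Rightarrow(ii)$. Throughout I write $E_{d_j}(x,A_n)=\inf_{a\in A_n}d_j(x,a)$ to keep the two error functionals apart. Assuming $(i)$, I have $r_1>0$ and $\{\varepsilon_n\}\searrow 0$ with $E_{d_1}(x,A_n)\le \varepsilon_n$ for every $x\in B_{d_1}(0,r_1)$ and every $n$. The first step uses only that $d_1$ and $d_2$ generate the same topology: the closed ball $B_{d_1}(0,r_1)$ contains the open $d_1$-ball about $0$, so it is a neighbourhood of $0$ in the common topology, and therefore there is $r_2>0$ with $B_{d_2}(0,r_2)\subseteq B_{d_1}(0,r_1)$.

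The conceptual heart of the matter, and what makes the $F$-space case genuinely more delicate than the normed one, is that there need be no global comparison between $d_1$ and $d_2$; the only link is continuity of the identity at $0$. I would encode this by the modulus $\omega(\delta):=\sup\{\,d_2(z,0):d_1(z,0)\le \delta\,\}$, which is non-decreasing and, because the identity map $(X,d_1)\to(X,d_2)$ is continuous hence continuous at $0$, satisfies $\omega(\delta)\to 0$ as $\delta\to 0^{+}$. Now fix $x\in B_{d_2}(0,r_2)\subseteq B_{d_1}(0,r_1)$; for any $\eta>0$ choose $a\in A_n$ with $d_1(x,a)\le E_{d_1}(x,A_n)+\eta\le \varepsilon_n+\eta$. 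Translation invariance (the standing assumption on the metrics) gives $d_2(x,a)=d_2(x-a,0)\le \omega(\varepsilon_n+\eta)$, so $E_{d_2}(x,A_n)\le \omega(\varepsilon_n+\eta)$ for every $x$ and every $\eta>0$. Setting $\delta_n:=\sup_{x\in B_{d_2}(0,r_2)}E_{d_2}(x,A_n)$, the inclusion $A_n\subseteq A_{n+1}$ makes $\{\delta_n\}$ non-increasing, while taking the infimum over $\eta$ gives $\delta_n\le \inf_{\eta>0}\omega(\varepsilon_n+\eta)$, which tends to $0$ because $\varepsilon_n\to 0$ and $\omega(\delta)\to 0$. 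Thus $\{\delta_n\}\searrow 0$ and $E_{d_2}(x,A_n)\le \delta_n$ on $B_{d_2}(0,r_2)$, which is exactly $(ii)$.

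The step I expect to need the most care is ensuring $\delta_n$ is \emph{finite}, since $\omega$ may well be infinite away from $0$ (for instance with $d_1$ the usual metric on $\R$ and $d_2=\min(d_1,1)$ the reverse modulus is infinite for $\delta\ge 1$), so one cannot simply write $\delta_n\le\omega(\varepsilon_n)<\infty$ for the small indices where $\varepsilon_n$ is large. This is dispatched cheaply: condition $(A2)$ with $\lambda=0$ yields $0\in A_n$, whence $E_{d_2}(x,A_n)\le d_2(x,0)\le r_2$ for $x\in B_{d_2}(0,r_2)$, so $\delta_n\le r_2$ for all $n$ and the only genuine point left is the convergence $\delta_n\to 0$ already supplied by the modulus. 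I would emphasise that no ``non-decreasing'' hypothesis on the metrics enters this equivalence: only translation invariance, topological equivalence, and $0\in A_n\subseteq A_{n+1}$ are used.

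For the final ``consequently'' assertion, both $d_1$ and $d_2$ are now assumed non-decreasing and complete, so Theorem~\ref{ST_fail} applies to each: $\{A_n\}$ satisfies Shapiro's theorem with respect to $d_j$ if and only if it does \emph{not} fail uniformly on any ball $B_{d_j}(0,r)$, i.e.\ if and only if the statement analogous to $(i)$ (respectively $(ii)$) is false. Since the equivalence $(i)\Leftrightarrow(ii)$ just established shows these two failure conditions coincide, their negations coincide too, and satisfaction of Shapiro's theorem therefore transfers between $d_1$ and $d_2$.
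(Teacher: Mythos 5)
Your argument is correct and follows essentially the same route as the paper's: you shrink to a $d_2$-ball inside the given $d_1$-ball and transfer the errors through the modulus of continuity of the identity at the origin, which is exactly the paper's function $\varphi^*(r)=\inf\{s>0: B_{d_1}(0,r)\subseteq B_{d_2}(0,s)\}$ in different clothing. Your use of near-minimizers $d_1(x,a)\le E_{d_1}(x,A_n)+\eta$ and your explicit checks that the new majorizing sequence is finite and non-increasing are slightly tidier than the paper's version (which writes $d_1(x,a_n)\le 2E_{d_1}(x,A_n)$ and leaves those points implicit), but the substance is identical.
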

\begin{remark} An important result by Klee \cite{klee} (see also \cite[Theorem 1.4.4.]{rolewicz}) guarantees that, if $(X,d)$ is an $F$-space and $d'$ is a metric which is equivalent to $d$ and translation invariant, then $(X,d')$ is also an $F$-space (i.e., $X$ is complete with respect to $d'$). This may be not the case if the metric $d'$ is not translation invariant! Fortunately all metrics in this paper are assumed to be translation invariant.
\end{remark}
\begin{proof}[Proof of Proposition \ref{equiv}]  By definition, $d_1,d_2$ are equivalent metrics in $X$ if there exists functions $\varphi,\phi:(0,\infty)\to (0,\infty)$ such that
\begin{eqnarray*}
&\ & B_{d_2}(0,\varphi(r))\subseteq B_{d_1}(0,r) \\
&\ & B_{d_1}(0,\phi(r))\subseteq B_{d_2}(0,r) \\
\end{eqnarray*}
for all $r>0$. Take $\tau_0=\varphi(1)$ and $0<r<\tau_0$. Then
\begin{eqnarray*}
&\ & B_{d_2}(0,r)\subseteq B_{d_2}(0,\tau_0)\subseteq B_{d_1}(0,1)
\end{eqnarray*}
and there exists
\[
\phi^*(r):=\inf\{s>0: B_{d_2}(0,r)\subseteq B_{d_1}(0,s)\}.
\]
In particular, $B_{d_2}(0,r)\subseteq B_{d_1}(0,\phi^*(r))$. Analogously, for $r<\tau_1=\phi(1)$ the function
\[
\varphi^*(r):=\inf\{s>0: B_{d_1}(0,r)\subseteq B_{d_2}(0,s)\}
\]
is well defined and satisfies $B_{d_1}(0,r)\subseteq B_{d_2}(0,\varphi^*(r))$.


Let us prove that $\lim_{r\to 0} \varphi^*(r)=0$. Obviously, $\varphi^*$ is an increasing function, since $\delta_1<\delta_2$ implies $B_{d_1}(0,\delta_1)\subseteq B_{d_1}(0,\delta_2)$, so that
$\lim_{r\to 0} \varphi^*(r)= \inf_{r> 0} \varphi^*(r)=\rho\geq 0$. Assume that $\rho>0$. Then
\[
B_{d_1}(0,\phi(\frac{\rho}{2}))\subseteq B_{d_2}(0,\frac{\rho}{2}),
\]
and
\[
\rho \leq \varphi^*(\phi(\frac{\rho}{2}))=\inf\{s:  B_{d_1}(0,\phi(\frac{\rho}{2})) \subseteq B_{d_2}(0,s)\}\leq \frac{\rho}{2},
\]
which is impossible. It follows that $\lim_{r\to 0} \varphi^*(r)=0$.
Assume $(i)$. Let $\{\varepsilon_n\}\searrow 0$ and $r_1$ be such that
\[
E_{d_1}(x,A_n)\leq \varepsilon_n \text{ for all } x\in B_{d_1}(0,r_1) \text{ and all } n\in\mathbb{N}.
\]
Let $r_2=\varphi(r_1)>0$ and let $x\in B_{d_2}(0,r_2)\subseteq B_{d_1}(0,r_1)$. For each $n\in\mathbb{N}$ there exists $a_n\in A_n$ such that
\[
d_1(x,a_n)=d_1(x-a_n,0)\leq 2 E_{d_1}(x,A_n)\leq 2\varepsilon_n.
\]
In other words, we have that $x-a_n\in B_{d_1}(0,2\varepsilon_n) \subseteq  B_{d_2}(0,\varphi^*(2\varepsilon_n))$, so that $E_{d_2}(x,A_n)\leq \varphi^*(2\varepsilon_n)$. Now, the sequence $\{\varphi^*(2\varepsilon_n)\}$ is decreasing, converges to zero and does not depend on $x\in B_{d_2}(0,r_2)$. This ends the proof of $(i)\Rightarrow (ii)$. The implication $(ii)\Rightarrow (i)$ follows with the very same arguments.

The last part of this proposition follows as an easy corollary of the first part and Theorem  \ref{ST_fail}.
\end{proof}

Theorem \ref{ST_fail} characterizes approximation schemes satisfying Shapiro's theorem on (a large class of) $F$-spaces. Now, a natural question is if this characterization is useful for studying some concrete examples (otherwise, it would be a nice but inapplicable result). Fortunately, the theorem can be used for some classical cases. In particular, if we perform extra computations, which lead to a generalization of Riesz's lemma for the $F$-spaces setting that was proved by Albinus \cite{albinus}, and we can characterize non-trivial linear approximation schemes satisfying Shapiro's theorem:

\begin{theorem}[Albinus] \label{albinus_th}Let $(X,d)$ be an $F$-space with non decreasing metric $d$ and let $\{A_n\}$ be a non trivial linear approximation scheme on $X$. Then $\{A_n\}$ satisfies Shapiro's theorem on $X$ if and only if $\inf_{n\in\mathbb{N}}E(X,A_n)>0$.
\end{theorem}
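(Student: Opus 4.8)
The plan is to deduce the statement from Theorem~\ref{ST_fail}, which already characterizes Shapiro's theorem on $X$ through the condition $\inf_{n}E(B(0,r),A_n)>0$ for every $r>0$. It therefore suffices to prove the equivalence
\[
\inf_{n}E(X,A_n)>0 \quad\Longleftrightarrow\quad \inf_{n}E(B(0,r),A_n)>0 \ \text{ for all } r>0 .
\]
One implication is immediate: since $B(0,1)\subseteq X$ we always have $E(X,A_n)\ge E(B(0,1),A_n)$, so if Shapiro's theorem holds then, by Theorem~\ref{ST_fail}, $\inf_n E(B(0,1),A_n)>0$, whence $\inf_n E(X,A_n)>0$. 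The whole content of the theorem is thus the reverse implication: assuming $c:=\inf_n E(X,A_n)>0$, I must produce, for each fixed $r>0$, a positive lower bound for $E(B(0,r),A_n)$ that is uniform in $n$.

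First I would fix $r>0$, set $\theta=\tfrac12$ and $\delta:=\min\{r/2,\,c/2\}>0$, and work with a single (arbitrary) index $n$. Since $E(X,A_n)=\sup_{x\in X}E(x,A_n)\ge c>\delta$, there is some $z\in X$ with $E(z,A_n)>\delta$; in particular $z\notin\overline{A_n}$. The map $t\mapsto E(tz,A_n)$ is continuous (the functional $E(\cdot,A_n)$ is $1$-Lipschitz for $d$ and scalar multiplication is continuous), vanishes at $t=0$ because $0\in A_n$, and exceeds $\delta$ at $t=1$; by the intermediate value theorem there is $t_0\in(0,1)$ with $x_0:=t_0z$ satisfying $E(x_0,A_n)=\delta$. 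This calibration step, tuning the distance to the subspace to a prescribed value independent of $n$, is what replaces the normalization used in the classical Riesz argument.

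Next I would run the Riesz construction in its translation form, which is precisely the generalization of Riesz's lemma valid in the $F$-space setting. Since $\delta=E(x_0,A_n)=\inf_{a\in A_n}d(x_0,a)$ and $\delta/\theta>\delta$, the definition of the infimum yields $y_0\in A_n$ with $d(x_0,y_0)<\delta/\theta=2\delta$. Put $w:=x_0-y_0$. Using translation invariance of $d$ together with the linearity of $A_n$ (so that $a\mapsto a+y_0$ permutes $A_n$) one checks $E(w,A_n)=E(x_0,A_n)=\delta$, while $d(w,0)=d(x_0,y_0)<2\delta\le r$ forces $w\in B(0,r)$. Consequently $E(B(0,r),A_n)\ge E(w,A_n)=\delta=\min\{r/2,c/2\}$ for every $n$, so $\inf_n E(B(0,r),A_n)\ge\delta>0$. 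As $r>0$ was arbitrary, Theorem~\ref{ST_fail} gives that $\{A_n\}$ satisfies Shapiro's theorem on $X$.

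The main obstacle is exactly the absence of homogeneity of the metric: one cannot simply rescale a far-away point into a small ball while keeping its distance to $A_n$ under control, as one does in a normed space. The two devices above resolve this, namely the intermediate value theorem lets me prescribe the exact value $\delta$ of the distance to $A_n$ (so that the bound is uniform in $n$), and the translation form of Riesz's lemma (subtracting a near-best approximant instead of normalizing) keeps $w$ inside $B(0,r)$ while preserving $E(w,A_n)=\delta$. The hypothesis that $d$ is non-decreasing is not used in this construction itself; it enters only through the invocation of Theorem~\ref{ST_fail}.
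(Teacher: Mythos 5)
Your proof is correct and follows essentially the same route as the paper: reduce to Theorem~\ref{ST_fail} and then compare $E(B(0,r),A_n)$ with $E(X,A_n)$ via a Riesz-type argument resting on the continuity of $t\mapsto E(tz,A_n)$, the intermediate value theorem, and translation by a near-best approximant from the subspace. The only difference is cosmetic: the paper packages the comparison as the exact Albinus--Riesz identity $E(B(0,r),M)=\min\{r,E(X,M)\}$, whereas you establish the one-sided bound $E(B(0,r),A_n)\geq\min\{r/2,c/2\}$ directly, which indeed suffices.
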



\begin{lemma}[Albinus-Riesz Lemma] Let $(X,d)$ be a metric vector space, $M$ a  vector subspace of $X$, and $r>0$. Then
\[
E(B(0,r),M)=\min\{r,E(X,M)\}.
\]
\end{lemma}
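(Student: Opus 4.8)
The plan is to prove the two inequalities separately. For the easy direction $E(B(0,r),M)\le \min\{r,E(X,M)\}$, I would observe that for every $b\in B(0,r)$ one has, using $0\in M$ together with $d(b,0)\le r$, that $E(b,M)=\inf_{m\in M}d(b,m)\le d(b,0)\le r$; since trivially also $E(b,M)\le E(X,M)$, taking the supremum over $b\in B(0,r)$ yields $E(B(0,r),M)\le\min\{r,E(X,M)\}$ at once.

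The substantial direction is $E(B(0,r),M)\ge \min\{r,E(X,M)\}$, and I would establish it by showing that every value $v$ with $0\le v<\min\{r,E(X,M)\}$ is attained as $E(b,M)$ for some $b\in B(0,r)$. Two observations drive this. First, translation invariance of $d$ and the subspace property of $M$ give $E(x+m,M)=E(x,M)$ for all $m\in M$; consequently, whenever $E(x,M)<r$ the defining infimum furnishes some $m\in M$ with $d(x,m)<r$, so that $b:=x-m$ satisfies $d(b,0)=d(x,m)<r$ and $E(b,M)=E(x,M)$. This is the translation trick that relocates a point into the ball without changing its error. Second, since $v<E(X,M)$ there exists $x\in X$ with $E(x,M)>v$, and I would run an intermediate value argument along the segment $t\mapsto tx$, $t\in[0,1]$.

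The key technical point is the continuity of $g(t):=E(tx,M)$ on $[0,1]$. Applying the triangle inequality inside the infimum gives $|g(t)-g(s)|\le d(tx,sx)=d((t-s)x,0)$, and continuity of scalar multiplication in a topological vector space forces the right-hand side to $0$ as $t\to s$; hence $g$ is continuous with $g(0)=0$ and $g(1)=E(x,M)>v$. The intermediate value theorem then produces $t^*\in[0,1]$ with $E(t^*x,M)=v<r$, and the translation trick above moves $t^*x$ into $B(0,r)$ with unchanged error, so $E(B(0,r),M)\ge v$. Letting $v\uparrow\min\{r,E(X,M)\}$ finishes the argument; the formula also covers the case $E(X,M)=\infty$, where $\min\{r,E(X,M)\}=r$.

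The main obstacle is conceptual rather than computational: in a metric vector space the metric need not be homogeneous, so one cannot simply dilate a far-away point onto the sphere $S(0,r)$. The argument circumvents this by decoupling the two degrees of freedom — scaling, via the intermediate value theorem for $t\mapsto E(tx,M)$, is used only to dial the error down to the target value $v<r$, while an $M$-translation, which leaves the error invariant, is used to place the resulting point inside $B(0,r)$. Verifying the continuity of $g$ in the absence of any scaling behaviour of $d$ is the step that requires the most care.
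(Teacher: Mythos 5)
Your proof is correct and follows essentially the same route as the paper's: both rest on the continuity of $t\mapsto E(tx,M)$, an intermediate-value argument along the segment $tx$, and the observation that subtracting a suitable element of $M$ relocates a point into the ball without changing its distance to $M$. The only difference is organizational: you argue directly that every $v<\min\{r,E(X,M)\}$ is attained on $B(0,r)$, whereas the paper reaches the same conclusion by contradiction via an auxiliary sequence $\{\tau_n\}$ with $E(\tau_n x,M)\nearrow s$.
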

\begin{proof} The inequality $E(B(0,r),M)\leq E(X,M)$ is obvious since $B(0,r)\subseteq X$. Moreover, $E(B(0,r),M)\leq r$ because $0\in M$ implies that $E(x,M)\leq d(x,0)$ for all $x$. This proves $E(B(0,r),M)\leq \min\{r,E(X,M)\}$. To prove the other inequality we only need to check, if $E(B(0,r),M)<r$, then $E(B(0,r),M) = E(X,M)$ .

Let us assume that $E(B(0,r),M)<s<r$. If $B(0,s)+M\neq X$ there exists $x\in X$ such that $E(x,M) >s$. Define the function $\varphi(t)=E(tx,M)$. It is easy to prove that $\varphi$ is continuous and $\varphi(0)=0$, $\varphi(1)>s$, so that there exists $\tau\in (0,1)$ such that $\varphi(\tau)=s$. What is more we can take a sequence $\{\tau_n\}\subseteq [0,1]$ such that $\{\varphi(\tau_n)\}$ is increasing and converges to $s$ (this is so because $\phi(0)=0<s$).  Let $n\in\mathbb{N}$ and set $z_n=\tau_nx$. Then $E(z_n,M)<s$, which implies that there exists $m_n\in M$ such that $z_n=m_n+a_n$ with $d(a_n,0)<s$. It follows that  $E(a_n,M)=E(z_n,M)<s$ (since $M$ is a vector space) and $a_n\in B(0,s)$. Hence $E(z_n,M)\leq E(B(0,s),M)\leq E(B(0,r),M)<s<r$ for all $n$. On the other hand,
\[
E(B(0,r),M)<s=\lim_{n\to\infty}E(z_n,M)\leq E(B(0,r),M),
\]
which is impossible. This proves that $B(0,s)+M =X$, so that $E(X,M)\leq s$. Hence, if $E(B(0,r),M)<r$, then $E(B(0,r),M) = E(X,M)$, which is what we wanted to prove.
\end{proof}

\begin{proof}[Proof of Albinus's theorem]  Theorem \ref{ST_fail}  guarantees  that $\{A_n\}$ fails Shapiro's theorem if and only if there exists $r_0>0$ such that $\{E(B(0,r_0),A_n)\}\searrow 0$. On the other hand,  Albinus-Riesz's lemma claims that $E(B(0,r_0),A_{n})=\min\{r_0,E(X,A_{n})\}$  for all $n$. Hence $\{A_n\}$ fails Shapiro's theorem if and only if $\lim_{n\to\infty}E(X,A_n)=0$. In other words,  $\{A_n\}$ satisfies Shapiro's theorem if and only if $\inf_{n\in\mathbb{N}}E(X,A_n)>0$.
\end{proof}

\begin{definition} Let $(X,d)$ be a linear metric space and let $\|x\|=d(x,0)$. Let $V\subseteq X$ be a linear subspace of $X$. We define the radius of $V$ as
$$R_{\|\cdot\|}(V)=\inf_{v\in V\setminus\{0\}}\sup_{t>0}\|tv\|$$
(nothing that, this radius can be infinity).  We say that $(X,d)$ contains short lines if $R_{\|\cdot\|}(X)=0$.
\end{definition}

\begin{proposition} Let us assume that $X$ is an $F$-space with non-decreasing metric and $\{ X_n\}$ is a nontrivial linear approximation scheme on $X$ such that $\dim X_n<\infty$ for all $n$. If
\[
R_{\|\cdot\|}(\bigcup_nX_n)>0,
\]
then $\inf_nE(X,X_n)>0$ and $\{X_n\}$ satisfies  Shapiro's theorem.
\end{proposition}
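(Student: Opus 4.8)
The plan is to reduce the statement to the single inequality $\inf_n E(X,X_n)>0$: once this is established, Albinus's theorem (Theorem \ref{albinus_th}) immediately yields that $\{X_n\}$ satisfies Shapiro's theorem, since $X$ is an $F$-space with non-decreasing metric and $\{X_n\}$ is a non-trivial linear approximation scheme. Write $\|x\|=d(x,0)$, $W=\bigcup_n X_n$ and $R=R_{\|\cdot\|}(W)>0$. I will in fact prove the uniform bound $E(X,X_n)\ge R$ for every $n$; since $E(X,X_n)=\sup_{x\in X}E(x,X_n)\ge \sup_{t>0}E(tv,X_n)$ for any fixed $v\in X$, it suffices to exhibit, for each $n$, a vector $v$ with $\sup_{t>0}E(tv,X_n)\ge R$. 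The natural candidate is any $v\in X_{n+1}\setminus X_n$, which is nonzero and lies in $W$ (here I use that all inclusions of the scheme are strict and that $\dim X_n<\infty$, so that $X_n$ is closed).

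The key technical ingredient is a compactness lemma: if $Y\subseteq W$ is finite dimensional and $0<\delta<R$, then the $d$-ball $\{y\in Y:\|y\|\le\delta\}$ is bounded in the (unique Hausdorff) vector topology of $Y$, hence relatively compact. To prove it I would argue by contradiction: identifying $Y$ with $(\mathbb{R}^{d},|\cdot|)$, suppose there were $y_k\in Y$ with $\|y_k\|\le\delta$ and $|y_k|\to\infty$. Passing to a subsequence, $u_k=y_k/|y_k|\to u$ with $|u|=1$, so $u\in Y\subseteq W$ is nonzero and $\sup_{t>0}\|tu\|\ge R>\delta$; fix $T$ with $\|Tu\|>\delta$. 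For $k$ large enough that $|y_k|\ge T$, the non-decreasing character of $d$ gives $\|Tu_k\|=\|\tfrac{T}{|y_k|}y_k\|\le\|y_k\|\le\delta$, while continuity of $\|\cdot\|$ forces $\|Tu_k\|\to\|Tu\|>\delta$, a contradiction.

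With this lemma in hand, the main argument is a rescaling trick. Suppose, for contradiction, that $\sup_{t>0}E(tv,X_n)=\delta<R$ for $v\in X_{n+1}\setminus X_n$, and choose $\delta<\delta'<R$. For each $t>0$ pick $a_t\in X_n$ with $\|tv-a_t\|<\delta'$ (possible since $E(tv,X_n)\le\delta<\delta'$), and set $c_t=tv-a_t\in X_{n+1}$. Then $\|c_t\|<\delta'<R$, so by the compactness lemma applied to $Y=X_{n+1}$ the family $\{c_t\}$ is bounded in the Euclidean structure of $X_{n+1}$, say $|c_t|\le C$. Consequently $a_t/t=v-c_t/t$ and $|v-a_t/t|=|c_t|/t\le C/t\to 0$ as $t\to\infty$, so $a_t/t\to v$ in $X_{n+1}$. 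Since each $a_t/t\in X_n$ and $X_n$ is closed, this gives $v\in X_n$, contradicting $v\notin X_n$. Hence $\sup_{t>0}E(tv,X_n)\ge R$, so $E(X,X_n)\ge R$ for all $n$ and $\inf_n E(X,X_n)\ge R>0$, which via Theorem \ref{albinus_th} completes the proof.

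I expect the main obstacle to be the compactness lemma, specifically the recognition that positivity of the radius is exactly what upgrades $d$-boundedness to genuine vector-topology boundedness on finite dimensional subspaces of $W$, and that the non-decreasing property of $d$ together with compactness of the Euclidean sphere is the right mechanism to establish it. The subsequent rescaling $a_t/t\to v$ is then a short step, but it relies crucially on this boundedness to kill the error term $c_t/t$.
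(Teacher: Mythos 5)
Your argument is correct, and it reaches the conclusion by a genuinely different route than the paper. The paper proves the contrapositive: assuming $\inf_n E(X,X_n)=0$, it fixes $\varepsilon>0$, chooses $n$ with $E(X,X_n)<\varepsilon$, works inside the finite-dimensional space $Y=X_n+\mathbf{span}\{a\}$ for some $a\in\bigcup_k X_k\setminus X_n$, decomposes $ma=x_m+h_m$ with $x_m\in X_n$ and $h_m\in B(0,\varepsilon)$, notes that $\{h_m\}$ is necessarily unbounded in any norm on $Y$, and normalizes to extract a nonzero limit direction $\omega$ with $\sup_{t>0}\|t\omega\|\leq\varepsilon$, whence $R_{\|\cdot\|}(\bigcup_n X_n)=0$. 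You instead prove the direct quantitative inequality $E(X,X_n)\geq R$ for every $n$: you first isolate the observation that positivity of the radius upgrades $d$-boundedness to Euclidean boundedness on finite-dimensional subspaces of $\bigcup_n X_n$ (your compactness lemma), and then run the rescaling $a_t/t\to v$ to force $v\in X_{n+1}\setminus X_n$ into the closed subspace $X_n$. The two proofs share the same engine --- normalizing a $d$-bounded but Euclidean-unbounded sequence in a finite-dimensional subspace and exploiting the non-decreasing metric together with compactness of the Euclidean sphere to manufacture a short line --- but the paper uses the unboundedness of the remainders constructively to build the short line, whereas you use their boundedness (guaranteed by $R>0$) to derive a contradiction. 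What your organization buys is a reusable lemma and the explicit bound $\inf_n E(X,X_n)\geq R_{\|\cdot\|}(\bigcup_n X_n)$, which is slightly more informative than the stated positivity; the paper's version is marginally shorter because it needs no separate lemma.
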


\begin{proof}

We prove that, if $\inf_nE(X,X_n)=0$, then $R_{\|\cdot\|}(\bigcup_nX_n)=0$. Let us assume that $\varepsilon_n=E(X,X_n)$ satisfies $\lim_{n\to\infty}\varepsilon_n=0$. Given $\varepsilon>0$, we take $n$ such that $\varepsilon_n<\varepsilon$. Let $a\in\bigcup_{k=0}^{\infty}X_k$ be such that $a\not\in X_n$, and let us consider the vector space $Y=X_n+\mathbf{span}\{a\}\subseteq \bigcup_{k=0}^{\infty}X_k$. By hypothesis, $X_n+B(\varepsilon)=X$, since $X_n+B(\varepsilon_n)=X$ and $\varepsilon_n<\varepsilon$. It follows that $Y=X_n+(B(\varepsilon)\cap Y)$, since $X_n$ is a vector subspace of $Y$, so that, if $y=x+h\in Y$ with $x\in X_n$ and
$h\in B(\varepsilon)$, then $h=y-x\in Y$.  Take $m\in\mathbb{N}$ and consider the vector $ma\in Y$. Then $ma=x_m+h_m$ with $x_m\in X_n$ and $h_m\in B(\varepsilon)$.  We can consider, over $Y$, a norm $\|\cdot \|_{\#}$ defining the same topology as $\|\cdot\|$, since $\dim Y<\infty $ (see \cite[Page 16]{rudin}). Furthermore, the sequence $\{h_m\}_{m=0}^{\infty}$ cannot be bounded with respect to $\|\cdot\|_{\#}$, since all norms over $Y$ are equivalent norms and $m\to\infty$ (to prove this assert just take into account that, if $\|\cdot\|_n$ is any norm over $X_n$, then $\|x_n+\lambda a\|^*=\|x_n\|_n+|\lambda|$ defines a norm on $Y$ and $\{h_m\}_{m=0}^{\infty}$ is unbounded with respect to this norm). We may, then, assume that
$\|h_m\|_{\#}>1$ for all $m\geq m_0$ and consider the new sequence $\{\|h_m\|_{\#}^{-1}h_m\}_{m=m_0}^{\infty}$, which is bounded with respect to the norm $\|\cdot\|_{\#}$. This implies that there exists a converging subsequence. Thus, we may assume with no loss of generality that  $\{\|h_m\|_{\#}^{-1}h_m\}_{m=m_0}^{\infty}$ converges. Let
$\omega = \lim_{m\to\infty}\|h_m\|_{\#}^{-1}h_m$. Then $\omega \neq 0$ since it belongs to the unit sphere of $Y$ with respect to the norm $\|\cdot\|_{\#}$, and  $\omega \in B(\varepsilon)\cap Y$, since $\|\cdot\|$ is non-decreasing and $\|h_m\|_{\#}^{-1}<1$ for all $m\geq m_0$. Furthermore,  $\|t\omega\|\leq \varepsilon$ for all $t\in\mathbb{R}$ since, given $t\neq 0$, there exists $m_0(t)\in\mathbb{N}$ such that  $|t|\|h_m\|_{\#}^{-1}<1$ for all $m\geq m_0(t)$, so that
\[
\|t\omega\| = \lim_{m\to\infty}\|t\|h_m\|_{\#}^{-1}h_m\|=  \lim_{m\to\infty}|t|\|h_m\|_{\#}^{-1}\|h_m\|\leq \varepsilon.
\]
This proves that $R_{\|\cdot\|}(\bigcup_nX_n)=0$, since $\varepsilon$ was arbitrary.
\end{proof}

The following result shows a simple sufficient condition for an approximation scheme $\{A_n\}$ to satisfy Shapiro's theorem on $F$-spaces.

\begin{proposition} \label{proponueva} Let $(X,d)$ be an $F$-space with non decreasing metric $d$ and let $\{A_n\}$ be an approximation scheme on $X$. If there exist $\mathbb{N}_0\subseteq \mathbb{N}$ an infinite sequence of natural numbers,
$\{x_n\}_{n\in\mathbb{N}_0}$ a bounded subset of $X$ and $c>0$ such that, for all $n\in\mathbb{N}_0$ we have that
\[
c\leq E(x_n,A_n),
\]
then $\{A_n\}$ satisfies Shapiro's theorem on $X$.
\end{proposition}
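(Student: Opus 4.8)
The plan is to argue by contradiction, reducing the statement to the characterization of failure of Shapiro's theorem supplied by Theorem \ref{ST_fail}. Suppose that $\{A_n\}$ does \emph{not} satisfy Shapiro's theorem on $X$; that is, $\{A_n\}$ fails Shapiro's theorem in $X$. Then the implication $(i)\Rightarrow(ii)$ of Theorem \ref{ST_fail} produces a radius $r_0>0$ such that $\{A_n\}$ fails Shapiro's theorem uniformly on $B(0,r_0)$, which by definition means that there is a sequence $\{\varepsilon_n\}\searrow 0$ with $E(x,A_n)\leq \varepsilon_n$ for every $x\in B(0,r_0)$ and every $n\in\mathbb{N}$. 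This is the single hypothesis I intend to contradict.

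The next step is to exploit the boundedness of $\{x_n\}_{n\in\mathbb{N}_0}$. By the definition of a bounded set, applied to the radius $r_0$, there exists a scalar $\lambda>0$ with $\{x_n\}_{n\in\mathbb{N}_0}\subseteq \lambda B(0,r_0)$; hence each $x_n$ can be written as $x_n=\lambda y_n$ for some $y_n\in B(0,r_0)$. Now I would invoke the scaling estimate $E(\alpha x,A)\leq ([\alpha]+1)E(x,A)$ recorded in the Remark following the definition of a non-decreasing metric: since $d$ is non-decreasing and each $A_n$ satisfies $\alpha A_n\subseteq A_n$ by $(A2)$, this estimate applies with $\alpha=\lambda$ and $A=A_n$. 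Combining it with the uniform bound on $B(0,r_0)$ gives, for every $n\in\mathbb{N}_0$,
\[
c\leq E(x_n,A_n)=E(\lambda y_n,A_n)\leq ([\lambda]+1)E(y_n,A_n)\leq ([\lambda]+1)\varepsilon_n.
\]

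The conclusion is then immediate: because $\mathbb{N}_0$ is infinite and $\varepsilon_n\to 0$, letting $n\to\infty$ along $\mathbb{N}_0$ forces $c\leq 0$, contradicting $c>0$. Therefore $\{A_n\}$ must satisfy Shapiro's theorem on $X$. I expect no serious obstacle here: the argument is essentially a transfer of the lower bound $c\leq E(x_n,A_n)$ on a bounded family down to the ball $B(0,r_0)$, and the only point that needs care is the correct bookkeeping of the scalar $\lambda$ together with the use of the non-decreasing property of $d$ through the quoted scaling inequality---exactly the mechanism that also underlies the proof of $(ii)\Rightarrow(i)$ in Theorem \ref{ST_fail}.
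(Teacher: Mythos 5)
Your argument is correct and coincides with the paper's own proof: both proceed by contradiction via Theorem \ref{ST_fail}, write $x_n=\lambda y_n$ with $y_n\in B(0,r_0)$ using boundedness, and apply the scaling estimate $E(\lambda y,A_n)\leq([\lambda]+1)E(y,A_n)$ to force $c\leq([\lambda]+1)\varepsilon_n\to 0$ along the infinite set $\mathbb{N}_0$. No issues to report.
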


\begin{proof}
We proceed by contradiction. If we assume that  $\{A_n\}$ fails Shapiro's theorem on $X$, Theorem \ref{ST_fail} guarantees that  $\{A_n\}$ fails Shapiro's theorem uniformly on a ball $B(0,r)$ for a certain $r>0$, since the metric $d$ is non decreasing. This means that there exist $r>0$ and $\{\varepsilon_n\}\searrow 0$ such that $E(x,A_n)\leq \varepsilon_n$ for all $n\in\mathbb{N}$ and all $x\in B(0,r)$. Let us now assume that $\{x_{n_k}\}_{k\in\mathbb{N}}$ is a bounded subset of $X$,  $c>0$ and we have that
$c\leq E(x_{n_k},A_{n_k}) $
 for all $k\in\mathbb{N}$, and $\lim_{k\to\infty}n_k=+\infty$. Take $\lambda>0$ such that $\{x_{n_k}\}_{k=0}^\infty\subseteq \lambda B(0,r)$ and write $x_{n_k}=\lambda z_k$ with $z_k\in B(0,r)$. Then   $E(z_k,A_n)\leq \varepsilon_n$ for al $n$. It follows  that
 \[
 c\leq E(x_{n_k},A_{n_k}) =E(\lambda z_{k},A_{n_k}) \leq ([\lambda]+1)E( z_{k},A_{n_k}) \leq ([\lambda]+1)\varepsilon_{n_k}  \ \ (k=0,1,\cdots),
 \]
which is impossible, since $\{\varepsilon_n\}\searrow 0$.
\end{proof}

\begin{corollary} Let $(X,d)$ be a locally convex metrizable topological vector space with metric $d(x,y)=\max_{k\in\mathbb{N}}\{2^{-k}\frac{p_k(x-y)}{1+p_k(x-y)}\}$, where $\{p_k\}$ is a separating family of semi norms of $X$. Assume that $(M_k)\subset [0,\infty)$, $\mathbb{N}_0$ is an infinite subset of $\mathbb{N}$ and there exist $\{x_n\}_{n\in\mathbb{N}_0}\subseteq X$, $m_0\in \mathbb{N}$ and $\delta>0$ such that:
\begin{itemize}
\item[$(a)$] $p_k(x_n)\leq M_k$ for all $k\in\mathbb{N}$ and $n\in\mathbb{N}_0$.
\item[$(b)$] $E_{m_0}(x_n,A_n):=\inf_{a\in A_n}p_{m_0}(x_n-a)>\delta$ for all $n\in\mathbb{N}_0$.
\end{itemize}
Then $\{A_n\}$ satisfies Shapiro's theorem on $X$.
\end{corollary}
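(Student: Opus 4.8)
The plan is to reduce this corollary to Proposition \ref{proponueva}. Recall that the Example preceding Proposition \ref{proponueva} shows that the metric $d(x,y)=\max_{k}\{2^{-k}\frac{p_k(x-y)}{1+p_k(x-y)}\}$ is non-decreasing, so, regarding $X$ as an $F$-space (as throughout the paper), the hypotheses of Proposition \ref{proponueva} on the metric are in force. It therefore suffices to produce, from conditions $(a)$ and $(b)$, a bounded set $\{x_n\}_{n\in\mathbb{N}_0}$ and a constant $c>0$ with $c\leq E(x_n,A_n)$ for all $n\in\mathbb{N}_0$.

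First I would verify that $\{x_n\}_{n\in\mathbb{N}_0}$ is bounded in $X$, that is, that for each $r>0$ there is $\lambda>0$ with $\{x_n\}\subseteq \lambda B(0,r)$. Fix $r>0$ and choose $K\in\mathbb{N}$ with $2^{-K}\leq r$. For $k\geq K$ one has $2^{-k}\frac{p_k(x_n/\lambda)}{1+p_k(x_n/\lambda)}\leq 2^{-k}\leq r$ regardless of $\lambda$ and $n$, because $t/(1+t)<1$. For the finitely many indices $k<K$, condition $(a)$ gives $p_k(x_n)\leq M_k$, hence $2^{-k}\frac{p_k(x_n)/\lambda}{1+p_k(x_n)/\lambda}\leq M_k/\lambda$; choosing $\lambda\geq r^{-1}\max_{k<K}M_k$ makes each such term $\leq r$ uniformly in $n$. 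Taking the maximum over $k$ then yields $d(x_n/\lambda,0)\leq r$ for all $n$, i.e. $\{x_n\}\subseteq \lambda B(0,r)$. Thus $\{x_n\}_{n\in\mathbb{N}_0}$ is bounded.

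Next I would extract the uniform lower bound on the errors. By translation invariance, $E(x_n,A_n)=\inf_{a\in A_n}d(x_n-a,0)$. For every $a\in A_n$ the definition of $d$ gives, by retaining only the $k=m_0$ term in the maximum, $d(x_n-a,0)\geq 2^{-m_0}\frac{p_{m_0}(x_n-a)}{1+p_{m_0}(x_n-a)}$. Since $t/(1+t)$ is increasing and, by condition $(b)$, $p_{m_0}(x_n-a)>\delta$ for every $a\in A_n$ and every $n\in\mathbb{N}_0$, this is bounded below by $2^{-m_0}\frac{\delta}{1+\delta}$. Passing to the infimum over $a\in A_n$ gives $E(x_n,A_n)\geq c$ with $c:=2^{-m_0}\frac{\delta}{1+\delta}>0$, independent of $n$.

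With a bounded family $\{x_n\}_{n\in\mathbb{N}_0}$ and a uniform constant $c>0$ satisfying $c\leq E(x_n,A_n)$, Proposition \ref{proponueva} applies directly and yields that $\{A_n\}$ satisfies Shapiro's theorem on $X$. I do not expect any serious obstacle: the only slightly delicate point is the boundedness verification, where one must split the maximum defining $d$ into the tail $k\geq K$ (controlled automatically by $t/(1+t)<1$) and the finitely many head indices $k<K$ (controlled by $(a)$ after scaling by $\lambda$); both the lower-bound computation and the final invocation of Proposition \ref{proponueva} are routine.
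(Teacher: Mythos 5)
Your proof is correct and follows essentially the same route as the paper's: boundedness of $\{x_n\}_{n\in\mathbb{N}_0}$ from $(a)$, the uniform lower bound $c=2^{-m_0}\frac{\delta}{1+\delta}\leq E(x_n,A_n)$ from $(b)$, and then a direct appeal to Proposition \ref{proponueva}. The only cosmetic differences are that you bound $d(x_n-a,0)$ from below termwise for each fixed $a\in A_n$ and then pass to the infimum, whereas the paper phrases the same estimate through the interchange $\max_m\inf_a\leq\inf_a\max_m$ applied to $E_m(x_n,A_n)$, and that you spell out the boundedness verification which the paper leaves implicit.
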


\begin{proof} Condition $(a)$ guarantees that $\{x_n\}_{n\in\mathbb{N}_0}$ is a bounded subset of $X$. On the other hand,  $\xi(t)=t/(1+t)$ is an increasing function on $(0,\infty)$, which implies that, for $n\in\mathbb{N_0}$,
\begin{eqnarray*}
0<c &=& 2^{-m_0}\frac{\delta}{1+\delta}<2^{-m_0}\frac{E_{m_0}(x_n,A_n)}{1+E_{m_0}(x_n,A_n)} \\
&\leq & \max_{m\in\mathbb{N}}\{2^{-m}\frac{E_{m}(x_n,A_n)}{1+E_{m}(x_n,A_n)}\} \\
& = & \max_{m\in\mathbb{N}}\{\inf_{a\in A_n}2^{-m}\frac{p_{m}(x_n-a)}{1+p_{m}(x_n-a)}\} \\
& \leq & \inf_{a\in A_n}\{ \max_{m\in\mathbb{N}}2^{-m}\frac{p_{m}(x_n-a)}{1+p_{m}(x_n-a)}\} \\
&=& E(x_n,A_n)
\end{eqnarray*}
and we can use Proposition \ref{proponueva}.
\end{proof}

We end this section with the observation that there are infinite dimensional  $F$-spaces with non-decreasing metrics which do not contain approximation schemes satisfying Shapiro's theorem.

\begin{example} Let $\mathbf{s}$ denote the $F$-space of all sequences of real numbers $\{a_n\}_{n=1}^\infty$ with the metric $d(\{a_n\},\{b_n\})=\sum_{k=1}^\infty\frac{1}{2^k}\frac{|a_k-b_k|}{1+|a_k-b_k|}$. Then $d$ is a non-decreasing metric and every approximation scheme $\{A_n\}$ in  $(\mathbf{s},d)$ fails Shapiro's theorem uniformly on $\mathbf{s}$.
\end{example}
\begin{proof} Given $N\in\mathbb{N}$ we denote by $\mathbf{s}_N$ the space of all sequences $\{a_n\}$ such that  $a_{N+k}=0$ for all $k\geq 1$. Let  $\{A_n\}$ be an approximation scheme in  $(\mathbf{s},d)$. It is easy to check that the sets $B_n=\phi_N(A_n)$, $n=0,1,\cdots$ (where $\phi_N(\{a_k\}_{k=1}^\infty)=\{a_{Nk}\}_{k=1}^\infty$, $a_{Nk}=a_k$ if $k\leq N$ and $a_{Nk}=0$ if $k>N$) define an approximation scheme on $\mathbf{s}_N$. Hence Proposition \ref{cero} implies that $\mathbf{s}_N =B_{m(N)+k}$ for all $k\geq 0$ and a certain $m(N)<\infty$.

Let $x\in\mathbf{s}$ and let $a\in A_{m(N)}$ be such that $\phi_N(x)=\phi_N(a)$. Then $d(x,a)\leq \sum_{k=1}^\infty\frac{1}{2^{N+k}}=\frac{1}{2^N}$. Hence $E(\mathbf{s},A_{m(N)})\leq  2^{-N}$. It follows that $\{E(\mathbf{s},A_n)\}\searrow 0$ and $E(x,A_n)\leq E(\mathbf{s},A_n)$ for all $x\in X$ and all $n\in\mathbb{N}$. This ends the proof.

\end{proof}

\section{Shapiro's theorem for  s-numbers and other scales of numbers}

A careful inspection of Theorem \ref{ST_fail}  shows that its proof rests on the construction of the sets $\Gamma_n$, which should be closed and have certain symmetry properties, and the fact that $E(x+y,A_{K(n)})\leq E(x,A_n)+E(y,A_n)$. This suggest that a lethargy result can also be proved in other contexts. Concretely, we introduce the following concept, which admits as particular cases some well known scales of numbers:

\begin{definition} Let $(X,d)$ be a metric vector space and set $\|x\|=d(x,0)$. We say that the map $\mathcal{E}:X\to \ell^\infty$, $\mathcal{E}(x)=\{e_n(x)\}_{n=0}^\infty$ defines an  scale on $X$ if
\begin{itemize}
\item[$(i)$]  $C_1\|x\| \geq e_{n}(x)\geq e_{n+1}(x)$ for all $x\in X$, all $n\in\mathbb{N}$, and a certain constant $C_1>0$. Furthermore, the function $e_n:X\to [0,\infty)$ is continuous for all $n\in\mathbb{N}$.
\item[$(ii)$] There exist a strictly increasing function $K:\mathbb{N}\to\mathbb{N}$ (which we call the ``jump function'') and a constant $C_2>0$ such that $e_{K(n)}(x+y)\leq C_2(e_n(x)+e_n(y))$  for all $x,y \in X$ and $n\in\mathbb{N}$.
\item[$(iii)$] There exists a control function $\phi:[0,\infty)\to [0,\infty)$ such that $e_n(\lambda x)\leq \phi(|\lambda |) e_n(x)$ for all scalar $\lambda$ and all $x\in X$. Furthermore, $e_n(x)=e_n(-x)$ for all $x\in X$.
\end{itemize}
\end{definition}

\begin{example}
Let $\mathcal{L}$ denote the class of linear continuous operators defined between two quasi-Banach spaces. Following Pietsch \cite{Pie,PieID}, a rule $s:\mathcal{L}\to\mathbb{R}^{\mathbb{N}}$ defines an s-number sequence if it satisfies the following properties:
\begin{itemize}
\item[$(i)$] Monotonicity: $\|T\|=s_1(T)\geq s_2(T)\geq \cdots \geq s_n(T)\geq s_{n+1}(T)\geq \cdots \geq 0$.
\item[$(ii)$] Additivity:  $s_{n+m-1}(T+S)\leq s_m(T)+s_n(S)$ for all $T,S\in\mathcal{L}(X,Y)$.
\item[$(iii)$] Ideal property :  $s_{n}(STR)\leq \|S\|s_n(T)\|R\|$ for all $S\in\mathcal{L}(Z,W)$,$T\in\mathcal{L}(Y,Z)$, and $R\in\mathcal{L}(X,Y)$.
\item[$(iv)$] Rank property:  If $\mathbf{rank}(T)<n$, then $s_{n}(T)=0$.
\item[$(v)$] Norming property:  $s_{n}(1_{\ell^2_n})=1$ for all $n\in\mathbb{N}$, where $1_{\ell^2_n}$ denotes the identity operator defined on the n-dimensional Hilbert space $\ell^2_n$.
\end{itemize}
Obviously, if $s$ is an s-number sequence, then $s$ defines an scale on $\mathcal{L}(X,Y)$ for all pair of quasi-Banach spaces $X,Y$.
\end{example}

\begin{example}
Given $X$  a quasi-Banach space and $Q=\{Q_n\}_{n=0}^{\infty}\subseteq \mathcal{P}(X)$
an infinite family of subsets of $\mathcal{P}(X)$, we say that $(X,Q)$ is a {\it generalized approximation scheme} (or that $(Q_n)$ is a generalized approximation scheme in $X$) if:
\begin{itemize}
\item[$(GA1)$] There exists a map $K:\mathbb{N}\to\mathbb{N}$ such that $K(n)\geq n$ and $A_n\in Q_n$ implies $A_n+A_n\subseteq B_{K(n)}$ for certain $B_{K(n)}\in Q_{K(n)}$, for all $n\in\mathbb{N}$.

\item[$(GA2)$] $\lambda A_n\subset A_n$ for all $A_n\in Q_n$, all $n\in\mathbb{N}$ and all scalars $\lambda$.

\item[$(GA3)$] $\bigcup_{n\in\mathbb{N}}\bigcup_{A_n\in Q_n}A_n$ is a dense subset of $X$.
\end{itemize}
Given a generalized approximation scheme $(X,Q)$ and $T\in \mathcal{L}(X)$, we define the approximation numbers associated with $Q$ by
\[
\alpha_n(T,Q)=\inf\{\|T-S\|:S\in\mathcal{L}(X), S(X)\in Q_n\}.
\]
and the Kolmogorov diameters of $T$ associated with $Q$ by
\[
\delta_n(T,Q)=\inf\{r>0: \exists A_n\in Q_n, T(B(0,1))\subseteq rB(0,1)+A_n\}
\]
It is easy to check that the maps $\alpha_Q,\delta_Q:\mathcal{L}(X)\to\ell^{\infty}$, given by $\alpha_Q(T)=\{\alpha_n(T,Q)\}$ and $\delta_Q(T)=\{\delta_n(T,Q)\}$, define scales on $\mathcal{L}(X)$.
\end{example}

\begin{example} Let $(X,d)$ be a compact metric space and let $C(X)$ denote the space of continuous functions $f:X\to \mathbb{R}$ dotted with the norm $\|f\|_{\infty}=\sup_{x\in X}|f(x)|$. Given $f\in C(X)$, the modulus of continuity of $f$ is the function $\omega(f,\delta)=\sup_{d(x,y)\leq \delta}|f(x)-f(y)|$. Then $\Omega(f)=\{\omega(f,\frac{1}{1+n})\}_{n=0}^{\infty}$ defines an scale on $C(X)$.
\end{example}

\begin{proposition} \label{cierre} Let $(X,d)$ be an $F$-space with non-decreasing metric $d$  and let $\mathcal{E}(x)=\{e_n(x)\}_{n=0}^\infty$ be a scale on $X$. Then $X_{\mathcal{E}}:=\{x\in X: \mathcal{E}(x)\in c_0\}$ is a closed vector subspace of $X$.
\end{proposition}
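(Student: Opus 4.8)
The plan is to treat the two assertions—that $X_{\mathcal{E}}$ is a vector subspace and that it is closed—separately, drawing on the three axioms $(i)$–$(iii)$ of a scale. Throughout I will use that, by $(i)$, for each fixed $x$ the sequence $\{e_n(x)\}_{n\geq 0}$ is non-increasing and nonnegative, hence convergent; consequently it tends to $0$ (i.e.\ $x\in X_{\mathcal{E}}$) as soon as \emph{some} subsequence tends to $0$. This elementary remark is what converts the ``jumped'' estimates coming from the jump function $K$ into genuine convergence, since $K$ is strictly increasing and therefore $K(n)\to\infty$.

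To see that $X_{\mathcal{E}}$ is a subspace, I first note $0\in X_{\mathcal{E}}$ because $(i)$ gives $e_n(0)\leq C_1\|0\|=0$. For stability under scalar multiplication, if $x\in X_{\mathcal{E}}$ and $\lambda$ is a scalar, then $(iii)$ yields $e_n(\lambda x)\leq \phi(|\lambda|)e_n(x)\to 0$. For stability under addition, if $x,y\in X_{\mathcal{E}}$, then $(ii)$ gives $e_{K(n)}(x+y)\leq C_2\bigl(e_n(x)+e_n(y)\bigr)\to 0$ as $n\to\infty$; since $\{e_m(x+y)\}$ is non-increasing and $K(n)\to\infty$, the whole sequence tends to $0$, so $x+y\in X_{\mathcal{E}}$.

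The closedness is the step that requires the sharpest estimate. Let $x_k\in X_{\mathcal{E}}$ with $x_k\to x$ in $(X,d)$; I must show $\{e_m(x)\}\to 0$. Writing $x=x_k+(x-x_k)$ and applying $(ii)$ gives, for all $n,k$,
\[
e_{K(n)}(x)\leq C_2\bigl(e_n(x_k)+e_n(x-x_k)\bigr).
\]
The decisive point is that the ``error'' term is controlled \emph{uniformly in} $n$ by the first part of $(i)$: $e_n(x-x_k)\leq C_1\|x-x_k\|=C_1\,d(x,x_k)$. Hence, given $\varepsilon>0$, I will first pick $k$ so large that $C_1C_2\,d(x,x_k)<\varepsilon/2$, and then, $k$ being fixed, use $x_k\in X_{\mathcal{E}}$ to choose $N$ with $C_2\,e_n(x_k)<\varepsilon/2$ for all $n\geq N$. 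Combining the two bounds gives $e_{K(n)}(x)<\varepsilon$ for every $n\geq N$, and the monotonicity-plus-subsequence remark upgrades this to $\{e_m(x)\}\to 0$, i.e.\ $x\in X_{\mathcal{E}}$.

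The main obstacle I anticipate is precisely locating the uniform bound $e_n(x-x_k)\leq C_1\,d(x,x_k)$: without it one is tempted to invoke continuity of $e_n$, which only controls the error for each fixed $n$ and does not suffice to push the estimate to large indices. Once one realizes that $(i)$ already supplies control independent of $n$, the convergence $x_k\to x$ does the rest, and continuity of the $e_n$ is not even needed for this particular statement.
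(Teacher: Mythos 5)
Your proof is correct and follows essentially the same route as the paper's: the subspace property via axioms $(ii)$ and $(iii)$, and closedness via the decomposition $x = x_k + (x-x_k)$ together with the uniform bound $e_n(x-x_k)\leq C_1 d(x,x_k)$ from $(i)$. Your explicit remark that monotonicity of $\{e_m(x)\}$ plus $K(n)\to\infty$ upgrades the subsequential estimate to full convergence is a detail the paper leaves implicit, but the argument is the same.
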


\begin{proof} Let $x,y\in X_{\mathcal{E}}$ and let $\alpha,\beta$ be two scalars. Then $$e_{K(n)}(\alpha x+\beta y) \leq C_2 (e_n(\alpha x)+e_n(\beta y))\leq \max\{\phi(|\alpha|),\phi(|\beta|)\} C_2 (e_n(x)+e_n(y))\to 0 \ \ (n\to\infty).$$
This proves that $X_{\mathcal{E}}$ is a vector subspace of $X$. Let $x\in X$ be such that $x=\lim_{n\to \infty}x_n$ with $\{x_n\}_{n=0}^\infty\subseteq X_{\mathcal{E}}$. Take $\varepsilon>0$ and  $x_k$ such that $C_1C_2\|x-x_k\|<\varepsilon$. Then
$e_{K(n)}(x)\leq C_2(e_n(x-x_k)+e_n(x_k))\leq C_2(C_1\|x-x_k\|+e_n(x_k))\leq \varepsilon + C_2e_n(x_k)\leq 2\varepsilon $ for $n$ big enough. This proves that $x\in X_{\mathcal{E}}$.

\end{proof}

\begin{definition}Let $(X,d)$ be an $F$-space and assume that $\mathcal{E}(x)=\{e_n(x)\}_{n=0}^\infty$ is an  scale on $X$. We say that $\mathcal{E}$ satisfies Shapiro's theorem if for all decreasing sequence $\{\varepsilon_n\}_{n=0}^{\infty}\in c_0$ there exists $x\in X_{\mathcal{E}}$ such that $e_n(x)\neq \mathbf{O}(\varepsilon_n)$.
\end{definition}

\begin{theorem} \label{STscale} Let $(X,d)$ be an $F$-space with non-decreasing metric $d$  and let $\mathcal{E}(x)=\{e_n(x)\}_{n=0}^\infty$ be a scale on $X$. The following are equivalent claims:
\begin{itemize}
\item[$(i)$] There exists $\{\varepsilon_n\}\searrow 0$ such that $e_n(x)=\mathbf{O}(\varepsilon_n)$ for all $x\in X$.
\item[$(ii)$] There exists $\{\varepsilon_n\}\searrow 0$, $C>0$,  and $r_0>0$ such that  $e_n(x)\leq C \varepsilon_n$ for all $x\in B(0,r_0)$.
\end{itemize}
Consequently, the scale $\mathcal{E}$ satisfies Shapiro's theorem on $X$ if and only if  $$\inf_{n\in\mathbb{N}}sup_{x\in B(0,r)\cap X_{\mathcal{E}}}e_n(x)>0 $$ for all $r>0$.
\end{theorem}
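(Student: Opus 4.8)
My plan is to copy the scheme of the proof of Theorem~\ref{ST_fail}, trading the approximation-scheme axioms for the scale axioms $(i)$--$(iii)$, and then to deduce the ``consequently'' clause by applying the resulting equivalence inside the closed subspace $X_{\mathcal{E}}$. For $(i)\Rightarrow(ii)$ I start from a sequence $\{\varepsilon_n\}\searrow 0$ with $e_n(x)=\mathbf{O}(\varepsilon_n)$ for every $x$; as the $\varepsilon_n$ are positive, for each $x$ there is a constant $C(x)$ with $e_n(x)\le C(x)\varepsilon_n$ for all $n$. Since $K$ is strictly increasing on $\mathbb{N}$ we have $K(n)\ge n$, so $h(n)=K(n+1)-1$ is an increasing sequence with $h(n)\ge n$, and Lemma~\ref{sucesiones} lets me assume without loss of generality that $\varepsilon_n\le 2\varepsilon_{K(n+1)-1}$ for all $n$. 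I then introduce the sublevel sets $\Gamma_\alpha=\{x\in X:e_n(x)\le\alpha\varepsilon_n\ \text{for all }n\}$. Each $\Gamma_\alpha$ is closed because $e_n$ is continuous (axiom $(i)$) and $\Gamma_\alpha=-\Gamma_\alpha$ because $e_n(-x)=e_n(x)$ (axiom $(iii)$); since $X=\bigcup_m\Gamma_m$, Baire's theorem produces a ball $B(x_0,r_0)\subseteq\Gamma_{m_0}$, whence $B(-x_0,r_0)\subseteq\Gamma_{m_0}$ by symmetry.

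The core step is then to show that $B(0,r_0)$ lands in a single $\Gamma_C$. Writing any $z\in B(0,r_0)$ as $z=\tfrac12(u+v)$ with $u=x_0+z\in B(x_0,r_0)$ and $v=z-x_0\in B(-x_0,r_0)$, subadditivity $(ii)$ and homogeneity $(iii)$ give
$$e_{K(n)}(z)=e_{K(n)}\bigl(\tfrac{u}{2}+\tfrac{v}{2}\bigr)\le C_2\bigl(e_n(\tfrac{u}{2})+e_n(\tfrac{v}{2})\bigr)\le 2C_2\,\phi(\tfrac12)\,m_0\,\varepsilon_n$$
for all $n$. To convert this into a bound on $e_j(z)$ for an arbitrary $j\ge K(0)$, I choose $n$ with $K(n)\le j\le K(n+1)-1$ and combine monotonicity $(i)$ with $\varepsilon_n\le 2\varepsilon_{K(n+1)-1}\le 2\varepsilon_j$ to get $e_j(z)\le e_{K(n)}(z)\le 4C_2\phi(\tfrac12)m_0\,\varepsilon_j$; the finitely many $j<K(0)$ are absorbed into the constant since $e_j\le C_1 r_0$ on the ball while $\varepsilon_j\ge\varepsilon_{K(0)-1}>0$. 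This is precisely $(ii)$. The converse $(ii)\Rightarrow(i)$ is immediate: $B(0,r_0)$ is absorbing, so each $x=\lambda y$ with $y\in B(0,r_0)$, and $e_n(x)=e_n(\lambda y)\le\phi(\lambda)e_n(y)\le\phi(\lambda)C\varepsilon_n$.

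For the final equivalence I apply $(i)\Leftrightarrow(ii)$ to the space $X_{\mathcal{E}}$, which by Proposition~\ref{cierre} is a closed subspace of $X$, hence an $F$-space whose restricted metric is still non-decreasing, and on which $\mathcal{E}$ restricts to a scale with $X_{\mathcal{E}}=(X_{\mathcal{E}})_{\mathcal{E}}$. In $X_{\mathcal{E}}$, $\mathcal{E}$ \emph{fails} Shapiro's theorem exactly when $(i)$ holds, hence exactly when $(ii)$ holds, i.e.\ when there is $r_0>0$ and $\{\varepsilon_n\}\searrow 0$ with $\sup_{x\in B(0,r_0)\cap X_{\mathcal{E}}}e_n(x)\le C\varepsilon_n\to 0$. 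Since axiom $(i)$ makes $n\mapsto\sup_{x\in B(0,r)\cap X_{\mathcal{E}}}e_n(x)$ nonincreasing and finite (bounded by $C_1 r$), its infimum equals its limit; thus failure of Shapiro's theorem is equivalent to the existence of some $r_0>0$ with $\inf_n\sup_{x\in B(0,r_0)\cap X_{\mathcal{E}}}e_n(x)=0$. Negating this statement yields the asserted criterion $\inf_n\sup_{x\in B(0,r)\cap X_{\mathcal{E}}}e_n(x)>0$ for all $r>0$.

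The step I expect to require the most care is the bookkeeping of constants: where Theorem~\ref{ST_fail} used the non-decreasing metric to pass freely from $x$ to $x/2$, here the homogeneity axiom only supplies the factor $\phi(\tfrac12)$, which must be tracked through the halving and the jump-index interpolation, and the indices below $K(0)$ must be handled separately. The other delicate point is the reduction to $X_{\mathcal{E}}$ in the last part, where one must verify that completeness, the non-decreasing property of the metric, and all three scale axioms genuinely survive restriction to the closed subspace, and that the passage between $\inf_n\sup_x e_n$ and the uniform-failure statement is legitimate, which rests on the monotonicity of $\sup_x e_n(x)$ in $n$.
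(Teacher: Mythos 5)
Your proposal is correct and follows essentially the same route as the paper's own proof: the same sublevel sets $\Gamma_\alpha$, Baire category, the symmetry and halving argument with the constant $4\phi(\tfrac12)C_2m_0$, the index interpolation $K(n)\le j\le K(n+1)-1$ via Lemma~\ref{sucesiones}, the absorbing-ball argument for the converse, and the reduction to $X_{\mathcal{E}}$ via Proposition~\ref{cierre} for the final clause. The only differences are cosmetic refinements (explicitly absorbing the indices $j<K(0)$ into the constant and spelling out the monotonicity of $n\mapsto\sup_x e_n(x)$), which the paper leaves implicit.
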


\begin{proof} $(i)\Rightarrow (ii)$.  Let  $\{\varepsilon_n\}\searrow   0$ be such that, for every $x\in X$ there is a constant $C(x)>0$ satisfying $e_n(x)\leq C(x)\varepsilon_n$ for $n=0,1,\cdots$.  It follows from Lemma \ref{sucesiones} that we may assume, without loss of generality, that
$\varepsilon_n\leq 2\varepsilon_{K(n+1)-1}$ for all $n\in\mathbb{N}$.  It follows that $X=\bigcup_n\Gamma_n$, where $\Gamma_{\alpha}=\{x\in X:  e_n(x)\leq \alpha \varepsilon_n$ for all $n\in \mathbb{N}\}$.  Now, the sets $\Gamma_n$ are closed subsets of $X$, since the functions $e_n$ are continuous. Hence  Baire's category theorem implies that  $\Gamma_{m_0}$ contains an open ball $B(x_0,r_0)$ for a certain $m_0\in \mathbb{N}$. Finally, the sets $\Gamma_{\alpha}$ satisfy the symmetry condition $\Gamma_{\alpha}=-\Gamma_{\alpha}$, since $e_n(-x)=e_n(x)$.  This implies that $B(-x_0,r_0)\subseteq \Gamma_{m_0}$. Let $x,y\in \Gamma_{m_0}$. Then
\begin{eqnarray*}
e_{K(n)}(\frac{x+y}{2}) &\leq & C_2( e_n(\frac{x}{2})+e_n(\frac{y}{2}) )\\
&\leq &  \phi(\frac{1}{2})C_2 (e_n(x)+e_n(y))  \\
&\leq & 2  \phi(\frac{1}{2})C_2 m_0 \varepsilon_n  \ (n=0,1,\cdots)\\
\end{eqnarray*}
Let us now take $j\in\mathbb{N}$. Then there exists a unique $n\in\mathbb{N}$ such that $K(n)\leq j\leq K(n+1)-1$. Hence
\[
e_j(\frac{x+y}{2}) \leq e_{K(n)}(\frac{x+y}{2}) \leq 2  \phi(\frac{1}{2})C_2 m_0 \varepsilon_n \leq 4  \phi(\frac{1}{2})C_2 m_0 \varepsilon_{K(n+1)-1}\leq  4  \phi(\frac{1}{2})C_2m_0 \varepsilon_j,
\]
so that $\frac{x+y}{2}\in\Gamma_{4  \phi(\frac{1}{2})C_2m_0}$.
It follows that
\[
B(0,r_0)\subseteq \frac{1}{2}(B(x_0,r_0)+B(-x_0,r_0)) \subset \Gamma_{4  \phi(\frac{1}{2})C_2m_0}.
\]
This ends the proof.

$(ii)\Rightarrow (i)$. Assume that $\{\varepsilon_n\}\searrow   0$ satisfies $e_n(x)\leq  \varepsilon_n$ for all $n\in\mathbb{N}$ and all $x\in B(0,r_0)$ and let $x\in X$. Then  there exists $\lambda>0$ such that $x\in\lambda B(0,r_0)$, so that $x=\lambda y$ for some $y\in B(0,r_0)$. This implies that
\[
e_n(x)=e_n(\lambda y)\leq \phi(\lambda) e_n(y)\leq \phi(\lambda)  \varepsilon_n, \text{ for all } n\in\mathbb{N}.
\]
Let us now prove the last claim of the theorem. In  the case $X=X_{\mathcal{E}}$, the result follows easily from $(i)\Leftrightarrow (ii)$. On the other hand, if $X_{\mathcal{E}}$ is a proper subspace of $X$, then Proposition \ref{cierre} guarantees that $X_{\mathcal{E}}$ is an $F$-space when dotted with the metric of $X$. The result follows if we apply the equivalence $(i)\Leftrightarrow (ii)$  to this new space, just taking into account that $B_{X_{\mathcal{E}}}(0,r)=B(0,r)\cap X_{\mathcal{E}}$.
\end{proof}

\begin{corollary} \label{mod} Let $(X,d)$ be a compact metric space which contains infinitely many points and consider the scale on $C(X)$ given by  $\Omega(f)=\{\omega(f,\frac{1}{1+n})\}_{n=0}^{\infty}$. Then $\Omega$ satisfies Shapiro's theorem on $C(X)$.
\end{corollary}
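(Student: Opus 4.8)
The plan is to invoke Theorem \ref{STscale} applied to the scale $\mathcal{E} = \Omega$ on the Banach space $C(X)$, where $\|f\| = d(f,0) = \|f\|_\infty$. By that theorem it suffices to verify that $\inf_{n\in\mathbb{N}} \sup_{f \in B(0,r)\cap C(X)_\Omega} \omega(f, \frac{1}{1+n}) > 0$ for every $r > 0$. The first observation I would record is that the subspace $C(X)_\Omega$ is in fact all of $C(X)$: since $X$ is compact, every $f \in C(X)$ is uniformly continuous, so $\omega(f,\delta) \to 0$ as $\delta \to 0$ and hence $\Omega(f) = \{\omega(f, \frac{1}{1+n})\}_{n=0}^\infty \in c_0$. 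Thus $B(0,r) \cap C(X)_\Omega = B(0,r) = \{f : \|f\|_\infty \le r\}$, and the quantity to estimate reduces to $\inf_n \sup_{\|f\|_\infty \le r} \omega(f, \frac{1}{1+n})$.

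Next I would show that for each fixed $n$ the inner supremum already equals $2r$, which forces the infimum to be $2r > 0$. The upper bound $\omega(f,\delta) \le 2\|f\|_\infty \le 2r$ is immediate from the definition of the modulus of continuity. For the matching lower bound, fix $n$ and set $\delta = \frac{1}{1+n}$; the goal is to produce a single $f$ with $\|f\|_\infty \le r$ and $\omega(f,\delta) = 2r$. Since $X$ is infinite and compact it possesses a limit point $p$, so inside the ball $B(p,\delta/2)$ there are two distinct points $x_0 \ne y_0$, and the triangle inequality gives $d(x_0,y_0) < \delta$. The Urysohn-type function $g(x) = \frac{d(x,y_0)}{d(x,x_0)+d(x,y_0)}$ is continuous on $X$ with $g(x_0)=1$, $g(y_0)=0$ and $0 \le g \le 1$; setting $f = r(2g - 1)$ gives $\|f\|_\infty \le r$, $f(x_0) = r$ and $f(y_0) = -r$. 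Then $|f(x_0) - f(y_0)| = 2r$ together with $d(x_0,y_0) \le \delta$ shows $\omega(f,\delta) \ge 2r$, whence $\omega(f,\delta) = 2r$.

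Combining the two bounds yields $\sup_{\|f\|_\infty \le r} \omega(f,\frac{1}{1+n}) = 2r$ for every $n$, so the infimum over $n$ equals $2r > 0$, and Theorem \ref{STscale} concludes that $\Omega$ satisfies Shapiro's theorem on $C(X)$. The only place where the hypotheses are genuinely used is the construction of the two nearby distinct points, and this is the main (modest) obstacle: it is precisely the infinitude of $X$ together with compactness that guarantees a limit point, and hence distinct points at distance $\le \delta$ for arbitrarily small $\delta$. Were $X$ finite this step would fail for small $\delta$, which is exactly why the corollary requires infinitely many points. Everything else — the identification $C(X)_\Omega = C(X)$ and the elementary estimates on $\omega$ — is routine.
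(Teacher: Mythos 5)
Your proof is correct and follows essentially the same route as the paper: both verify the criterion of Theorem \ref{STscale} by using a limit point of the infinite compact space to produce, for each $n$, two distinct points at distance at most $\frac{1}{1+n}$ and then a norm-bounded test function separating them (the paper uses $\frac{2}{\pi}\arctan\bigl(d(x,y_n)/d(x_n,y_n)\bigr)$ where you use a Urysohn-type quotient, but this is immaterial). Your version is in fact slightly more careful, since you explicitly note that $C(X)_\Omega=C(X)$ and treat balls of arbitrary radius $r$, points the paper leaves implicit.
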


\begin{proof}
It follows from the infinitude of $X$ that for each $\delta>0$ there exists $x,y\in X$, $x\neq y$, $d(x,y)\leq \delta$, since $X$ being compact, it must contain an infinite convergent sequence.
The result  follows from Theorem \ref{STscale} just taking into account that if $x_{n},y_n\in X$ satisfy $0<d(x_n,y_n)\leq \frac{1}{n+1}$, then  $g_n(x)=\frac{2}{\pi}\arctan(\frac{d(x,y_n)}{d(x_n,y_n)})$ belongs to the unit ball of $C(X)$ and $g_n(y_n)=0$, $g_n(x_n)= 1/2$, so that $\omega(g_n,\frac{1}{n+1})\geq \frac{1}{2}$.
\end{proof}

Given a quasi-Banach space $X$ and $E\subseteq X$ a closed subspace, we define $\lambda(E,X)=\inf\{\|P\|:P \text{ is a projection of  } X \text{ onto } E\}$ and
$$p_n(X)=\inf\{\lambda(E,X): E\subset X,\  \ \dim E=n\}.$$

\begin{corollary} \label{coco} Assume that $X$ is a quasi-Banach space such that $\sup p_n(X)=M<\infty$, and $s$ is an s-number sequence. Then $s:\mathcal{L}(X)\to\ell^{\infty}$ satisfies Shapiro's theorem.
\end{corollary}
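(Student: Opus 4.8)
The plan is to deduce this from Theorem \ref{STscale} applied to the scale $s$ on the quasi-Banach space $\mathcal{L}(X)$. First I would record that $\mathcal{L}(X)$ is itself a quasi-Banach space and hence, via the Aoki--Rolewicz $p$-norm, an $F$-space carrying a non-decreasing translation-invariant metric $d(S,T)=\|S-T\|^p$, and that $s$ defines a scale on it: the jump function is $K(n)=2n-1$ with constant $C_2=1$ coming from the additivity axiom $s_{2n-1}(T+S)\le s_n(T)+s_n(S)$, and the control function is $\phi(\lambda)=|\lambda|$ coming from homogeneity $s_n(\lambda T)=|\lambda|\,s_n(T)$. By Theorem \ref{STscale}, it then suffices to prove that
\[
\inf_{n\in\mathbb{N}}\ \sup_{T\in B(0,r)\cap \mathcal{L}(X)_s} s_n(T)>0\qquad\text{for every }r>0,
\]
where $\mathcal{L}(X)_s=\{T:\{s_n(T)\}\in c_0\}$. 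Since every finite rank operator $T$ satisfies $s_k(T)=0$ for $k>\mathbf{rank}(T)$ by the rank axiom, all finite rank operators lie in $\mathcal{L}(X)_s$, so I am free to use finite rank test operators.

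The heart of the argument is to manufacture, for each $n$, a single test operator $T_n$ with $\|T_n\|$ uniformly bounded and $s_n(T_n)\ge c$ for a constant $c>0$ independent of $n$. I would take $T_n=\iota_n P_n$, the rank-$n$ projection of $X$ onto a well-chosen $n$-dimensional subspace $E_n$, where $\iota_n\colon E_n\hookrightarrow X$ is the inclusion and $P_n\colon X\to E_n$ a projection. The hypothesis $\sup_n p_n(X)=M<\infty$ supplies, for each $n$, an $n$-dimensional subspace admitting a projection of norm at most $2M$; the decisive extra requirement is that $E_n$ be chosen so that it is also uniformly isomorphic to $\ell^2_n$, say $d(E_n,\ell^2_n)\le D$ for a constant $D$ independent of $n$ (this is where Dvoretzky-type information about $X$ must be brought in). Granting such $E_n$, fix an isomorphism $u_n\colon \ell^2_n\to E_n$ with $\|u_n\|\,\|u_n^{-1}\|\le 2D$, and note $\|T_n\|\le\|P_n\|\le 2M$.

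The lower bound then follows from the ideal and norming axioms alone, with no deeper facts about $s$-numbers. Since $P_n\iota_n=1_{E_n}$ and hence $T_n\iota_n=\iota_n$, one checks the factorization
\[
1_{\ell^2_n}=\big(u_n^{-1}P_n\big)\,T_n\,\big(\iota_n u_n\big),
\]
with $u_n^{-1}P_n\colon X\to\ell^2_n$ and $\iota_n u_n\colon \ell^2_n\to X$. The norming axiom gives $s_n(1_{\ell^2_n})=1$, so the ideal axiom yields
\[
1=s_n(1_{\ell^2_n})\le \|u_n^{-1}P_n\|\;s_n(T_n)\;\|\iota_n u_n\|\le \|u_n^{-1}\|\,\|P_n\|\,\|u_n\|\;s_n(T_n)\le 4DM\,s_n(T_n),
\]
whence $s_n(T_n)\ge (4DM)^{-1}=:c$. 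Finally, given $r>0$, rescaling $\widetilde T_n=\min\{1,r/2M\}\,T_n$ keeps $\widetilde T_n\in B(0,r)\cap \mathcal{L}(X)_s$ while, by homogeneity, $s_n(\widetilde T_n)\ge \min\{1,r/2M\}\,c>0$ uniformly in $n$; this establishes the displayed infimum and, through Theorem \ref{STscale}, the corollary.

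The one genuinely delicate step, and the place where I expect the real work to lie, is the simultaneous choice of $E_n$: the definition of $p_n(X)$ only produces uniformly complemented $n$-dimensional subspaces, while Dvoretzky's theorem only produces uniformly Euclidean ones, and a priori these need not coincide. The crux is therefore to show that the hypothesis $\sup_n p_n(X)<\infty$ can be combined with the local theory of $X$ to guarantee subspaces $E_n$ that are uniformly complemented and uniformly isomorphic to $\ell^2_n$ at the same time; everything else (the reduction via Theorem \ref{STscale}, the $c_0$-membership of finite rank operators, the factorization, and the rescaling) is routine.
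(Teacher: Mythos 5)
Your overall architecture coincides with the paper's: reduce to the criterion of Theorem \ref{STscale}, note that finite rank operators lie in $\mathcal{L}(X)_s$ by the rank property, and exhibit uniformly bounded rank-$n$ projections $P_n$ onto almost-optimal $n$-dimensional subspaces $E_n$ with $\|P_n\|\le 2p_n(X)\le 2M$ and $s_n(P_n)$ bounded below. The divergence, and the genuine gap, is in how you obtain the lower bound on $s_n(P_n)$. You factor $1_{\ell^2_n}$ through $P_n$, which forces you to choose $E_n$ simultaneously uniformly complemented and uniformly isomorphic to $\ell^2_n$, and you explicitly leave the existence of such $E_n$ unproved. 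This is not a deferred technicality but a step that can actually fail: for $X=c_0$ one has $p_n(c_0)=1$ (take the coordinate copies of $\ell_\infty^n$), yet any $n$-dimensional subspace $E\subset c_0$ at Banach--Mazur distance at most $D$ from $\ell^2_n$ satisfies $\lambda(E,c_0)\ge \lambda(E)\ge \lambda(\ell^2_n)/D\ge c\sqrt{n}/D$, since finite rank projections on $c_0$ extend to $\ell_\infty$ with the same norm and the absolute projection constant of a $D$-Euclidean $n$-dimensional space is of order $\sqrt{n}$. So in $c_0$ there are no subspaces with both properties, and your construction cannot be completed there.

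The paper's proof needs no Euclidean structure on $E_n$: writing $i_n:E_n\to X$ for the inclusion and $Q_n:X\to E_n$ for $P_n$ viewed as a surjection, it factors $1_{E_n}=Q_nP_ni_n$ and invokes $s_n(1_{E_n})=1$ to conclude $1\le \|Q_n\|\,s_n(P_n)\,\|i_n\|\le 2M\,s_n(P_n)$. The single missing idea in your argument is therefore to apply the norming property to the identity of $E_n$ itself rather than to $1_{\ell^2_n}$, i.e.\ to use it in Pietsch's stronger form $s_n(1_E)=1$ for \emph{every} $n$-dimensional space $E$. One caveat worth recording: that stronger form is what genuinely has to be known for the scale at hand. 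It holds for the approximation, Gelfand, Kolmogorov and Bernstein numbers, but it is not a formal consequence of the five axioms as listed, since the Hilbert numbers satisfy $h_n(1_{\ell_\infty^n})=\mathbf{O}(n^{-1/2})$ by the little Grothendieck theorem. So your instinct that the literal axiom $s_n(1_{\ell^2_n})=1$ does not suffice by itself was sound; the repair is the stronger norming property, not Dvoretzky-type considerations.
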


\begin{proof} Given $n\in\mathbb{N}$, there exist $E_n$ subspace of $X$ with $\dim E_n=n$ and $P_n:X\to X$, projection onto $E_n$ such that $\|P_n\|\leq 2 p_n(X)\leq 2M$. Take $Q_n:X\to E_n$ defined by $Q_n(x)=P_n(x)$ and let $i_n:E_n\to X$ denote the inclusion map. Then $1_{E_n}=Q_nP_ni_n$, so that
\[
1=s_n(1_{E_n})\leq \|Q_n\|s_n(P_n)\|i_n\|\leq Cs_n(P_n).
\]
This implies that $P_n$ satisfies $\|P_n\|\leq M$ and $s_n(P_n)\geq 1/C>0$. The result follows since $s_{n+k}(P_n)=0$ for $k=1,2,\cdots$, since $\mathbf{rank}(P_n)=n$, so that $P_n\in \mathcal{L}(X)_{s}$.
\end{proof}

\begin{remark}
It would be nice to prove a result similar to Corollary \ref{coco} for operators $T:X\to Y$ acting between different Banach spaces, but the result should be complicated since there are examples of s-numbers sequences $s$, nice Banach spaces $X,Y$  and decreasing sequences $\{\varepsilon_n\}\searrow 0$ such that all operator $T\in \mathcal{L}(X,Y)$ satisfies $s_n(T)=\mathbf{O}(\varepsilon_n)$. Concretely, Oikhberg \cite[Proposition 1.2]{Oik} has demonstrated that all operator $T\in\mathcal{L}(c_0,\ell^1)$ satisfies $\lim_{k\to\infty}\sqrt{k}x_k(T)=\lim_{k\to\infty}\sqrt{k}y_k(T)=\lim_{k\to\infty}kh_k(T)=0$, where $x_k(T),y_k(T)$ and $h_k(T)$ denote the Weyl numbers, Chang numbers and Hilbert numbers of the operator $T$, respectively.

On the other hand, Oikhberg \cite[Theorem 1.1]{Oik} has also proved that, for arbitrary infinite dimensional Banach spaces $X,Y$, the sequences of approximation numbers $a_n(T)$ and symmetrized (or absolute) numbers $t_n(T)$ satisfy Shapiro's theorem on $\mathcal{L}(X,Y)$ (for approximation numbers this result was also proved by Almira and Oikhberg in \cite{almira_oikhberg}).

In \cite{AkLe} Aksoy and Lewicki introduced the concept of Bernstein pair with respect to an s-number sequence  $s_n$ as a pair of Banach spaces $(X,Y)$ such that, for all $\{\varepsilon_n\}\searrow 0$ there exists an operator $T\in\mathcal{L}(X,Y)$ and a constant $d>0$ such that $d^{-1}\varepsilon_n\leq s_n(T)\leq d \varepsilon_n$ for all $n\in\mathbb{N}$, and provided some examples of pairs of classical Banach spaces which form a Bernstein pair with respect to the approximation numbers $a_n(T)$ and other s-number scales. In particular, they proved that if $(X,Y)$ is a Bernstein pair with respect to $(s_n)$ and  Suppose there exists a Banach space $W$ which contains an
isometric and complementary copy of $X$ and a Banach space $V$ which contains an isomorphic
copy of $Y$, then $(W,V)$ is a Bernstein pair with respect to $(s_n)$ too \cite[Proposition 3.4]{AkLe}. As a corollary of this result, they proved that $(L_p(0,1),L_q(0,1))$ forms a Bernstein pair with respect to any s-number sequence as soon as $1<p<\infty$ and $1\leq q<\infty$. This result follows from the fact that $(\ell_2,\ell_2)$ is a Bernstein pair with respect to any sequence of s-numbers and,
for $1\leq p<\infty$, $L_p(0,1)$ contains a subspace isomorphic to $\ell_2$ and complemented in $L_p(0,1)$ for $p>1$.

Finally, condition $\sup p_n(X)=M<\infty$ seems to be not superfluous in the  case of Bernstein numbers $b_n(T)$  since Plichko has proved its necessity for this case when $T:X\to H$, $H$ being a Hilbert space  \cite[Proposition 1]{plichko}. What is more, in \cite[Theorem 1]{plichko} the author  proves that if $X$ is a Banach space which contains uniformly complemented $\ell_n^2$'s, then for every Banach space $Y$ the pair $(X, Y )$ is Bernstein with respect to the Bernstein numbers and, in particular, Bernstein numbers satisfy Shapiro's theorem in $\mathcal{L}(X,Y)$.

It is important to note that there are examples of Banach spaces satisfying  $\lim_{n\to\infty} p_n(X) $ $ =\infty$. These examples were constructed by Pisier in 1983 (see \cite{pisier}). Furthermore, if $H$ is a Hilbert space, then $p_n(H) =1$ for all $n$, which gives the opposite behaviour to Pisier's example.
\end{remark}

\section{Slow convergence of sequences of operators}

\begin{definition}  Let $(X,\rho)$ and $(Y,d)$ be two $F$- spaces. Let $T:X\to Y$ be a (possibly nonlinear) operator and $T_n:X\to Y$ be a sequence of (possibly nonlinear) operators. We say that $T_n$ converges almost arbitrarily slowly to $T$ if
\begin{itemize}
\item[$(C1)$] $\lim_{n\to\infty}d(T_nx,Tx)=0$  for all $x\in X$.
\item[$(C2)$] For every $\{\varepsilon_n\}\searrow 0$  there exists $x\in X$ such that $d(T_nx,Tx)\geq \varepsilon_n$ for infinitely many $n\in\mathbb{N}$.
\end{itemize}
We say that $T_n$ converges arbitrarily slowly to $T$ if it satisfies $(i)$ above and
\begin{itemize}
\item[$(C3)$] For every $\{\varepsilon_n\}\searrow 0$  there exists $x\in X$ such that $d(T_nx,Tx) \geq \varepsilon_n$ for all $n\in\mathbb{N}$.
\end{itemize}
If $Z\subset X$, we say that $T_n$ converges almost arbitrarily slowly to $T$ relative to $Z$ if it satisfies $(i)$ above and
\begin{itemize}
\item[$(C4)$] For every $\{\varepsilon_n\}\searrow 0$  there exists $x\in Z$ such that $d(T_nx,Tx)\geq \varepsilon_n$ for infinitely many $n\in\mathbb{N}$.
\end{itemize}
We say that $T_n$ converges arbitrarily slowly to $T$ relative to $Z$ if it satisfies $(i)$ above and
\begin{itemize}
\item[$(C5)$] For every $\{\varepsilon_n\}\searrow 0$  there exists $x\in Z$ such that $d(T_nx,Tx) \geq \varepsilon_n$ for all $n\in\mathbb{N}$.
\end{itemize}
\end{definition}

\begin{remark} \label{notaconvergencia} It is important to note that condition $(C2)$ above is equivalent to
\begin{itemize}
\item[$(C2)'$] For every $\{\varepsilon_n\}\searrow 0$  there exists $x\in X$ such that $d(T_nx,Tx)\neq \mathbf{O}(\varepsilon_n)$.
\end{itemize}
Indeed, assume that $\{\varepsilon_n\}\searrow 0$ and  $x\in X$ satisfies $d(T_nx,Tx)\neq \mathbf{O}(\varepsilon_n)$. Then there exists an strictly increasing sequence of natural numbers $\{n_k\}_{k=1}^\infty$ such that
$d(T_{n_k}x,Tx)\geq k\varepsilon_{n_k}\geq \varepsilon_{n_k}$, $k=1,2,\cdots $.  This proves $(C2)'\Rightarrow (C2)$.  The other implication follows as a consequence of the fact that, for every  sequence  $\{\varepsilon_n\}\searrow 0$ there exists another sequence $\{\epsilon_n\}\searrow 0$ such that  such that  $\lim_{n\to\infty}\frac{\epsilon_n}{\varepsilon_n}=+\infty$ .  Hence, if we assume $(C2)$ and take $x\in X$ such that  $d(T_nx,Tx)\geq \epsilon_n$ for infinitely many $n\in\mathbb{N}$, then  $d(T_nx,Tx)\neq \mathbf{O}(\varepsilon_n)$.

Analogous arguments can be used with condition $(C4)$, which is equivalent to

\begin{itemize}
\item[$(C4)'$] For every $\{\varepsilon_n\}\searrow 0$  there exists $x\in Z$ such that $d(T_nx,Tx)\neq \mathbf{O}(\varepsilon_n)$.
\end{itemize}
\end{remark}

The study of slow convergence of sequences of operators has been  recently studied  by Deutsch and Hundal \cite{DH1,DH2,DH3}. In particular, in \cite{DH1} they introduced the concepts of arbitrarily slowly (respectively,  almost arbitrarily slowly) convergence of a sequence of linear operators $T_n$ to an operator $T$, and  characterized almost arbitrarily slowly convergent sequences as those which are pointwise convergent but not norm convergent.  This result can be generalized to the $F$-spaces setting as follows:
\begin{theorem} \label{nuevo} Let $(X,\rho)$  be an $F$-space  and  $(Y,d)$ be a metric vector space with non-decreasing metric $d$.  Let $T:X\to Y$ be a continuous linear operator and $T_n:X\to Y$ be a sequence of continuous linear operators. The following are equivalent claims:
\begin{itemize}
\item[$(i)$]  The sequence $\{T_n\}$ converges pointwise to $T$ but it does not converge almost arbitrarily slowly to $T$.
\item[$(ii)$] There exists a sequence $\{\varepsilon_n\}\searrow 0$ and a positive real number $r_0>0$ such that
\[
(T_n-T)(B_{\rho}(0,r_0))\subseteq B_d(0,\varepsilon_n), \ \ n=0,1,2,\cdots.
\]
\end{itemize}
In particular, if $(i)$ holds true,  $\{T_n\}$ converges to $T$ in the topology of bounded convergence. Finally, if $X$ is locally bounded, $(i)$ and $(ii)$ are equivalent to
\begin{itemize}
\item[$(iii)$] $\{T_n\}$ converges to $T$ in the topology of bounded convergence.
\end{itemize}
\end{theorem}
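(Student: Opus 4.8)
The plan is to adapt the Baire-category argument of Theorem~\ref{ST_fail} to the continuous error functions $e_n(x):=d(T_nx,Tx)=d((T_n-T)x,0)$, the key simplification being that linearity makes the ``jump function'' trivial, so one may work with $K(n)=n$. By Remark~\ref{notaconvergencia}, condition $(C2)$ is equivalent to $(C2)'$; hence, once pointwise convergence is granted, the failure of almost arbitrarily slow convergence means precisely that there is a sequence $\{\varepsilon_n\}\searrow 0$ with $e_n(x)=\mathbf{O}(\varepsilon_n)$ for every $x\in X$, i.e. for each $x$ a constant $C(x)>0$ with $e_n(x)\le C(x)\varepsilon_n$ for all $n$. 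So I would first reformulate $(i)$ as ``$T_n\to T$ pointwise and such a sequence $\{\varepsilon_n\}$ exists'', then prove $(i)\Leftrightarrow(ii)$, and afterwards deduce the bounded-convergence statements.

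The implication $(ii)\Rightarrow(i)$ is the routine scaling direction. Given $\{\varepsilon_n\}\searrow 0$ and $r_0>0$ with $(T_n-T)(B_\rho(0,r_0))\subseteq B_d(0,\varepsilon_n)$, any $x\in X$ can be written $x=\lambda y$ with $y\in B_\rho(0,r_0)$ since balls are absorbing; as $d$ is non-decreasing, $d(\lambda u,0)\le([\lambda]+1)d(u,0)$, so $e_n(x)=d(\lambda(T_n-T)y,0)\le([\lambda]+1)\varepsilon_n$, yielding both pointwise convergence and $e_n(x)=\mathbf{O}(\varepsilon_n)$, hence $(i)$. For $(i)\Rightarrow(ii)$ I would set $\Gamma_\alpha=\{x\in X:e_n(x)\le\alpha\varepsilon_n\text{ for all }n\}$. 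Each $\Gamma_\alpha$ is closed, because every $e_n$ is continuous (a composition of the continuous maps $T_n-T$ and $d(\cdot,0)$), and $\Gamma_\alpha=-\Gamma_\alpha$ since $d(-z,0)=d(z,0)$. As $X=\bigcup_{m\in\N}\Gamma_m$ and $X$ is complete, Baire's theorem gives $m_0$ with $B_\rho(x_0,r_0)\subseteq\Gamma_{m_0}$, whence also $B_\rho(-x_0,r_0)\subseteq\Gamma_{m_0}$. For $x,y\in\Gamma_{m_0}$, linearity gives $(T_n-T)\tfrac{x+y}{2}=\tfrac12((T_n-T)x+(T_n-T)y)$, and the triangle inequality with the non-decreasing property yields $e_n(\tfrac{x+y}{2})\le d(\tfrac12(T_n-T)x,0)+d(\tfrac12(T_n-T)y,0)\le e_n(x)+e_n(y)\le 2m_0\varepsilon_n$, so $\tfrac{x+y}{2}\in\Gamma_{2m_0}$. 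Writing any $z\in B_\rho(0,r_0)$ as $\tfrac12((x_0+z)+(-x_0+z))$ and using translation invariance shows $B_\rho(0,r_0)\subseteq\Gamma_{2m_0}$, i.e. $(T_n-T)(B_\rho(0,r_0))\subseteq B_d(0,2m_0\varepsilon_n)$ with $2m_0\varepsilon_n\searrow 0$; this is $(ii)$.

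For the ``in particular'' claim, assume $(ii)$ and let $A\subseteq X$ be bounded; then $A\subseteq\lambda B_\rho(0,r_0)$ for some $\lambda>0$, and the same scaling estimate gives $\sup_{x\in A}d(T_nx,Tx)\le([\lambda]+1)\varepsilon_n\to0$, which is exactly convergence in the topology of bounded convergence. Finally, when $X$ is locally bounded I would prove $(iii)\Rightarrow(ii)$: a bounded $0$-neighborhood contains a ball $B_\rho(0,r_0)$, which is then itself bounded, so $(iii)$ forces $\varepsilon_n:=\sup_{x\in B_\rho(0,r_0)}d(T_nx,Tx)\to0$; replacing $\varepsilon_n$ by $\widetilde\varepsilon_n:=\sup_{k\ge n}\varepsilon_k$ makes the sequence non-increasing while preserving $\widetilde\varepsilon_n\to0$, and then $(T_n-T)(B_\rho(0,r_0))\subseteq B_d(0,\widetilde\varepsilon_n)$, which is $(ii)$. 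Together with $(i)\Leftrightarrow(ii)$ and $(i)\Rightarrow(iii)$, this closes the equivalences.

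The only genuinely delicate step is $(i)\Rightarrow(ii)$: one must verify that the $\Gamma_\alpha$ are closed so that Baire applies, and that the midpoint estimate survives in the $F$-space setting, where the lack of homogeneity is compensated by the non-decreasing hypothesis through $d(\tfrac12 u,0)\le d(u,0)$ and $d(\lambda u,0)\le([\lambda]+1)d(u,0)$. Everything else reduces to scaling, and all the absorbing and midpoint manipulations depend on the translation invariance of $\rho$, which holds by the standing convention of the paper.
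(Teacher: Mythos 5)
Your proof is correct and follows essentially the same route as the paper: the same Baire-category argument on the sets $\Gamma_\alpha$ (the paper's $\Delta_m$), with the same closedness, symmetry and midpoint estimates, for $(i)\Rightarrow(ii)$, and the same absorbing-ball scaling via $d(\lambda u,0)\le([\lambda]+1)d(u,0)$ for $(ii)\Rightarrow(i)$. The only difference is cosmetic: you spell out the bounded-convergence claims and the $(iii)\Rightarrow(ii)$ step that the paper dispatches as an easy consequence of the equivalence and the boundedness of balls in locally bounded spaces.
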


\begin{proof} $(i)\Rightarrow (ii)$. Assume $(i)$. It follows from Remark \ref{notaconvergencia} that there exists $\{\varepsilon_n\}\searrow 0$ such that, for each $x\in X$  exists $C(x)>0$ satisfying $d(T_nx,Tx) \leq C(x) \varepsilon_n$ for all $n\in\mathbb{N}$, since $T_n$ does not converge almost arbitrarily slowly to $T$. In particular, $X=\bigcup_{m\in\mathbb{N}}\Delta_m$, where
\[\Delta_m=\{x\in X: d(T_nx,Tx)\leq m\varepsilon_n\text{ for all } n\in\mathbb{N}\}.\]
Obviously, $\Delta_m$ is a closed subset of $X$ since $T,T_n$ are continuous. Furthermore, $\Delta_m=-\Delta_m$ and, if $x,y\in\Delta_m$ then
\begin{eqnarray*}
d(T_n(\frac{x+y}{2}),T(\frac{x+y}{2}) &=& d(\frac{1}{2}(T_n-T)(x+y),0)\leq d((T_n-T)(x+y),0)\\
&\leq&  d((T_n-T)(x),0)+d((T_n-T)(y),0)\leq 2m\varepsilon_n, \ \ (n\in\mathbb{N}),
\end{eqnarray*}
since $d$ is non-decreasing. This implies that
\begin{equation}\label{delta}
\frac{1}{2}(\Delta_m+\Delta_m)\subseteq \Delta_{2m}.
\end{equation}
Baire's category theorem implies that $B_{\rho}(x_0,r_0)\subseteq \Delta_{m_0}$ for certain $m_0\in\mathbb{N}$, $x_0\in X$ and $r_0>0$.  Furthermore,   $B_{\rho}(-x_0,r_0)\subseteq \Delta_{m_0}$, since $\Delta_{m_0}=-\Delta_{m_0}$ and the inclusion (\ref{delta}) shows that
\[
B_{\rho}(0,r_0)\subseteq \frac{1}{2}(B_{\rho}(x_0,r_0)+B_{\rho}(-x_0,r_0))\subseteq \Delta_{2m_0}.
\]
This proves $(i)\Rightarrow (ii)$.

$(ii)\Rightarrow (i)$. Assume $(ii)$ and let $x\in X$. There exists $\lambda >0$ such that $x=\lambda y$ with $y\in B_{\rho}(0.r_0)$, since these balls are absorbing sets. Hence
\begin{eqnarray*}
d(T_nx,Tx)&=&d((T_n-T)x,0)=d(\lambda(T_n-T)y,0)\\
&\leq& ([\lambda]+1)d((T_n-T)y,0)\leq  ([\lambda]+1)\varepsilon_n\  \ (n\in\mathbb{N}),\\
\end{eqnarray*}
which proves $(ii)\Rightarrow (i)$.
The last part of this theorem follows easily from the equivalence $(i)\Leftrightarrow (ii)$ above, in conjunction with the definition of the topology of bounded convergence on the space of linear continuous operators $B(X,Y)$ and the fact that balls are bounded sets in locally bounded metric spaces.
\end{proof}
Theorem \ref{nuevo} has the following nice consequence:

\begin{corollary} Let $(X,\rho)$  be a locally bounded $F$-space  and  $(Y,d)$ be a metric vector space with non decreasing metric $d$.  Let $T:X\to Y$ be a continuous linear operator and $T_n:X\to Y$ be a sequence of continuous linear operators. The following are equivalent claims:
\begin{itemize}
\item[$(i)$]  The sequence $\{T_n\}$ converges almost arbitrarily slowly to $T$.
\item[$(ii)$] $\{T_n\}$ converges pointwise to $T$ but  it does not converge to $T$ in the topology of bounded convergence.
\end{itemize}
\end{corollary}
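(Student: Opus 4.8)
The plan is to derive this corollary directly from Theorem \ref{nuevo}, specifically from its final assertion that, when $X$ is locally bounded, conditions $(i)$ and $(iii)$ of that theorem are equivalent. The entire argument is a matter of unwinding the definitions and tracking negations, since both statements of the corollary already presuppose pointwise convergence; there is no genuine analytic content beyond what Theorem \ref{nuevo} supplies.

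First I would observe that each of the two conditions in the corollary entails that $\{T_n\}$ converges pointwise to $T$: condition $(i)$ because almost arbitrarily slow convergence includes $(C1)$ by definition, and condition $(ii)$ because pointwise convergence is stated explicitly (and, in any case, convergence in the topology of bounded convergence would force pointwise convergence, as singletons are bounded sets). Hence, if $\{T_n\}$ does not converge pointwise to $T$, both conditions fail and the equivalence holds vacuously. I would therefore reduce to the case where $\{T_n\}\to T$ pointwise and work under that standing assumption.

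Under pointwise convergence I would rewrite each side. Corollary-$(i)$ becomes simply condition $(C2)$, since $(C1)$ is already in force; corollary-$(ii)$ becomes the statement that $\{T_n\}$ does not converge to $T$ in the topology of bounded convergence. Next I would invoke Theorem \ref{nuevo}: with $X$ locally bounded, its clauses $(i)$ and $(iii)$ are equivalent, that is, ``$\{T_n\}$ converges pointwise to $T$ but does not converge almost arbitrarily slowly'' is equivalent to ``$\{T_n\}$ converges in the topology of bounded convergence.'' Under the standing pointwise-convergence hypothesis, the phrase ``does not converge almost arbitrarily slowly'' collapses to ``fails $(C2)$,'' so in this situation Theorem \ref{nuevo} reads as the single equivalence: $\{T_n\}$ fails $(C2)$ if and only if $\{T_n\}$ converges in the topology of bounded convergence.

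Finally I would negate both sides of this last equivalence, still under pointwise convergence, to obtain: $(C2)$ holds if and only if $\{T_n\}$ does not converge in the topology of bounded convergence. The left-hand side is precisely corollary-$(i)$ and the right-hand side is precisely corollary-$(ii)$, which completes the argument. The only points requiring care are bookkeeping ones: tracking the nested negations (``not almost arbitrarily slow'' versus ``fails $(C2)$''), and recording the small observation that convergence in the topology of bounded convergence automatically yields pointwise convergence, so that clause $(iii)$ of Theorem \ref{nuevo} is consistent with the pointwise hypothesis built into clause $(i)$.
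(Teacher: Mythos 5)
Your argument is correct and is exactly the route the paper intends: the corollary is stated there without proof as an immediate consequence of the equivalence $(i)\Leftrightarrow(iii)$ in Theorem \ref{nuevo}, and your careful unwinding of the negations (reducing to the standing hypothesis of pointwise convergence, under which ``not almost arbitrarily slowly'' collapses to the failure of $(C2)$) supplies precisely the bookkeeping the authors left implicit.
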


In \cite{DH1} the authors proved that some classical families of operators are almost arbitrarily slowly convergent to the identity and they stated (without proof) that Bernstein's operators are in fact arbitrarily slowly convergent to the identity. They conjectured that this property should also hold true for other classical operators such as F\'{e}jer's or Landau's. They solved  their conjecture in the positive in \cite{DH3}  by using an appropriate modification of Tjuriemskih's lethargy theorem \cite{tjuriemskih1}. These results have been our main motivation in demonstrating Theorem \ref{conv_oper} below, which transports the ideas of  \cite{DH3} to a much more general context.

 \begin{theorem} \label{conv_oper} Let $(X,\rho)$ and $(Y,d)$ be two $F$- spaces.  Let $\{A_n\}$ be an approximation scheme on $Y$ and let us assume that $\{A_n\}$ satisfies Shapiro's theorem on $Y$. Let $T:X\to Y$ be an operator and and $T_n:X\to Y$ be a sequence of operators such that
\begin{itemize}
\item[$(i)$] $\lim_{n\to\infty}d(T_nx,Tx)=0$  for all $x\in X$.
\item[$(ii)$] $T_n(X)\subseteq A_n$ for all $n\in\mathbb{N}$.
\end{itemize}
Then:
\begin{itemize}
\item[$(a)$] If $T(X)=Y$ then $T_n\to T$ almost arbitrarily slowly. Furthermore, if  $T(X)=Y$ and $Y$ is a Banach space, then $T_n\to T$ arbitrarily slowly.
\item[$(b)$] If $Z$ is a subspace of $Y$ such that $\{A_n\}$ satisfies Shapiro's theorem on $Z$ and $Z\subseteq T(X)$, then  $T_n\to T$ almost arbitrarily slowly. Moreover, if we add the hypothesis that $X=Y$ and $T=I$, then  $T_n\to I$ almost arbitrarily slowly relative to $Z$.
\item[$(c)$] Assume that  $X$, $Y$ are quasi-Banach,  $A_n$ is a vector space  and $\dim A_n<\infty$ for all $n\in\mathbb{N}$, and there exists $Z$, a closed subspace of $Y$, such that  $Z\subseteq T(X)$ and $\dim Z=\infty$. Then  $T_n\to T$ arbitrarily slowly. Moreover, if we add the hypothesis that $X=Y$ and $T=I$, then  $T_n\to I$ arbitrarily slowly relative to $Z$.
\end{itemize}
\end{theorem}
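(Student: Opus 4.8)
The plan is to reduce every one of the three assertions to a single inequality linking the convergence rate of $\{T_n\}$ to the approximation errors of the scheme $\{A_n\}$. Since $T_nx\in T_n(X)\subseteq A_n$ by hypothesis $(ii)$, for every $x\in X$ and every $n$ we have
\[
d(T_nx,Tx)=d(Tx,T_nx)\geq \inf_{a\in A_n}d(Tx,a)=E(Tx,A_n).
\]
Thus, for a given $\{\varepsilon_n\}\searrow 0$, it suffices to produce an element $y$ of the relevant range with $E(y,A_n)$ large in the appropriate sense, and then pull it back through $T$. Note that this reduction is insensitive to linearity of $T$ and $T_n$, as it uses only $T_nx\in A_n$. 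Pointwise convergence $(C1)$ is exactly hypothesis $(i)$ in all cases, so only the lethargy conditions need to be verified.

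For part $(a)$ I would argue as follows. Given $\{\varepsilon_n\}\searrow 0$, the hypothesis that $\{A_n\}$ satisfies Shapiro's theorem on $Y$ produces $y\in Y$ with $E(y,A_n)\neq\mathbf{O}(\varepsilon_n)$; since $T(X)=Y$, choose $x$ with $Tx=y$, so that $d(T_nx,Tx)\geq E(y,A_n)\neq\mathbf{O}(\varepsilon_n)$. By the equivalence $(C2)\Leftrightarrow (C2)'$ of Remark \ref{notaconvergencia}, this yields almost arbitrarily slow convergence. When $Y$ is Banach I would instead invoke condition $(f)$ of Theorem \ref{teo_introd}, which upgrades Shapiro's theorem to the Tjuriemskih form: there is $y\in Y$ with $E(y,A_n)\geq\varepsilon_n$ for all $n$. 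Lifting through $T$ gives $d(T_nx,Tx)\geq\varepsilon_n$ for all $n$, i.e. condition $(C3)$, hence arbitrarily slow convergence. Part $(b)$ is the same argument with $Y$ replaced by the subspace $Z$: Shapiro's theorem on $Z$ supplies $y\in Z\subseteq T(X)$ with $E(y,A_n)\neq\mathbf{O}(\varepsilon_n)$, and in the special case $X=Y$, $T=I$ the lift is trivial ($x=y\in Z$), which places the witness inside $Z$ and therefore gives $(C4)$ via Remark \ref{notaconvergencia}.

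Part $(c)$ is where the real work lies, and I expect it to be the main obstacle. Here we need the strong, ``for all $n$'' lower bound $E(y,A_n)\geq\varepsilon_n$ with the witness $y$ forced to lie in the prescribed infinite dimensional closed subspace $Z\subseteq T(X)$, and now only in the quasi-Banach category rather than the Banach one. The hypothesis $\dim A_n<\infty$ is crucial: it makes each $A_n$ boundedly compact, which is precisely what a Bernstein--Tjuriemskih type compactness argument needs in order to locate, inside $Z$, an element whose errors $E(\cdot,A_n)$ dominate a prescribed null sequence term by term. I would therefore establish the quasi-Banach analogue of Theorem \ref{teo_introd4} restricted to $Z$, obtaining $y\in Z$ with $E(y,A_n)\geq\varepsilon_n$ for all $n$; lifting through $T$ then yields $(C3)$, and when $X=Y$, $T=I$ the trivial lift $x=y\in Z$ yields $(C5)$. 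The delicate point is to carry the compactness and term-by-term domination argument through using only the quasi-triangle inequality and the bounded compactness of the $A_n$, since the Banach-space proof of Theorem \ref{teo_introd4} relies on the ordinary triangle inequality, which is no longer available.
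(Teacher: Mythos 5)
Your proposal follows essentially the same route as the paper: everything is reduced to the inequality $d(T_nx,Tx)\geq E(Tx,A_n)$, valid because $T_nx\in A_n$, after which a witness $y$ with slowly decaying errors is produced by Shapiro's theorem on $Y$ (part $(a)$), on $Z$ (part $(b)$), or by the Bernstein--Tjuriemskih type result of Theorem \ref{teo_introd4} (part $(c)$), and lifted through $T$. Your treatment of $(a)$ and $(b)$ --- including the appeal to Theorem \ref{teo_introd}$(f)$ for the Banach upgrade and to Remark \ref{notaconvergencia} to pass between the $\neq\mathbf{O}(\varepsilon_n)$ and the infinitely-often formulations --- coincides with the paper's argument.

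The only divergence is in part $(c)$. The paper disposes of it in one line by citing Theorem \ref{teo_introd4} (Theorem 4.3 of \cite{almira_oikhberg_2}); you correctly observe that, as stated in this paper, that theorem assumes the ambient space is Banach, whereas part $(c)$ is claimed for quasi-Banach $X,Y$, and you propose to prove a quasi-Banach analogue restricted to $Z$. The observation is sound and the proposed strategy (bounded compactness of the finite-dimensional $A_n$ plus a term-by-term domination argument) is the right one, but as written your part $(c)$ remains a promissory note: the quasi-Banach analogue is announced, not proved. Since the paper itself offers nothing beyond the citation, this is not a defect relative to the paper's own proof, but be aware that your argument for $(c)$ is complete only if you either supply that analogue or restrict to the Banach case actually covered by the quoted theorem.
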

\begin{proof}  $(a)$. Let $\{\varepsilon_n\}$ be a non-increasing sequence converging to $0$. We know that  $E(y,A_n)\neq \mathbf{O}(\varepsilon_n)$ for a certain $y\in Y$, since $\{A_n\}$ satisfies Shapiro's theorem on $Y$. Now, $T(X)=Y$ implies that $y=Tx$ for a certain $x\in X$. This leads to $$d(T_nx,Tx)\neq \mathbf{O}(\varepsilon_n)$$ (which is what we wanted to prove), since $T(X)\subseteq A_n$ implies that  $d(T_nx,Tx)\geq E(Tx,A_n)=E(y,A_n)$. The second part of $(a)$ follows directly from part $(f)$ in Theorem \ref{teo_introd} (i.e., Corollary 3.7 in \cite{almira_oikhberg}).

$(b)$ The arguments are the same as in $(a)$, but now we use that $\{A_n\}$ satisfies Shapiro's theorem on $Z$ and $Z\subseteq T(X)$. Thus, the element $y$ can be chosen from $Z$ and it is still of the form $y=Tx$ for a certain $x\in X$. This leads to almost arbitrarily slowly convergence of $T_n$ to the operator $T$.

$(c)$ Use Theorem \ref{teo_introd4} (i.e., Theorem 4.3 from \cite{almira_oikhberg_2}).
\end{proof}

Theorem \ref{conv_oper}  includes some interesting cases not considered in \cite{DH3}. We include a few of them here for the sake of completeness:
\begin{itemize}
\item Greedy approximation with respect to a complete minimal system in the Banach setting produces sequences of nonlinear operators $T_n$ which are arbitrarily slowly convergent to the identity operator (see \cite[Theorem 6.2]{almira_oikhberg}).
\item Greedy approximation with respect to an unconditional basis $(\phi_n)_{n=0}^{\infty}$ of a separable Banach space $X$ produces sequences of nonlinear operators $T_n$ which are almost arbitrarily slowly convergent to the identity operator relative to any infinite dimensional closed subspace $Z$ of $X$ (use Theorem \ref{teo_introd5} (i.e.,  \cite[Theorem 7.7]{almira_oikhberg_2})).
\item If $T_n:X\to Y$ is a sequence of linear operators, $T_n\to T$ pointwise, $T(X)$ contains a finite codimensional subspace of $Y$, and $\overline{T_n(X)}\neq Y$ for all $n$, then $T_n\to T$ almost arbitrarily slowly (use Theorem \ref{teo_introd2} (i.e, Theorem 2.9 from \cite{almira_oikhberg_2}) and part $(b)$ of Theorem \ref{conv_oper}).
\item Consider the examples given in \cite{DH3} (Bernstein's, Fejer's, etc.). In all these cases we can prove that the sequence of operators is arbitrarily slowly convergent to the identity operator relative to  any infinite dimensional closed subspace $Z$ of $C[a,b]$.  (To prove this, just take into account part $(c)$ of Theorem \ref{conv_oper}).
\end{itemize}

The operators $T$ to which Theorem  \ref{conv_oper} is applicable have large range. In fact, they satisfy $T(X)=Y$ (the whole image space) or $T(X)$ contains a finite-codimensional subspace of $Y$, or it contains an infinite dimensional closed subspace of $Y$. (This holds true if $\dim T(X)=\infty$ and $T$ has closed range. These operators are well known in functional analysis). This is obviously a serious restriction on our theory. For example, if the inclusion of the infinite-dimensional closed subspace $Z\subset T(X)$ is continuous, then $T$ will be not compact. An interesting open question is to search for some kind of description of the class of compact operators $T$ such that $T_n$ converges almost arbitrarily slowly to  $T$ if and only if it converges  arbitrarily slowly to $T$. What is more, perhaps this question makes sense for  strictly singular, finitely strictly singular, or other important classes of operators.

\bigskip

\footnotesize{A. G. Aksoy

Department of Mathematics. Claremont McKenna College.

Claremont, CA, 91711, USA.

email:  aaksoy@cmc.edu}

\bigskip

\footnotesize{J. M. Almira

Departamento de Matem\'{a}ticas. Universidad de Ja\'{e}n.

E.P.S. Linares,  C/Alfonso X el Sabio, 28

23700 Linares (Ja\'{e}n) Spain

email: jmalmira@ujaen.es}

\bigskip


\begin{thebibliography}{13}



\bibitem{Ak0} \textbf{A. Aksoy,}   \textit{ Approximation Schemes, Related s-Numbers and Applications, } Ph. D. Thesis, University of Michigan, 1984.

\bibitem{Ak1} \textbf{A. Aksoy,}  Intermediate Spaces, Turkish Journal of Mathematics, \textbf{13} (3) (1989) 79-90. 

\bibitem{Ak2} \textbf{A. Aksoy,}  $\mathbf{Q}$-compact Sets and $\mathbf{Q}$-compact Maps, Mathematica Japonica, \textbf{36} (1) (1991), 1-7. 

\bibitem{Ak3} \textbf{A. Aksoy, } A Generalization of $n$-widths, in \textit{Approximation Theory, Spline Functions and Applications, } Kluwer Academic Publishers, NATO-ASI Series,  1992,  269-278.

\bibitem{AkLe} \textbf{A. G. Aksoy and G. Lewicki,} Diagonal Operators, s-Numbers and Bernstein Pairs, Note Mat. \textbf{17} (1999) 209-216.

\bibitem{AkNa} \textbf{A. Aksoy and M. Nakamura,}  The Approximation Numbers $a_n(T)$ and $\mathbf{Q}$-precompactness, Mathematica Japonica, \textbf{31} (6) (1986) 827-840.

\bibitem{albinus} \textbf{G. Albinus, } Eine bemerkung zur approximationstheorie in metrisierbaren topologischen vectorr\"{a}umen, Rev. Roum. Math. Pures et Appl. \textbf{14} (10) (1969) 1383-1894.

\bibitem{almirabjma} \textbf{J. M. Almira, } On strict inclusion relations between approximation and interpolation spaces, Banach J. Math. Anal. \textbf{5} (2) (2011) 93-105


\bibitem{almiraluther2} \textbf{J.M. Almira and U. Luther, }  Generalized approximation spaces and applications, Math. Nachr. \textbf{263-264} (2004).

\bibitem{almiraluther1} \textbf{J.M. Almira and U. Luther, } Compactness and generalized approximation spaces, Numer. Funct. Anal. Optim. \textbf{23} (2002) 1--38.

\bibitem{almira_oikhberg} \textbf{J. M. Almira, T. Oikhberg, } Approximation schemes satisfying Shapiro's theorem, J. Approx. Theory \textbf{164} (2012) 534-571.

\bibitem{almira_oikhberg_2} \textbf{J. M. Almira, T. Oikhberg, } Shapiro's theorem for subspaces, J. Math. Anal. Appl. \textbf{388} (2012) 282-302.

\bibitem{aokiresult} \textbf{T. Aoki, } Locally bounded linear topological spaces, Proc. Imp. Acad. Tokyo \textbf{18} (10) (1942) 588-594.

\bibitem{bernsteininverso}  \textbf{S. N. Bernstein, }Sur le probleme
inverse de la th\'{e}orie de la meilleure approximation des functions continues.
Comtes Rendus, \textbf{206} (1938) 1520-1523.(See also: Ob obratnoi zadache
teorii nailuchshego priblizheniya nepreryvnykh funksii, Sochineniya Vol II (1938)

\bibitem{borodin} \textbf{P. A. Borodin, } On the existence of an element with given deviations from an expanding system of subspaces, Mathematical Notes, 80 (5) (2006) 621-630   (Translated from Matematicheskie Zameti 80 (5) (2006) 657-667).


\bibitem{brukru} \textbf{Y. Brudnyi, N. Kruglyak, } On a family of approximation spaces,  In:
\textit{Investigations in function theory of several real variables} Yaroslavl' State Univ., Yaroslavl' (1978),15-42.


\bibitem{butzer_scherer} \textbf{P.L. Butzer and K. Scherer, } \textit{Approximationsprozesse und Interpolationsmethoden}, Bibliographisches Institut Mannheim, Mannheim, 1968.

\bibitem{cobos} \textbf{F. Cobos, }  On the Lorentz-Marcinkiewicz operator ideals, Math. Nachr.  \textbf{126} (1986) 281-300.

\bibitem{cobos_milman} \textbf{F. Cobos and M. Milman, } On a limit class of approximation spaces, Numer. Funct. Anal. Optim. \textbf{11} 1-2 (1990) 11-31

\bibitem{cobos_resina} \textbf{F. Cobos and I. Resina,} Representation theorems for some operator ideals, J. London Math. Soc. \textbf{39} (1989) 324-334.


\bibitem{feher_g} \textbf{F. Feher and G. Gr\"{a}ssler, } On an extremal scale of approximation spaces,
J. Comp. Anal. Appl. \textbf{3} (2) (2001), 95-108.

\bibitem{DH1} \textbf{F. Deutsch, H. Hundal, } Slow convergence of sequences of linear operators I: Almost arbitrarily slow convergence, J. Approx. Theory \textbf{162} (2010) 1701-1716.

\bibitem{DH2} \textbf{F. Deutsch, H. Hundal, } Slow convergence of sequences of linear operators II: Arbitrarily slow convergence, J. Approx. Theory \textbf{162} (2010) 1717-1738.


\bibitem{DH3} \textbf{F. Deutsch, H. Hundal, } A Generalization of Tyuriemskih's Lethargy Theorem and Some Applications,  Numer. Functional Anal. and Optimization, \textbf{34} (9) (2013) 1033-1040.


\bibitem{devorenonlinear}  \textbf{R. A. DeVore,  } Nonlinear approximation, Acta Numer. \textbf{7} (1998) 51-150.

\bibitem{devore} \textbf{R. A. DeVore, G. G. Lorentz, } \textit{Constructive approximation, } Springer, 1993.







\bibitem{HMR1} \textbf{C.V. Hutton, J.S. Morrell, and J.R. Retherford,}  Approximation Numbers and Kolmogoroff Diameters of Bounded Linear Operators,  Bull. Amer. Math. Soc. \textbf{80} (1974) 462-466.
\bibitem{HMR2} \textbf{C.V. Hutton, J.S. Morrell, and J.R. Retherford, } Diagonal Operators, Approximation Numbers and Kolmogoroff Diameters, J. Approx. Theory \textbf{16} (1976) 48-80.

\bibitem{kakutani} \textbf{S. Kakutani, } \"{U}ber die Metrisation der topologischen Gruppen, Proc. Imp. Acad. Tokyo, \textbf{12} (1936) 82-84.

\bibitem{kalton} \textbf{N. J. Kalton, N. T. Peck, J. W. Roberts, }
\textit{An $F$-space sampler, } London Math. Soc. Lecture Note Series \textbf{89},
Cambridge University Press, 1984.

\bibitem{klee} \textbf{V. Klee, } Invariant metrics in groups, Proc. Amer. Math. Soc. Math Scand. \textbf{3} (1952) 484-487.

\bibitem{kolmorogov} \textbf{A. N. Kolmogorov,  } Zur Normierbarkeit eines allgemeinen topologischen Raumes, Studio Matt. \textbf{5} (1935) 29-33.







\bibitem{Oik} \textbf{T. Oikhberg, } Rate of Decay of s-Numbers,  J. Approx. Theory \textbf{163} (2011) 311-327.


\bibitem{peetre_sparr} \textbf{J. Peetre and G. Sparr,}  Interpolation of normed abelian groups, Annali di Matematica Pura ed Applicata \textbf{12} (1972) 216-262.


\bibitem{Pie} \textbf{A.~Pietsch, } Approximation spaces,
Journal of Approximation Theory \textbf{32} (1981) 115--134.
\bibitem{PieID} \textbf{A.~Pietsch,}
\textit{Operator ideals},
North-Holland, Amsterdam, 1980.

\bibitem{Pie2008} \textbf{A. Pietsch,} Bad Properties of the Bernstein Numbers,  Studia Math. \textbf{184} (2008) 263-269.

\bibitem{plichko} \textbf{A. Plichko, } Rate of Decay of the Bernstein Numbers, Journal of Mathematical Physics, Analysis, Geometry. \textbf{9} (1) (2013)  59-72.

\bibitem{pisier} \textbf{G. Pisier, } Counterexamples to a Conjecture of Grothendieck, Acta Math. \textbf{151} (1983) 181-208.


\bibitem{pustylnik} \textbf{E. Pustylnik, } Ultrasymmetric sequence spaces in approximation theory, Collectanea Mathematica, \textbf{57} (3)  (2006) 257-277.

\bibitem{pustylnik2} \textbf{E. Pustylnik,}  A new class of approximation spaces,  Rend. Circ. Mat.
Palermo, Ser. II, Suppl., v. 76 (2005), 517-532.


\bibitem{rolewicz} \textbf{S. Rolewicz, } \textit{Metric linear spaces, 2th Ed.} Mathematics and its applications, East European Series, Kluwer Acad. Publ., 1985.

\bibitem{rolewiczresult} \textbf{S. Rolewicz, } On a certain class of linear metric spaces, Bull. Acad. Pol. Sci. \textbf{5} (1957) 471-473.

\bibitem{rudin} \textbf{W. Rudin, } \textit{Functonal Analysis (2¼ Edition),} Mc-Graw Hill, Inc., 1991.

\bibitem{shapiro}  \textbf{H. S. Shapiro, }Some negative theorems of
Approximation Theory, Michigan Math. J. \textbf{11} (1964) 211--217.



\bibitem{singerlibro} \textbf{I. Singer, } \textit{Best approximation in normed linear
spaces by elements of linear subspaces}, Springer Verlag, New York, 1970.




\bibitem{tita_equiv_quasinorm}  \textbf{N. Ti\c{t}a,}  Equivalent quasi-norms on some operator ideals, Annal. Univ. Craiova \textbf{28} (2001) 16-23.

\bibitem{tita_cluj_99} \textbf{N. Ti\c{t}a,} On a limit class of Lorentz-Zygmund ideals, Analysis, Functional Equations, Approximation and Convexity, 302--306, Cluj Napoca, 1999.


\bibitem{tita_anal} \textbf{N. Ti\c{t}a, } A generalization of the limit class of approximation spaces, Annal. Univ. Iasi \textbf{43} (1997) 133-138.

\bibitem{tita_studia} \textbf{N. Ti\c{t}a,}  Approximation spaces and bilinear operators, Studia Univ. Babes-Bolyai, Ser. Math. \textbf{35} (4) (1990) 89-92.

\bibitem{tita_col} \textbf{N. Ti\c{t}a,} On a class of $\ell_{\Phi,\phi}$ operators,   Collectanea Mathematica, 32 (3)  (1981) 275-279.

\bibitem{tita_col2} \textbf{N. Ti\c{t}a,} $L_{\Phi,\phi}$ operators and $(\Phi,\phi)$ spaces,   Collectanea Mathematica, 30 (1)  (1979) 3-10.

\bibitem{tjuriemskih1} \textbf{I. S. Tjuriemskih, } On a problem of S. N. Bernstein, Uchen. Zap. Kalinin.
Gos. Ped. Inst. \textbf{52} (1967) 123--129 (Russian).



\end{thebibliography}
\end{document}